\newtheorem{theorem}{Theorem}[section]
\newtheorem{lemma}[theorem]{Lemma}
\newtheorem{corollary}[theorem]{Corollary}
\newtheorem{problem}[theorem]{Problem}
\newtheorem{conjecture}[theorem]{Conjecture}
\def\det{{\rm det}}
\title{{\Large \bf On the sum of the largest $A_{\alpha}$-eigenvalues of graphs
\thanks{ Supported by the National Natural Science Foundation of China (No. 11771443, 12071411).}~}}
\author{Zhen Lin$^{a}$\thanks{Corresponding author. E-mail addresses: lnlinzhen@163.com(Z. Lin),
miaolianying@cumt.edu.cn (L. Miao), ychgsg@163.com (S.-G. Guo).}, Lianying Miao$^a$, Shu-Guang Guo$^b$\\
{\footnotesize $^a$School of Mathematics, China University of Mining and Technology,}\\ {\footnotesize  Xuzhou, 221116, Jiangsu, P.R.
China}\\
\footnotesize  $^b$School of Mathematics and Statistics, Yancheng Teachers
University,
\\  \footnotesize  Yancheng,  224002, Jiangsu, P.R. China }
\date{}
\begin{document}
\openup 1.0\jot
\date{}\maketitle
\begin{abstract}
For every real $0\leq \alpha \leq 1$, Nikiforov defined the $A_{\alpha}$-matrix of a graph $G$ as $A_{\alpha}(G)=\alpha D(G)+(1-\alpha)A(G)$, where $A(G)$ and $D(G)$ are the adjacency matrix and the degree diagonal matrix of a graph $G$, respectively. The eigenvalues of $A_{\alpha}(G)$ are called the $A_{\alpha}$-eigenvalues of $G$. Let $S_k(A_{\alpha}(G))$ be the sum of $k$ largest $A_{\alpha}$-eigenvalues of $G$. In this paper, we present several upper and lower bounds on $S_k(A_{\alpha}(G))$ and characterize the extremal graphs for certain cases, which can be regard as a common generalization of the sum of $k$ largest eigenvalues of adjacency matrix and signless Laplacian matrix of graphs. In addition, some graph operations on $S_k(A_{\alpha}(G))$ are presented.

\bigskip

\noindent {\bf Mathematics Subject Classification 2010:} 05C50

\noindent {\bf Keywords:}  $A_{\alpha}$-matrix; Sum of $A_{\alpha}$-eigenvalues; Graph operation; Bound.
\end{abstract}
\baselineskip 20pt

\section{\large Introduction}

\ \ \ \ Let $G$ be a simple undirected graph with vertex set $V(G)$ and edge set $E(G)$. Denote by $K_n$, $P_n$, $C_n$ and $K_{1,\, n-1}$ the complete graph, path, cycle and star with $n$ vertices, respectively. Let $d_v=d_G(v)$ be the degree of vertex $v$ of the graph $G$. The minimum and maximum degree of a vertex in $G$ are denoted by $\delta=\delta(G)$ and $\Delta=\Delta(G)$, respectively. For a graph $G$, the first Zagreb index $Z_1=Z_1(G)$ is defined as the sum of the squares of the vertices degrees. There is a wealth of literature relating to the first Zagreb index, see for example \cite{BDFG, FT} and the references therein.

Let $\lambda_1(M)\geq \lambda_2(M)\geq \cdots \geq \lambda_n(M)$ be the eigenvalues of the real symmetric matrix $M$. Let $S_k(M)$ be the sum of $k$ largest eigenvalues of $M$. The investigation on the sum of $k$ largest eigenvalues of a real symmetric matrix is a topic of interest in matrix theory. The following classical theorem is due to Fan \cite{F}.

\begin{theorem}{\bf (\cite{F})}\label{th1,1} %------
Let $M$ and $N$ be two real symmetric matrices of order $n$. Then
$$\sum\limits_{i=1}^{k}\lambda_i(M+N)\leq \sum\limits_{i=1}^{k}\lambda_i(M)+\sum\limits_{i=1}^{k}\lambda_i(N)$$
for any $1\leq k \leq n$.
\end{theorem}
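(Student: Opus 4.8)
The plan is to deduce the inequality from the Ky Fan variational characterization of the sum of the $k$ largest eigenvalues of a real symmetric matrix. Concretely, the first step is to prove that for any real symmetric $M$ of order $n$ and any $1\le k\le n$,
$$\sum_{i=1}^{k}\lambda_i(M)\;=\;\max\big\{\,{\rm tr}(U^{\top}MU) : U\in\mathbb{R}^{n\times k},\ U^{\top}U=I_k\,\big\},$$
the maximum being taken over all $n\times k$ matrices $U$ with orthonormal columns.

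To prove this identity, write $M=Q\Lambda Q^{\top}$ with $Q$ orthogonal and $\Lambda=\diag(\lambda_1(M),\dots,\lambda_n(M))$. For $U$ with $U^{\top}U=I_k$, set $W=Q^{\top}U$, so that $W^{\top}W=I_k$ as well; then
$${\rm tr}(U^{\top}MU)={\rm tr}(W^{\top}\Lambda W)=\sum_{i=1}^{n}\lambda_i(M)\,c_i,\qquad c_i:=\sum_{j=1}^{k}W_{ij}^{2}.$$
Since $P=WW^{\top}$ satisfies $P^{2}=P$ and has rank $k$, it is an orthogonal projection, whence $0\le c_i=P_{ii}\le 1$ and $\sum_{i=1}^{n}c_i={\rm tr}(P)=k$. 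Maximizing the linear functional $\sum_i\lambda_i(M)c_i$ over the polytope $\{c:0\le c_i\le1,\ \sum_i c_i=k\}$ is elementary: using $\lambda_1(M)\ge\cdots\ge\lambda_n(M)$ one checks that the optimum equals $\sum_{i=1}^{k}\lambda_i(M)$, attained at $c_1=\cdots=c_k=1$ and $c_{k+1}=\cdots=c_n=0$. This value is realized by choosing the columns of $U$ to be orthonormal eigenvectors associated with $\lambda_1(M),\dots,\lambda_k(M)$, which proves the displayed identity.

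Granting the characterization, the theorem follows in one line. Let $U^{*}$ attain the maximum for $M+N$, so $(U^{*})^{\top}U^{*}=I_k$ and, by linearity of the trace,
$$\sum_{i=1}^{k}\lambda_i(M+N)={\rm tr}\big((U^{*})^{\top}(M+N)U^{*}\big)={\rm tr}\big((U^{*})^{\top}MU^{*}\big)+{\rm tr}\big((U^{*})^{\top}NU^{*}\big).$$
Since $U^{*}$ is merely one admissible competitor in the maximization problems for $M$ and for $N$, the two terms on the right are at most $\sum_{i=1}^{k}\lambda_i(M)$ and $\sum_{i=1}^{k}\lambda_i(N)$ respectively; adding these bounds gives the assertion for every $1\le k\le n$.

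The only substantive point is the variational principle, and within it the main obstacle is verifying that the vector $(c_1,\dots,c_n)$ lies in the hypersimplex $\{c:0\le c_i\le1,\ \sum_i c_i=k\}$ --- this is exactly where the fact that $WW^{\top}$ is a rank-$k$ orthogonal projection is used --- followed by the observation that a linear function is maximized at a vertex of that polytope. Once the formula $\sum_{i=1}^{k}\lambda_i(M)=\max_{U^{\top}U=I_k}{\rm tr}(U^{\top}MU)$ is available, additivity of the trace does the rest. An alternative proof applies the Courant--Fischer min-max theorem together with a Weyl-type interlacing argument, but the Ky Fan route above is shorter and self-contained.
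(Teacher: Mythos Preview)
Your argument is correct: the variational characterization $\sum_{i=1}^{k}\lambda_i(M)=\max_{U^{\top}U=I_k}{\rm tr}(U^{\top}MU)$ is proved cleanly via the diagonalization and the hypersimplex bound, and the additivity of the trace then yields Fan's inequality immediately.

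There is, however, nothing to compare against in the paper: Theorem~\ref{th1,1} is stated there as a cited result from Fan's original paper and is not given a proof. It is worth noting that the variational principle you establish from scratch is exactly what the paper records separately as Lemma~\ref{le2,7} (cited from Bhatia), so in effect your proof shows how Theorem~\ref{th1,1} follows from Lemma~\ref{le2,7}, while also supplying a self-contained derivation of the latter.
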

Rojo et al. \cite{RSR} obtained some upper bounds for the sum of the $k$ largest eigenvalues of the matrix $M$ in terms of the trace of $M$. Mohar \cite{M}
showed that $S_k(M)$ is at most $\frac{1}{2}(\sqrt{k}+1)n$ when the entries of $M$ are between $0$ and $1$. Meanwhile, he gave an upper bound on the sum of the $k$ largest eigenvalues of arbitrary symmetric matrices. Nikiforov \cite{N1} obtained strengthen the upper bound and extend it to arbitrary $(0, 1)$-matrices.

Let $A(G)$ be the adjacency matrix of a graph $G$. For a graph $G$, Mohar \cite{M} showed that $S_k(A(G))$ is at most $\frac{1}{2}(\sqrt{k}+1)n$. This bound is shown to be best possible in the sense that for every $k$ there exist graphs whose sum is $\frac{1}{2}(\sqrt{k}+\frac{1}{2})n-o(k^{-2/5})n$. Das et al. \cite{DMS} proved an upper bound on $S_k(A(G))$ in terms of vertex number and negative inertia index. Moreover, Gernert \cite{G} showed that $S_2(A(G))\leq n$ if $G$ is a regular graph with $n$ vertices. He conjectured that this inequality holds for all
graphs. Gernert's conjecture was disproved by Nikiforov \cite{N2}, who gave examples of graphs with $S_2(A(G))\geq \frac{29+\sqrt{329}}{42}n-25>1.122n-25$ and proved that $S_2(A(G))\leq \frac{2}{\sqrt{3}}n<1.155n$. Ebrahimi et al. \cite{EMNA} showed that $S_2(A(G))\leq (\frac{1}{2}+\sqrt{\frac{5}{12}})n<1.145n$.

Let $Q(G)$ be the signless Laplacian matrix of a graph $G$. Ashraf et al. \cite{AOT} proposed the following conjecture on $S_{k}(Q(G))$.

\begin{conjecture}{\bf (\cite{AOT})}\label{co1,1} %------
Let $G$ be a graph with $n$ vertices and $e(G)$ edges. Then
$$S_k(Q(G))\leq e(G)+\binom{k+1}{2}$$
for $1\leq k \leq n$.
\end{conjecture}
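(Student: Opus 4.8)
The plan is to recast the conjecture as a variational statement via Ky Fan's maximum principle, settle the extreme ranges of $k$ by elementary arguments, and then try to close the middle range by an induction on $|V(G)|$ driven by Fan's Theorem~\ref{th1,1}. Throughout, write $q_{1}\ge\cdots\ge q_{n}$ for the eigenvalues of $Q(G)=D(G)+A(G)$, and recall $Q(G)=BB^{T}$ for the $n\times e(G)$ vertex--edge incidence matrix $B$, so that $Q(G)$ is positive semidefinite with $\mathrm{tr}(Q(G))=\sum_{v}d_{v}=2e(G)$. By Ky Fan's principle, $S_{k}(Q(G))=\max\{\mathrm{tr}(U^{T}Q(G)U):U\in\mathbb{R}^{n\times k},\ U^{T}U=I_{k}\}$, and writing $\mathbf{x}_{v}\in\mathbb{R}^{k}$ for the rows of $U$ this reads $S_{k}(Q(G))=\max\big\{\sum_{uv\in E(G)}\|\mathbf{x}_{u}+\mathbf{x}_{v}\|^{2}:\sum_{v}\mathbf{x}_{v}\mathbf{x}_{v}^{T}=I_{k}\big\}$; the conjecture is then precisely the inequality $\sum_{uv\in E(G)}(\|\mathbf{x}_{u}+\mathbf{x}_{v}\|^{2}-1)\le\binom{k+1}{2}$ under $\sum_{v}\|\mathbf{x}_{v}\|^{2}=k$. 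It is also worth recording the line-graph viewpoint: the nonzero eigenvalues of $Q(G)$ are exactly those of $2I+A(\mathcal{L}(G))$, where $\mathcal{L}(G)$ is the line graph, which links the problem to bounds on $S_{k}$ of $(0,1)$-matrices and, when $G$ is bipartite, gives $Q(G)\cong L(G)$, so that the conjecture for bipartite $G$ is Brouwer's conjecture for bipartite $G$.

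First I would clear the easy ranges. Since $e(G)\le\binom{n}{2}$, we have $S_{n}(Q(G))=2e(G)\le e(G)+\binom{n}{2}\le e(G)+\binom{n+1}{2}$ and $S_{n-1}(Q(G))=2e(G)-q_{n}\le 2e(G)\le e(G)+\binom{n}{2}$. For $k=1$, use $q_{1}\le\max_{uv\in E(G)}(d_{u}+d_{v})$ together with the observation that for an edge $uv$ the quantity $d_{u}+d_{v}$ equals $1$ plus the number of edges meeting $\{u,v\}$, hence $\le e(G)+1$; so $S_{1}(Q(G))\le e(G)+1$, with equality for stars. For the middle range I would fix a vertex $v$ and split
$$Q(G)=\begin{pmatrix}Q(G-v)&0\\ 0&0\end{pmatrix}+Q_{v},$$
where $Q_{v}=\sum_{u\sim v}Q_{uv}$ is the signless Laplacian of the star on the edges at $v$, whose spectrum is $\{d_{v}+1,\,1^{(d_{v}-1)},\,0^{(n-d_{v})}\}$. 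For $1\le k\le n-1$ the first summand contributes $S_{k}(Q(G-v))$ (as $Q(G-v)\succeq 0$) and $S_{k}(Q_{v})=\min\{d_{v}+k,\,2d_{v}\}$, so Fan's Theorem~\ref{th1,1} gives
$$S_{k}(Q(G))\le S_{k}(Q(G-v))+\min\{d_{v}+k,\,2d_{v}\},$$
and one hopes to induct on $n$ with base cases $k\in\{n-1,n\}$.

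The hard part is that this step is too wasteful: plugging in $S_{k}(Q(G-v))\le e(G-v)+\binom{k+1}{2}$ yields only $S_{k}(Q(G))\le e(G)+\binom{k+1}{2}+k$, an excess of $k$ per deleted vertex, which accumulates to far beyond the allowed slack $\binom{k+1}{2}$. The spurious $k$ comes from the eigenvalue-$1$ eigenspace of $Q_{v}$, and a Weyl-type inequality like Fan's cannot detect that this eigenspace is almost entirely absorbed into the top eigenspace of $Q(G-v)$. Removing the loss seems to require a genuinely finer tool: a Cauchy-interlacing comparison of $Q(G)$ with its principal submatrix $Q(G)[V\setminus\{v\}]=Q(G-v)+\mathrm{diag}(\mathbf{1}_{N(v)})$, so that the $k-1$ ``extra'' directions are paid for only once in total; or some majorization of the spectrum (in the spirit of the Grone--Merris--Bai theorem $S_{k}(L(G))\le\sum_{i\le k}d_{i}^{*}$, which however already overshoots $e(G)+\binom{k+1}{2}$ for graphs such as $2K_{k+1}$, so it must be combined with extra structure); or, for regular graphs, direct estimates using $\sum_{i}\lambda_{i}(A(G))^{2}=2e(G)$ on the reformulation $S_{k}(Q(G))=dk+S_{k}(A(G))$. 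As a model for controlling the interaction term $\sum_{uv\in E(G)}\langle\mathbf{x}_{u},\mathbf{x}_{v}\rangle$ I would first settle $k=2$ directly, bounding $q_{1}+q_{2}$ via the two largest degrees and the size of their common neighbourhood. My honest expectation is that the range $3\le k\le n-2$ for non-bipartite, irregular $G$ — where the trace bound, Fan's subadditivity, and degree-sequence majorization all individually fail — is the true content and will not yield to a short argument.
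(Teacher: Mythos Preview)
The statement you are attempting is not a theorem in this paper but a \emph{conjecture} (Conjecture~\ref{co1,1}, due to Ashraf--Omidi--Tayfeh-Rezaie). The paper does not prove it; it only quotes it, lists the known partial cases ($k\in\{1,2,n-2,n-1,n\}$, regular graphs, trees, unicyclic and bicyclic graphs, tricyclic graphs with $k\ne 3$), and later uses it as motivation for the $A_{\alpha}$-analogue in Conjecture~\ref{co1,3}. So there is no ``paper's own proof'' to compare against.

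That said, your write-up is an accurate picture of the situation. The extreme cases $k=n$, $k=n-1$, and $k=1$ are handled correctly and match the known easy arguments: the trace identity plus $e(G)\le\binom{n}{2}$, positivity of $q_n$, and the bound $q_1\le\max_{uv\in E}(d_u+d_v)\le e(G)+1$. Your Fan-splitting $Q(G)=Q(G-v)\oplus 0 + Q_v$ with $S_k(Q_v)=\min\{d_v+k,\,2d_v\}$ is computed correctly, and your diagnosis is exactly right: the induction overshoots by $k$ per deleted vertex because Fan's inequality cannot see that the $(d_v-1)$-dimensional eigenvalue-$1$ eigenspace of $Q_v$ is largely absorbed by the top eigenspace of $Q(G-v)$. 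Your final sentence---that $3\le k\le n-2$ for non-bipartite irregular graphs is the genuine content and will not yield to a short argument---is the correct conclusion; the conjecture is open in precisely that range, and none of the standard tools (trace, Fan subadditivity, Grone--Merris--Bai majorization) close it on their own. In short: you have not proved the conjecture, but you were never going to, and you have correctly identified both what is easy and why the rest is hard.
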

This conjecture has been proved to be correct for all graphs with at most ten vertices \cite{AOT}, all graphs with $k=1, 2, n-2, n-1, n$ \cite{AOT, CHJL}, regular graphs \cite{AOT}, trees \cite{HMT}, unicyclic graphs \cite{YY}, bicyclic graphs \cite{YY}, tricyclic graphs when $k\neq 3$ \cite{YY} and so on. Later,
Amaro et al. \cite{ALOLA} presented a strongly conjecture as follows.

\begin{conjecture}{\bf (\cite{ALOLA})}\label{co1,2} %------
Let $G$ be a graph with $n\geq 5$ vertices and $3\leq k \leq n-2$ edges. Then
$$S_k(Q(G))\leq S_k(Q(H_{n,\, k}))<e(G)+\binom{k+1}{2}$$
with equality if and only if $G=H_{n,\, k}$, where $H_{n,\, k}$ is the $P_3$-join graph isomorphic
to $P_3[(n-k-1)K_1,K_{k-1}, K_2]$ for $3\leq k \leq n-2$.
\end{conjecture}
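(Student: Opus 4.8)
The comparison $S_k(Q(G))\le S_k(Q(H_{n,k}))$ only makes sense once $e(G)$ is fixed, so I work throughout in the family $\mathcal{G}$ of graphs on $n$ vertices with $m:=e(H_{n,k})$ edges; note that $m=e(CS_{n,k-1})+1$, where $CS_{n,k-1}=K_{k-1}\vee\overline{K}_{n-k+1}$ is the complete split graph, and that $H_{n,k}$ is exactly $CS_{n,k-1}$ with one edge added inside its independent set. The plan is to treat separately the extremal inequality $S_k(Q(G))\le S_k(Q(H_{n,k}))$ (with uniqueness) and the strict estimate $S_k(Q(H_{n,k}))<m+\binom{k+1}{2}$, doing the latter first since it is essentially a computation.

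For the strict estimate, the three bags of $H_{n,k}=P_3[(n-k-1)K_1,K_{k-1},K_2]$ form an equitable partition, so $\mathrm{Spec}\,Q(H_{n,k})$ is the spectrum of the $3\times 3$ quotient matrix
$$ C\;=\;\begin{pmatrix} k-1 & k-1 & 0\\ n-k-1 & n+k-3 & 2\\ 0 & k-1 & k+1 \end{pmatrix} $$
together with the internal eigenvalues $n-2$ (multiplicity $k-2$, from the $K_{k-1}$ bag) and $k-1$ (multiplicity $n-k-1$, from the other two bags). Since $\mathrm{tr}\,Q(H_{n,k})=2m$, we have $S_k(Q)=2m-\sum_{i>k}q_i$; using $k-1<n-2$ to sort the spectrum, together with the identity $m-\binom{k+1}{2}=(n-k-1)(k-1)$, the slack $m+\binom{k+1}{2}-S_k(Q(H_{n,k}))$ collapses to the least eigenvalue $\lambda_3(C)$ of the quotient (with a boundary subcase, when the $k$-th largest eigenvalue is an internal $k-1$, in which the slack equals $\lambda_2(C)+\lambda_3(C)-(k-1)=n+2k-2-\lambda_1(C)$). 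The matrix $C$ is similar to a symmetric matrix, so its eigenvalues are real, and $\det C=2k(k-1)^2>0$ forces $\lambda_3(C)>0$; the boundary subcase closes via $\lambda_1(C)=q_1(H_{n,k})\le\max_v(d_v+m_v)<n+2k-2$, where $m_v$ is the average degree of the neighbours of $v$. This settles the strict estimate.

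For the extremal inequality, let $G^{\ast}$ maximise $S_k(Q(\cdot))$ over $\mathcal{G}$, and fix by Theorem~\ref{th1,1} a rank-$k$ orthogonal projection $P$ with $S_k(Q(G^{\ast}))=\mathrm{tr}\,PQ(G^{\ast})$. Writing $Q(G)=\sum_{uv\in E(G)}Q_{uv}$ with $Q_{uv}=(e_u+e_v)(e_u+e_v)^{\top}$, the weight of a pair $uv$ is $w(uv):=\mathrm{tr}\,PQ_{uv}=P_{uu}+P_{vv}+2P_{uv}$, and for a present edge $e$ and an absent edge $e'$ the switch $G^{\ast}-e+e'$ still lies in $\mathcal{G}$, so maximality gives $w(e)\ge w(e')$: every edge of $G^{\ast}$ is at least as heavy as every non-edge. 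The programme is then: (a) choose a maximiser with few twin classes (symmetrising over twins, or over the automorphism group, does not decrease $S_k(Q(\cdot))$, by convexity); (b) use the weight inequality, together with its counterpart for an optimal projection of the switched graph, to force the twin classes to be linearly ordered by adjacency, so that $G^{\ast}$ is a $P_3$-join of at most three bags, each complete or empty; (c) run the finite optimisation over graphs $P_3[(a)K_1,K_b,K_c]$ with $a+b+c=n$ and $ab+bc+\binom{b}{2}+\binom{c}{2}=m$, using the quotient-matrix formula above to evaluate $S_k(Q(\cdot))$ and to show the unique maximiser is $(a,b,c)=(n-k-1,k-1,2)$; tracking the equalities through (a)--(c) yields the uniqueness clause.

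The genuine difficulty is step (b). A sum of $k$ eigenvalues, unlike a single one, has no first-order expansion under an edge switch because the optimal projection itself moves, so the one-sided inequalities from Fan's theorem must be combined very delicately, and one must exclude maximisers with a complicated ``interior'' before the finite search in (c) is legitimate---this is exactly why Conjecture~\ref{co1,1}, and a fortiori this strengthening, is still open in general. A possible alternative is induction on $n$, or on $n-k$, through the interlacing of $Q(G-v)$ inside $Q(G)$, anchored at the cases already verified for Conjecture~\ref{co1,1} (all $G$ with $n\le 10$; $k\in\{1,2,n-2,n-1,n\}$); but closing that induction still requires controlling how deleting a minimum-degree vertex interacts with the fixed edge count, which is the same obstruction in another guise. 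In either approach the equality analysis leans on the explicit eigenvectors of $Q(H_{n,k})$ furnished by the equitable partition, which is the reason for setting up the matrix $C$ at the outset.
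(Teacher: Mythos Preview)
The statement you are attempting is Conjecture~\ref{co1,2}: the paper records it as an \emph{open conjecture} due to Amaro et~al.\ and gives no proof whatsoever, so there is no argument in the paper to compare your proposal against. Any valid proof would be new mathematics, not a reproduction of something in the text.

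Your proposal is not a proof, and to your credit you say so explicitly. The computation for the strict part $S_k(Q(H_{n,k}))<e(H_{n,k})+\binom{k+1}{2}$ is sound: the three bags do form an equitable partition, your quotient matrix $C$ is correct, the identity $m-\binom{k+1}{2}=(n-k-1)(k-1)$ checks out, and $\det C=2k(k-1)^2>0$ indeed forces $\lambda_3(C)>0$. That piece could be written up as a self-contained lemma.

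The genuine gap is exactly where you locate it: step~(b), passing from the edge-weight inequality $w(e)\ge w(e')$ to a structural conclusion that the maximiser is a $P_3$-join of at most three bags. The weight inequality you derive from Fan's theorem is correct but weak; it constrains a \emph{single} optimal projection $P$, and once you switch an edge the optimal projection changes, so you cannot iterate without losing control. Your twin-symmetrisation idea in step~(a) is reasonable but does not by itself bound the number of twin classes, and the alternative induction on $n$ via interlacing runs into the same obstruction you describe. In short, your outline correctly isolates the hard kernel of Conjecture~\ref{co1,1} (and hence of this strengthening), but does not overcome it; the extremal inequality $S_k(Q(G))\le S_k(Q(H_{n,k}))$ remains open, and nothing in the paper closes it either.
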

Moreover, Oliveira et al. \cite{OLRC} showed that the inequality $S_2(Q(G))\leq e(G)+3$ is tighter for the graph $K_{1,\,n-1}^{+}$ among all firefly graphs,
where $K_{1,\,n-1}^{+}$ is the star graph with an additional edge. Meanwhile, they conjectured that $K_{1,\,n-1}^{+}$ minimizes $f(G)=e(G)-S_2(Q(G))$ among all graphs $G$ with $n$ vertices. Recently, Du \cite{D} proved that $S_2(Q(G))<e(G)+3-\frac{2}{n}$ when $G$ is a tree, or a unicyclic graph whose unique cycle is not a triangle. This implies that the conjecture of Oliveira et al. is true for trees and unicyclic graphs whose unique cycle is not a triangle. Oliveira and Lima \cite{OL} showed that $S_2(Q(G))\geq d_1+d_2+1$ with equality if and only if $G$ is the star $K_{1,\,n-1}$ or the complete graph $K_3$, where $d_i$ is the $i$-largest degree of a vertex of $G$.

Another motivation to study $S_k(A(G))$ and $S_k(Q(G))$ came from the energy $\varepsilon(A(G))$ and signless Laplacian energy $\varepsilon(Q(G))$ of a graph $G$, which is very popular in mathematical chemistry. Let $G$ be a graph with $n$ vertices and $m$ edges. Then
$$\varepsilon(A(G))=\sum\limits_{k=1}^{n}\left\lvert \lambda_k(A(G)) \right\rvert=2S_{\sigma}(A(G))=\max\limits_{1\leq k \leq n}\left\{2S_{k}(A(G))\right\}$$
and
$$\varepsilon(Q(G))=\sum\limits_{k=1}^{n}\left\lvert \lambda_k(Q(G))-\frac{2m}{n} \right\rvert=2S_{\sigma}(Q(G))-\frac{4\sigma m}{n}=\max\limits_{1\leq k \leq n}\left\{2S_{k}(Q(G))-\frac{4km}{n}\right\},$$
where $\sigma$ denotes the number of the eigenvalues of $M$ greater than or equal to $tr(M)/n$. Thus $S_k(A(G))$ and $S_k(Q(G))$ are close relation with the energy and signless Laplacian energy, respectively. For more details in this field, we refer the reader to \cite{ACGMR, DMS, GCP, LSG}. In addition, $S_k(A(G))$ is related to Ky Fan norms of graphs introduced by Nikiforov \cite{N1}, which is a fundamental matrix parameter anyway.

For any real $\alpha \in[0,1]$, Nikiforov \cite{N} defined the matrix $A_{\alpha}(G)$ as
$$A_{\alpha}(G)=\alpha D(G)+(1-\alpha)A(G),$$
where $D(G)$ is the diagonal matrix of its vertex degrees and $A(G)$ is the adjacency matrix. It is easy to see that $A_{0}(G)=A(G)$ and $2A_{1/2}(G)=Q(G)$. The new matrix $A_{\alpha}(G)$ not only can underpin a unified theory of $A(G)$ and $Q(G)$, but it also brings many new interesting problems, see for example \cite{LDSS, LLX, LMG, N, NR}. This matrix has recently attracted the attention of many researchers, and there are several research papers published continually, see for example [8, 9, 19, 21, 23-28, 30-32, 34, 37, 38] and the references therein.

Motivated by the above works, we study the sum of $k$ largest eigenvalues of $A_{\alpha}(G)$. Since $S_k(A_{0}(G))=S_k(A(G))$ and $2S_k(A_{1/2}(G))=S_k(Q(G))$, $S_k(A_{\alpha}(G))$ can be regard as a common generalization of $S_k(A(G))$ and $S_k(Q(G))$. Moreover, if $G$ is a graph with $n$ vertices and $m$ edges, then
$$\varepsilon_{\alpha}(G)=\sum\limits_{k=1}^{n}\left\lvert \lambda_k(A_{\alpha}(G))-\frac{2\alpha m}{n} \right\rvert=2S_{\sigma}(A_{\alpha}(G))-\frac{4\alpha \sigma m}{n}=\max\limits_{1\leq k \leq n}\left\{2S_{k}(A_{\alpha}(G))-\frac{4\alpha k m}{n}\right\},$$
where $\varepsilon_{\alpha}(G)$ is the $\alpha$-energy of $G$ defined by Guo and Zhou \cite{GZ}. Thus $S_k(A_{\alpha}(G))$ is close relation with the $\alpha$-energy of $G$. In this paper, we obtain some upper and lower bounds on the sum of $k$ largest eigenvalues of $A_{\alpha}(G)$, which extend the results of $S_k(A(G))$ and $S_k(Q(G))$. In particular, the following problems and conjecture are proposed, repectively.

\begin{problem}\label{pro1,1} %------
For a given $k$, which graph(s) minimize (or maximize) the sum of $k$ largest eigenvalues of $A_{\alpha}(G)$ among all graphs with $n$ vertices?
\end{problem}

\begin{conjecture}\label{co1,3} %------
Let $G$ be a graph with $n$ vertices and $e(G)$ edges. If $\frac{1}{2}\leq \alpha < 1$, then
$$S_k(A_{\alpha}(G))\leq \alpha e(G)+\alpha\binom{k+1}{2}$$
for $1\leq k \leq n$.
\end{conjecture}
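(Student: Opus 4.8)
The plan is to verify the bound at the two endpoints of the interval $[\tfrac12,1]$ and then interpolate with Fan's inequality (Theorem~\ref{th1,1}). The endpoint $\alpha=1$ (formally just outside the stated range, but harmless as an ingredient) asks for $S_k(D(G))\le e(G)+\binom{k+1}{2}$, and this is elementary — in fact I would prove the slightly sharper
$$S_k(D(G))=\sum_{i=1}^{k}d_i\ \le\ e(G)+\binom{k}{2}\qquad(1\le k\le n)$$
by taking a set $T$ of $k$ vertices of largest degree and observing that $\sum_{i=1}^{k}d_i$ counts each edge inside $T$ twice and each edge between $T$ and $V(G)\setminus T$ once, so, partitioning $E(G)$, it equals $e(T)+\bigl(e(G)-e(V(G)\setminus T)\bigr)\le\binom{k}{2}+e(G)$, where $e(\cdot)$ denotes the number of edges within a vertex set. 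The endpoint $\alpha=\tfrac12$ asks for $S_k(Q(G))\le e(G)+\binom{k+1}{2}$, which is precisely Conjecture~\ref{co1,1}.

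Now combine the two. Since $Q(G)=D(G)+A(G)$, for $\tfrac12\le\alpha<1$ one has the nonnegative decomposition $A_\alpha(G)=(2\alpha-1)D(G)+(1-\alpha)Q(G)$ (the coefficients are nonnegative and sum to $\alpha$), so Theorem~\ref{th1,1} yields $S_k(A_\alpha(G))\le (2\alpha-1)S_k(D(G))+(1-\alpha)S_k(Q(G))$. Substituting the two endpoint bounds and using $\binom{k+1}{2}=\binom{k}{2}+k$ gives
$$S_k(A_\alpha(G))\le (2\alpha-1)\Bigl(e(G)+\binom{k}{2}\Bigr)+(1-\alpha)\Bigl(e(G)+\binom{k+1}{2}\Bigr)=\alpha e(G)+\alpha\binom{k}{2}+(1-\alpha)k,$$
and since $\alpha\ge\tfrac12$ forces $(1-\alpha)k\le\alpha k$, the right-hand side is at most $\alpha e(G)+\alpha\binom{k+1}{2}$, as claimed. (The naive splitting $A_\alpha(G)=\alpha D(G)+(1-\alpha)A(G)$ is useless: it would demand $S_k(A(G))\le\tfrac{\alpha}{1-\alpha}k$, already false for $\alpha=\tfrac12$, $k=1$, $G=K_n$; routing through $Q(G)$ rather than $A(G)$ is exactly what lets the hypothesis $\alpha\ge\tfrac12$ pay off.)

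The main obstacle is that Conjecture~\ref{co1,1} is itself open, so the argument above is only conditional. It does, however, prove the present conjecture for the full range $\tfrac12\le\alpha<1$ in every case where Conjecture~\ref{co1,1} is known — regular graphs, trees, unicyclic and bicyclic graphs, all graphs on at most ten vertices, and $k\in\{1,2,n-2,n-1,n\}$. A fully unconditional proof would require either settling Conjecture~\ref{co1,1} or a direct attack on $A_\alpha(G)$, for instance bounding $S_k(A_\alpha(G))=\max\{\alpha\,\mathrm{tr}(X^{\top}D(G)X)+(1-\alpha)\,\mathrm{tr}(X^{\top}A(G)X):X^{\top}X=I_k\}$ and exploiting that $XX^{\top}$ is a rank-$k$ orthogonal projection with diagonal entries in $[0,1]$. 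Finally, observe that for $\alpha>\tfrac12$ the computation above carries a genuine surplus of $(2\alpha-1)k$, so for $\alpha$ bounded away from $\tfrac12$ one only needs the weaker bound $S_k(Q(G))\le e(G)+\binom{k+1}{2}+\tfrac{(2\alpha-1)k}{1-\alpha}$; the real difficulty is localized at $\alpha=\tfrac12$, where the statement coincides exactly with Conjecture~\ref{co1,1}.
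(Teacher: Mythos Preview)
The statement you are attempting to prove is labeled a \emph{conjecture} in the paper and is not proved there; it is proposed as an open problem generalizing Conjecture~\ref{co1,1}. So there is no paper proof to compare against.

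Your argument is a correct \emph{conditional} reduction: assuming Conjecture~\ref{co1,1}, Conjecture~\ref{co1,3} follows for the full range $\tfrac12\le\alpha<1$. The decomposition $A_\alpha(G)=(2\alpha-1)D(G)+(1-\alpha)Q(G)$ is valid, the application of Theorem~\ref{th1,1} is sound, and the elementary bound $S_k(D(G))=\sum_{i=1}^{k}d_i\le e(G)+\binom{k}{2}$ via the double-counting on a top-$k$ vertex set is clean and correct. The final arithmetic, including the use of $(1-\alpha)k\le\alpha k$, is right. This reduction is not stated in the paper, and it is genuinely informative: it shows that Conjecture~\ref{co1,3} is no harder than Conjecture~\ref{co1,1}, and since the case $\alpha=\tfrac12$ of Conjecture~\ref{co1,3} is literally Conjecture~\ref{co1,1} (via $2A_{1/2}(G)=Q(G)$), the two conjectures are in fact equivalent. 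Your observation that the slack $(2\alpha-1)k$ makes the case $\alpha>\tfrac12$ strictly easier is also correct and worth recording.

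You are equally right that this is not an unconditional proof. The ``gap'' is not a flaw in your reasoning but the open status of Conjecture~\ref{co1,1} itself, which you identify explicitly. In the special cases where Conjecture~\ref{co1,1} is known (regular graphs, trees, unicyclic and bicyclic graphs, $k\in\{1,2,n-2,n-1,n\}$, $n\le 10$), your argument does yield Conjecture~\ref{co1,3}; for trees this recovers (in weakened form) the paper's Theorem~\ref{th4,2}(ii), which actually gives the sharper bound $S_k(A_\alpha(T))\le\alpha(n+2k-2)\le\alpha e(T)+\alpha\binom{k+1}{2}$.
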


\begin{problem}\label{pro1,2} %------
Which graph(s) minimize $f(G)=\alpha e(G)+\alpha +1-S_2(A_{\alpha}(G))$ for $\frac{1}{2}\leq \alpha< 1$?
\end{problem}

The remainder of this paper is organized as follows. In Section 2, we recall some useful notions and lemmas used further.  In Section 3, some upper bounds on $S_k(A_{\alpha}(G))$ are obtained. In Section 4, some upper bounds on the sum of the $k$ largest $A_{\alpha}$-eigenvalues of a tree are presented. In Section 5, some lower bounds on $S_k(A_{\alpha}(G))$ are given. Moreover, we prove that path is the minimum $S_2(A_{\alpha}(G))$ among all connected graphs for $\frac{1}{2}\leq \alpha< 1$, which is concerned with Problem \ref{pro1,1}. In Section 6, some graph operations on $S_k(A_{\alpha}(G))$ are presented.

\section{\large  Preliminaries}

 Let $\overline{G}$ be the complement of a graph $G$. The line graph $\mathcal{L}(G)$ is the graph whose vertex set are the edges in $G$, where two vertices are adjacent if the corresponding edges in $G$ have a common vertex. The $k$-th power $G^k$ of a graph $G$ is a graph with the same set of vertices as $G$ such that two vertices are adjacent in $G^k$ if and only if their distance in $G$ is at most $k$. The double graph $\mathcal{D}(G)$ of $G$ is a graph obtained by taking two copies of $G$ and joining each vertex in one copy with the neighbors of corresponding vertex in another copy. A clique of a graph $G$ is the maximal complete subgraph of the graph $G$. The independence number of $G$ is the maximum size of a subset of vertices of $G$ that contains no edge. A matching $\mathcal{M}$  of $G$ is a subset of $E(G)$ such that no two edges in $\mathcal{M}$ share a common vertex. The matching number of $G$ is the maximum number of edges of a matching in $G$. The chromatic number of a graph $G$ is the minimum number of colors such that $G$ can be colored in a way such that no two adjacent vertices have the same color. The nullity of a graph is the multiplicity of the eigenvalue zero in its spectrum. The matrix $L(G)=D(G)-A(G)$ is called the Laplacian matrix of $G$. The second smallest eigenvalue of the Laplacian of a graph $G$, best-known as
the algebraic connectivity of $G$, denoted by $a(G)$.

\begin{lemma}{\bf (\cite{M})}\label{le2,1} %------
If $a$, $b$ are real numbers, where $a<b$, and $n$ is an integer, let $\mathcal{S}_n^{a, b}$ be the set of all symmetric matrices whose entries are between $a$ and $b$. Then for every integer $k$, $2\leq k \leq n$, and every $M\in \mathcal{S}_n^{a, b}$ we have
$$S_k(M)\leq \frac{(b-a)n}{2}(1+\sqrt{k})+\max\{0, a\}.$$
\end{lemma}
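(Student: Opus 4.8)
The plan is to deduce the bound from Fan's maximum principle for partial eigenvalue sums after \emph{centering} $M$ so that its entries lie symmetrically about $0$; the decisive step is a one-line Cauchy--Schwarz estimate in the Frobenius inner product.

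Recall first that for every real symmetric matrix $M$ of order $n$ and every $1\le k\le n$,
$$S_k(M)=\max\bigl\{\,\mathrm{tr}(MX)\ :\ X=X^{\top},\ X^2=X,\ \mathrm{tr}(X)=k\,\bigr\},$$
the maximum running over all rank-$k$ orthogonal projections and being attained at the projection onto a span of eigenvectors for $\lambda_1(M),\dots,\lambda_k(M)$; this is Fan's maximum principle, from which Theorem~\ref{th1,1} itself follows. I would fix an optimal projection $X$ and use two of its elementary properties: $0\preceq X\preceq I_n$, whence $\mathbf 1^{\top}X\mathbf 1\le \mathbf 1^{\top}\mathbf 1=n$; and $\|X\|_F^2=\mathrm{tr}(X^2)=\mathrm{tr}(X)=k$, where $\|\cdot\|_F$ denotes the Frobenius norm.

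Next I would write $M=\tfrac{a+b}{2}J+B$, where $J$ is the all-ones matrix and $B=B^{\top}$ has every entry in $[-\tfrac{b-a}{2},\tfrac{b-a}{2}]$; consequently $\|B\|_F^2=\sum_{i,j}b_{ij}^2\le n^2\bigl(\tfrac{b-a}{2}\bigr)^2$, i.e. $\|B\|_F\le\tfrac{(b-a)n}{2}$. Splitting the trace,
$$S_k(M)=\mathrm{tr}(MX)=\frac{a+b}{2}\,\mathbf 1^{\top}X\mathbf 1+\mathrm{tr}(BX)\le\max\Bigl\{0,\tfrac{(a+b)n}{2}\Bigr\}+\|B\|_F\,\|X\|_F,$$
where the first term uses $0\le\mathbf 1^{\top}X\mathbf 1\le n$ (so the sign of $\tfrac{a+b}{2}$ is absorbed into the maximum) and the second is Cauchy--Schwarz applied to $\mathrm{tr}(BX)=\langle B,X\rangle_F$. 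Since $\|X\|_F=\sqrt k$ and $\|B\|_F\le\tfrac{(b-a)n}{2}$ this gives $S_k(M)\le\max\{0,\tfrac{(a+b)n}{2}\}+\tfrac{(b-a)n}{2}\sqrt k$. It then remains to match this with the stated estimate: if $a+b\le 0$ the first term is $0$ and the right-hand side is already at most $\tfrac{(b-a)n}{2}(1+\sqrt k)$, while if $a+b>0$ the identity $\tfrac{(a+b)n}{2}=\tfrac{(b-a)n}{2}+na$ turns the bound into $\tfrac{(b-a)n}{2}(1+\sqrt k)+na\le\tfrac{(b-a)n}{2}(1+\sqrt k)+\max\{0,na\}$, which is the asserted inequality (with additive constant $\max\{0,na\}$).

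Since the estimate is short, the only place that repays care is the centering step and the sign bookkeeping around $a+b$: if one skips centering and bounds $\mathrm{tr}(MX)$ directly by $\|M\|_F\|X\|_F$, one obtains only the weaker $\sqrt k\,n\max\{|a|,|b|\}$; the sharper factor $\tfrac12(1+\sqrt k)$ appears precisely because the rank-one part $\tfrac{a+b}{2}J$ contributes to $S_k$ a \emph{single} eigenvalue $\tfrac{(a+b)n}{2}$ rather than $k$ of them, while the fluctuation $B$ is controlled in Frobenius norm, where the number of active directions of $X$ enters only through $\|X\|_F=\sqrt k$. Equivalently, one may package the same computation through Theorem~\ref{th1,1} applied to $M=\tfrac{a+b}{2}J+B$, using $S_k\bigl(\tfrac{a+b}{2}J\bigr)\le\max\{0,\tfrac{(a+b)n}{2}\}$ (rank one) and $S_k(B)\le\sqrt k\,\|B\|_F$ by the same Cauchy--Schwarz step, since $S_k(B)=\mathrm{tr}(BX')\le\|B\|_F\|X'\|_F$ for the optimal rank-$k$ projection $X'$.
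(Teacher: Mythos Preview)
The paper gives no proof of Lemma~\ref{le2,1}; it is quoted from Mohar~\cite{M}. So there is nothing to compare your argument against in the paper itself.

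Your argument is correct and is essentially Mohar's: center $M$ as $\tfrac{a+b}{2}J+B$, use Fan's maximum principle to write $S_k(M)=\mathrm{tr}(MX)$ for a rank-$k$ projection $X$, bound the rank-one part $\tfrac{a+b}{2}\,\mathbf 1^{\top}X\mathbf 1$ using $0\le\mathbf 1^{\top}X\mathbf 1\le n$, and control $\mathrm{tr}(BX)$ by Cauchy--Schwarz with $\|X\|_F=\sqrt{k}$ and $\|B\|_F\le\tfrac{(b-a)n}{2}$. Each step is sound.

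You are also right to flag the additive constant. Your proof yields $\max\{0,na\}$, not the $\max\{0,a\}$ printed in the lemma, and in fact the printed version is false when $a>0$: take $n=3$, $a=1$, $b=2$ and $M=2J_3$, whose eigenvalues are $6,0,0$; then $S_2(M)=6$, while $\tfrac{(b-a)n}{2}(1+\sqrt 2)+\max\{0,a\}=\tfrac32(1+\sqrt2)+1<5$. With your constant $\max\{0,na\}=3$ the bound becomes $\tfrac32(1+\sqrt2)+3>6$, as it should. So the lemma as stated in the paper carries a transcription error; the correct additive term is $\max\{0,na\}$ (equivalently, $n\max\{0,a\}$). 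This does not affect the paper's applications, since Theorem~\ref{th3,1} invokes the lemma only with $a=0$.
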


\begin{lemma}{\bf (\cite{RSR})}\label{le2,2} %------
Let $M$ be an $n\times n$ matrix with nonnegative eigenvalues. Let $1\leq k \leq n-1$. Then
$$S_k(M)\leq \frac{k(tr(M))}{n}+\sqrt{\frac{k(n-k)}{n}f(M)},$$
where
$$f(M)=\sum\limits_{i=1}^{n}\sum\limits_{k=1}^{n}m_{ik}m_{ki}-\frac{(tr(M))^2}{n}.$$
\end{lemma}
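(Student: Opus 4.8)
The plan is to reduce the inequality to two applications of the Cauchy--Schwarz inequality applied to the \emph{centered} eigenvalues. First I would record the identity
$f(M)=\mathrm{tr}(M^2)-\frac{(\mathrm{tr}(M))^2}{n}=\sum_{i=1}^n\lambda_i(M)^2-\frac{1}{n}\bigl(\sum_{i=1}^n\lambda_i(M)\bigr)^2$, which holds because $\sum_{i=1}^n\sum_{k=1}^n m_{ik}m_{ki}=\mathrm{tr}(M^2)=\sum_{i=1}^n\lambda_i(M)^2$; the last equality is valid for any matrix whose eigenvalues are real, hence in particular under the stated hypothesis, and it also shows $f(M)\ge 0$, so the square root on the right-hand side is well defined.

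Next I would set $\bar\lambda=\frac1n\mathrm{tr}(M)$ and $\mu_i=\lambda_i(M)-\bar\lambda$ for $1\le i\le n$. Then $\sum_{i=1}^n\mu_i=0$ and, by the displayed identity, $\sum_{i=1}^n\mu_i^2=f(M)$. Moreover the quantity to be bounded is exactly $P:=S_k(M)-\frac{k}{n}\mathrm{tr}(M)=\sum_{i=1}^k\mu_i$. If $P\le 0$ the claimed bound is immediate (the right-hand side is nonnegative), so I may assume $P>0$.

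The key step uses $\sum_{i=1}^n\mu_i=0$ to rewrite the head sum as (minus) the tail sum: $P=\sum_{i=1}^k\mu_i=-\sum_{i=k+1}^n\mu_i$. Cauchy--Schwarz applied to the two blocks then gives $P^2\le k\sum_{i=1}^k\mu_i^2$ and $P^2\le(n-k)\sum_{i=k+1}^n\mu_i^2$. Dividing the first inequality by $k$, the second by $n-k$, and adding yields $P^2\bigl(\tfrac1k+\tfrac1{n-k}\bigr)\le\sum_{i=1}^n\mu_i^2=f(M)$, that is, $\tfrac{n}{k(n-k)}P^2\le f(M)$. Taking square roots gives $P\le\sqrt{\tfrac{k(n-k)}{n}f(M)}$, which after substituting back $P=S_k(M)-\tfrac{k}{n}\mathrm{tr}(M)$ is precisely the assertion.

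Since the argument is short, I do not expect a genuine obstacle; the only mild subtlety is recognizing that the ``tail sum'' of the centered eigenvalues carries the same information as the ``head sum'' and then combining the two Cauchy--Schwarz estimates with the weights $1/k$ and $1/(n-k)$ (whose reciprocal sum is exactly $\tfrac{n}{k(n-k)}$). Care is also needed to dispatch the trivial case $P\le 0$ separately and to note from the start that $f(M)\ge 0$ so that the right-hand side makes sense.
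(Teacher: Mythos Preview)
Your argument is correct. The paper does not prove this lemma; it merely quotes it from \cite{RSR} as a preliminary tool, so there is no ``paper's own proof'' to compare against. Your centered-eigenvalue approach (writing $\mu_i=\lambda_i-\bar\lambda$, recognizing $f(M)=\sum_i\mu_i^2$ via $\mathrm{tr}(M^2)=\sum_i\lambda_i^2$, and then combining the two Cauchy--Schwarz bounds on the head and tail blocks with weights $1/k$ and $1/(n-k)$) is exactly the standard route to this kind of inequality and is essentially the argument given in the original reference. One small remark: your proof never uses the nonnegativity of the eigenvalues, only that they are real (so that $\mathrm{tr}(M^2)=\sum_i\lambda_i^2$ and $f(M)\ge 0$); the bound therefore holds in that slightly greater generality.
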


\begin{lemma}{\bf (\cite{BDFG, FT})}\label{le2,3} %------
Let $G$ be a graph with $n$ vertices and $m$ edges. Then
$$\frac{4m^2}{n}+\frac{1}{2}(\Delta-\delta)^2\leq Z_1(G)\leq \frac{4m^2}{n}+\frac{n}{4}(\Delta-\delta)^2.$$
\end{lemma}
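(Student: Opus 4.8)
The plan is to prove both inequalities by applying the Cauchy--Schwarz inequality and elementary estimates to the quantity $Z_1(G)=\sum_{v\in V(G)}d_v^2$, using the constraint $\sum_{v\in V(G)}d_v=2m$ together with the fact that every degree lies in the interval $[\delta,\Delta]$. First I would record the variance-type identity
$$\sum_{v\in V(G)}\Big(d_v-\frac{2m}{n}\Big)^2=Z_1(G)-\frac{4m^2}{n},$$
so that both bounds reduce to controlling $\sum_v(d_v-\tfrac{2m}{n})^2$ from below and above in terms of $(\Delta-\delta)^2$.

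For the \emph{upper} bound I would argue as follows. Write $a_v=d_v-\delta\in[0,\Delta-\delta]$ for each $v$; then $\sum_v a_v=2m-n\delta$ and $\sum_v a_v^2=\sum_v d_v^2-2\delta\cdot 2m+n\delta^2$. Since $0\le a_v\le \Delta-\delta$ we have $a_v^2\le (\Delta-\delta)a_v$, hence $\sum_v a_v^2\le (\Delta-\delta)(2m-n\delta)$. Substituting back and simplifying, one obtains $Z_1(G)\le 2m\delta+2m\Delta-n\delta\Delta=\ldots$; the remaining step is to check by a short algebraic manipulation (completing the square in $m$) that this is at most $\frac{4m^2}{n}+\frac n4(\Delta-\delta)^2$, which amounts to the true inequality $\big(\frac{2m}{n}-\frac{\Delta+\delta}{2}\big)^2\ge 0$. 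Equivalently, one may invoke the classical bound $\sum_v(x_v-\bar x)^2\le \frac n4(M-m')^2$ valid for any reals $x_v\in[m',M]$ with mean $\bar x$, applied to $x_v=d_v$.

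For the \emph{lower} bound I would use Cauchy--Schwarz in the form $\big(\sum_v d_v\big)^2\le n\sum_v d_v^2$, which already gives $Z_1(G)\ge \frac{4m^2}{n}$; to get the extra additive term I would sharpen this. One clean way: pick vertices $u,w$ with $d_u=\Delta$, $d_w=\delta$ (if $\Delta=\delta$ there is nothing to prove), and apply Cauchy--Schwarz to the remaining $n-2$ degrees together with a careful bookkeeping of the contribution of $d_u^2+d_w^2$ versus $(d_u+d_w)^2/2$; the gap $d_u^2+d_w^2-\frac12(d_u+d_w)^2=\frac12(\Delta-\delta)^2$ is exactly the surplus needed. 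Concretely, $Z_1(G)=d_u^2+d_w^2+\sum_{v\ne u,w}d_v^2\ge \frac12(\Delta-\delta)^2+\frac12(\Delta+\delta)^2+\frac{(2m-\Delta-\delta)^2}{n-2}$, and it remains to verify $\frac12(\Delta+\delta)^2+\frac{(2m-\Delta-\delta)^2}{n-2}\ge \frac{4m^2}{n}$, again a one-line convexity/Cauchy--Schwarz check (it is the statement that the mean of two groups of sizes $2$ and $n-2$ with the indicated sums of squares dominates the global mean square).

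The main obstacle is purely bookkeeping: making sure the two ``one-line'' algebraic reductions at the ends of the previous two paragraphs are carried out correctly and that the edge cases $\Delta=\delta$ (regular graphs, where both sides collapse to $4m^2/n=Z_1$) and $n=2$ are handled. There is no conceptual difficulty; the only care needed is choosing the grouping for Cauchy--Schwarz so that precisely the coefficient $\frac12$ in front of $(\Delta-\delta)^2$ (lower bound) and $\frac n4$ (upper bound) emerges. I expect to cite \cite{BDFG, FT} for the statement and reproduce only the short computation, since the result is standard.
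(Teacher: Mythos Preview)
The paper does not prove Lemma~\ref{le2,3} at all: it is quoted from \cite{BDFG, FT} as a preliminary fact and used later (in Corollaries~\ref{cor3,1} and~\ref{cor3,4}) without any argument. So there is no ``paper's own proof'' to compare your proposal against.

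That said, your plan is correct and complete. The upper bound is exactly Popoviciu's variance inequality: from $0\le d_v-\delta\le\Delta-\delta$ one gets $(d_v-\delta)^2\le(\Delta-\delta)(d_v-\delta)$, summing gives $Z_1\le 2m(\Delta+\delta)-n\Delta\delta$, and the identity $2m(\Delta+\delta)-n\Delta\delta-\tfrac{4m^2}{n}-\tfrac{n}{4}(\Delta-\delta)^2=-\tfrac{1}{n}\bigl(2m-\tfrac{n}{2}(\Delta+\delta)\bigr)^2\le 0$ finishes it. For the lower bound, your split $\Delta^2+\delta^2=\tfrac12(\Delta-\delta)^2+\tfrac12(\Delta+\delta)^2$ together with Cauchy--Schwarz on the remaining $n-2$ degrees, followed by the Engel (Titu) form $\tfrac{s^2}{2}+\tfrac{(2m-s)^2}{n-2}\ge\tfrac{(2m)^2}{n}$ with $s=\Delta+\delta$, is a clean and correct route. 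The edge case $n\le 2$ forces $\Delta=\delta$, which you already single out, so the division by $n-2$ causes no trouble. Your closing remark---cite \cite{BDFG, FT} and include only the short computation---matches exactly how the paper treats the lemma, except that the paper omits even the computation.
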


\begin{lemma}{\bf (\cite{N})}\label{le2,4} %------
Let $G$ be a graph with $n$ vertices. Then
$$\sqrt{\frac{Z_1}{n}}\leq \lambda_1(A_{\alpha}(G))\leq \Delta.$$
\end{lemma}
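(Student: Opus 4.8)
The plan is to prove the two inequalities separately, in both cases using the fact that for $\alpha\in[0,1]$ the matrix $A_{\alpha}(G)=\alpha D(G)+(1-\alpha)A(G)$ has nonnegative entries and is symmetric, so by the Perron--Frobenius theorem its spectral radius $\rho(A_{\alpha}(G))$ equals its largest eigenvalue $\lambda_1(A_{\alpha}(G))$ (no irreducibility/connectivity assumption is needed here).

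For the upper bound $\lambda_1(A_{\alpha}(G))\le \Delta$, I would compute the $v$-th row sum of $A_{\alpha}(G)$: the diagonal entry contributes $\alpha d_v$ and the off-diagonal entries contribute $(1-\alpha)$ for each of the $d_v$ neighbours of $v$, so the row sum is $\alpha d_v+(1-\alpha)d_v=d_v\le\Delta$. Since the spectral radius of a nonnegative matrix is at most its maximum row sum (equivalently, apply Gershgorin's theorem, or test the Collatz--Wielandt characterization against the all-ones vector), we get $\lambda_1(A_{\alpha}(G))=\rho(A_{\alpha}(G))\le\max_{v\in V(G)}d_v=\Delta$.

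For the lower bound, the key step is to pass to the square. Because $\lambda_1(A_{\alpha}(G))=\rho(A_{\alpha}(G))\ge|\lambda_n(A_{\alpha}(G))|$, the eigenvalue of $A_{\alpha}(G)^2$ of largest modulus is exactly $\lambda_1(A_{\alpha}(G))^2$, i.e. $\lambda_1\big(A_{\alpha}(G)^2\big)=\lambda_1(A_{\alpha}(G))^2$. I would then apply the Rayleigh quotient bound to $A_{\alpha}(G)^2$ with the all-ones vector $\mathbf{1}$: $\lambda_1(A_{\alpha}(G))^2=\lambda_1\big(A_{\alpha}(G)^2\big)\ge \mathbf{1}^{\top}A_{\alpha}(G)^2\mathbf{1}/(\mathbf{1}^{\top}\mathbf{1})$. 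Since $A_{\alpha}(G)\mathbf{1}$ is the vector of row sums, its $v$-th entry is $d_v$, whence $\mathbf{1}^{\top}A_{\alpha}(G)^2\mathbf{1}=\|A_{\alpha}(G)\mathbf{1}\|^2=\sum_{v\in V(G)}d_v^2=Z_1$ and $\mathbf{1}^{\top}\mathbf{1}=n$. This gives $\lambda_1(A_{\alpha}(G))^2\ge Z_1/n$, and taking square roots yields $\lambda_1(A_{\alpha}(G))\ge\sqrt{Z_1/n}$.

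The argument is short, and the only place that genuinely uses $\alpha\in[0,1]$ rather than mere symmetry is the identity $\lambda_1\big(A_{\alpha}(G)^2\big)=\lambda_1(A_{\alpha}(G))^2$, which relies on nonnegativity of $A_{\alpha}(G)$ to ensure the spectral radius is realised by the largest eigenvalue and not by the most negative one; so that is the step I would state most carefully. Everything else is a direct computation with the all-ones test vector, and one may note in passing that the bound is consistent with (and, by Cauchy--Schwarz, stronger than) the trivial estimate $\lambda_1(A_{\alpha}(G))\ge 2m/n$.
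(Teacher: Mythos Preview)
Your argument is correct. The paper does not supply its own proof of this lemma; it is quoted from Nikiforov's original article \cite{N} as a known result, so there is nothing to compare against here directly.

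Both halves of your proof are sound. The row-sum computation $\alpha d_v+(1-\alpha)d_v=d_v$ immediately yields the upper bound via the standard bound on the spectral radius of a nonnegative matrix. For the lower bound, the squaring trick is the natural route: the only delicate point is precisely the one you flag, namely that $\lambda_1\big(A_{\alpha}(G)^2\big)=\lambda_1(A_{\alpha}(G))^2$, and you justify this correctly through nonnegativity of $A_{\alpha}(G)$ (so that $\rho(A_{\alpha}(G))=\lambda_1(A_{\alpha}(G))\ge|\lambda_n(A_{\alpha}(G))|$). The computation $A_{\alpha}(G)\mathbf{1}=(d_v)_{v}$ and hence $\mathbf{1}^{\top}A_{\alpha}(G)^2\mathbf{1}=Z_1$ is exactly right, and taking the square root is legitimate since $\lambda_1(A_{\alpha}(G))\ge 0$. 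This is essentially the same argument Nikiforov gives in \cite{N} (Proposition~14 there), so your approach matches the source.
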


\begin{lemma}{\bf (\cite{CLM})}\label{le2,5} %------
Let $G$ be a graph of with $n$ vertices and $\Delta(G)<n-1$. If $\frac{1}{2}<\alpha<1$, then
$$\lambda_2(A_{\alpha}(G))\leq \alpha(n-2)$$
If the equality holds, then the complement of $G$ has at least one component isomorphic to $K_2$.
\end{lemma}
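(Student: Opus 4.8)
The plan is to pass to the complement $\overline{G}$ and exploit the identity
$$A_{\alpha}(G)+A_{\alpha}(\overline{G})=(1-\alpha)J+(\alpha n-1)I=:C,$$
which follows at once from $A(G)+A(\overline{G})=J-I$ and $D(G)+D(\overline{G})=(n-1)I$ (here $J$ is the all-ones matrix). On the hyperplane $W=\mathbf{1}^{\perp}$ (dimension $n-1$, with $\mathbf{1}$ the all-ones vector) the matrix $C$ acts as the scalar $\alpha n-1$, so for $x\in W$ one has $x^{T}A_{\alpha}(G)x=(\alpha n-1)\|x\|^{2}-x^{T}A_{\alpha}(\overline{G})x$. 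By the Courant--Fischer theorem, $\lambda_{2}(A_{\alpha}(G))\le\max_{0\ne x\in W}x^{T}A_{\alpha}(G)x/\|x\|^{2}$, so the bound $\alpha(n-2)$ will drop out the moment I can show $A_{\alpha}(\overline{G})\succeq(2\alpha-1)I$, and the extremal analysis will come from tracking when that last estimate is tight.

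The key step — and the one place where the hypotheses $\Delta(G)<n-1$ (equivalently $\delta(\overline{G})\ge1$) and $\tfrac12<\alpha<1$ are used — is the quadratic-form identity
$$x^{T}\big(A_{\alpha}(\overline{G})-(2\alpha-1)I\big)x=(1-\alpha)\sum_{uv\in E(\overline{G})}(x_{u}+x_{v})^{2}+(2\alpha-1)\sum_{v}\big(d_{\overline{G}}(v)-1\big)x_{v}^{2},$$
which I would verify by expanding $x^{T}A_{\alpha}(\overline{G})x=\alpha\sum_{v}d_{\overline{G}}(v)x_{v}^{2}+2(1-\alpha)\sum_{uv\in E(\overline{G})}x_{u}x_{v}$ and collecting terms. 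Since $1-\alpha>0$, $2\alpha-1>0$, and $d_{\overline{G}}(v)\ge1$ for every $v$, both sums on the right are nonnegative, giving $x^{T}A_{\alpha}(\overline{G})x\ge(2\alpha-1)\|x\|^{2}$; substituting this into the expression for $x^{T}A_{\alpha}(G)x$ valid on $W$ yields $x^{T}A_{\alpha}(G)x\le\alpha(n-2)\|x\|^{2}$, hence $\lambda_{2}(A_{\alpha}(G))\le\alpha(n-2)$. (One could instead route the bound through Fan's Theorem~\ref{th1,1} or a Weyl inequality applied to $A_{\alpha}(G)=C-A_{\alpha}(\overline{G})$, but the Rayleigh-quotient computation on $\mathbf{1}^{\perp}$ looks cleanest and feeds the equality case directly.)

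For equality, if $\lambda_{2}(A_{\alpha}(G))=\alpha(n-2)$ then the maximum of the Rayleigh quotient of $A_{\alpha}(G)$ over $W$ equals $\alpha(n-2)$ and is attained at some nonzero $x_{0}\in W$ (a maximizer on the unit sphere of $W$), so $x_{0}^{T}A_{\alpha}(G)x_{0}=\alpha(n-2)\|x_{0}\|^{2}$; by the displayed identity this forces $x_{0}^{T}\big(A_{\alpha}(\overline{G})-(2\alpha-1)I\big)x_{0}=0$, whence both sums vanish: $x_{0,u}+x_{0,v}=0$ on every edge of $\overline{G}$, and $x_{0,v}=0$ whenever $d_{\overline{G}}(v)\ge2$. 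Picking $w$ with $x_{0,w}\ne0$ gives $d_{\overline{G}}(w)=1$, and its unique neighbour $w'$ satisfies $x_{0,w'}=-x_{0,w}\ne0$, hence $d_{\overline{G}}(w')=1$, so $\{w,w'\}$ spans a $K_{2}$-component of $\overline{G}$. I expect no real obstacle beyond spotting the decomposition of $x^{T}A_{\alpha}(\overline{G})x$ into a weighted sum of edge-square terms plus a weighted degree term; the only minor points to watch are the degenerate small orders (for $n=2$ the hypothesis forces $G=\overline{K_{2}}$, $\overline{G}=K_{2}$, consistent with equality) and confirming $x_{0}\ne0$, which holds because it maximizes a Rayleigh quotient.
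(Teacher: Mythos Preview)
The paper does not prove Lemma~\ref{le2,5}; it is quoted from \cite{CLM} without proof. Your argument is correct and is essentially the standard route: the complement identity $A_{\alpha}(G)+A_{\alpha}(\overline{G})=(1-\alpha)J+(\alpha n-1)I$, the edge-square decomposition showing $A_{\alpha}(\overline{G})\succeq(2\alpha-1)I$ once $\delta(\overline{G})\ge1$, and then Courant--Fischer on the codimension-one subspace $\mathbf{1}^{\perp}$. The equality analysis is clean and correct: vanishing of both nonnegative sums forces a $K_{2}$-component in $\overline{G}$ exactly as you describe. This matches in spirit what \cite{CLM} does (the alternative packaging is Weyl's inequality applied to $A_{\alpha}(G)=C-A_{\alpha}(\overline{G})$ together with $\lambda_{n}(A_{\alpha}(\overline{G}))\ge 2\alpha-1$, which is equivalent to your quadratic-form bound), so there is nothing substantively different to compare.
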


\begin{lemma}{\bf (\cite{HMT})}\label{le2,6} %------
Let $T$ be a tree with $n$ vertices. Then $S_{k}(L(T))\leq n+2k-2$ for $1\leq k\leq n$.
\end{lemma}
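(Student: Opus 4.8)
\noindent{\bf Proof idea.~}
The plan is to trade the $k$-dependent statement for a single scalar inequality about the Laplacian spectrum of $T$, and then prove that inequality by induction on $n$. Recall the dual of $S_k(M)=\max\{\operatorname{tr}(PM):0\preceq P\preceq I,\ \operatorname{tr}P=k\}$: for every real symmetric $M$ of order $n$, every $t\in\mathbb{R}$ and every $k$,
$$S_k(M)\ \le\ kt+\sum_{i=1}^{n}\bigl(\lambda_i(M)-t\bigr)^{+},\qquad x^{+}:=\max\{x,0\},$$
with equality when $t=\lambda_k(M)$. Taking $t=2$ this gives, for \emph{every} $k$,
$$S_k(L(T))\ \le\ 2k+\sum_{i=1}^{n}\bigl(\lambda_i(L(T))-2\bigr)^{+}.$$
Hence it suffices to prove the one $k$-free inequality
$$\sum_{i:\,\lambda_i(L(T))\ge 2}\bigl(\lambda_i(L(T))-2\bigr)\ =\ \operatorname{tr}\bigl((L(T)-2I)^{+}\bigr)\ \le\ n-2$$
for every tree $T$ on $n\ge 2$ vertices; call this $(\star)$. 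For $T=K_{1,n-1}$ the Laplacian eigenvalues are $n,\,1^{(n-2)},\,0$, so $(\star)$ holds with equality: the star is the extremal case, and $(\star)$ carries no room to spare.

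I would prove $(\star)$ by induction on $n$; the base case $n=2$ is immediate ($L(K_2)$ has eigenvalues $0,2$). Let $n\ge3$. If $T$ is a star, $(\star)$ is an equality as above. Otherwise take a longest path $v_0v_1\cdots v_\ell$ ($\ell\ge3$) and set $u=v_1$; a standard argument shows that every neighbour of $u$ other than $w:=v_2$ is a leaf, so $u$ has $s\ge1$ pendant neighbours $\ell_1,\dots,\ell_s$ and the single non-pendant neighbour $w$. Let $T''=T-\{\ell_1,\dots,\ell_s\}$, a tree on $n-s$ vertices in which $u$ is a leaf. The $s-1$ vectors $e_{\ell_i}-e_{\ell_j}$ are $L(T)$-eigenvectors for the eigenvalue $1$, and $\operatorname{spec}(L(T))=\operatorname{spec}(M)\cup\{1^{(s-1)}\}$ (with multiplicities), where
$$M=\begin{pmatrix}L(T'')+s\,e_ue_u^{\top}&-\sqrt{s}\,e_u\\[3pt]-\sqrt{s}\,e_u^{\top}&1\end{pmatrix}$$
is the $(n-s+1)\times(n-s+1)$ quotient. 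Since $1<2$, $(\star)$ for $T$ amounts to $\operatorname{tr}\bigl((M-2I)^{+}\bigr)\le n-2$, while by the inductive hypothesis $\operatorname{tr}\bigl((L(T'')-2I)^{+}\bigr)\le(n-s)-2$. I would then compare these two traces: using $\operatorname{tr}(X^{+})=\max_{0\preceq P\preceq I}\operatorname{tr}(PX)$ together with $\operatorname{tr}\bigl((A+B)^{+}\bigr)\le\operatorname{tr}(A^{+})+\operatorname{tr}(B^{+})$ (or Cauchy/Weyl interlacing under the bordering and the rank-one bump $s\,e_ue_u^{\top}$) one bounds $\operatorname{tr}\bigl((M-2I)^{+}\bigr)$ by $\operatorname{tr}\bigl((L(T'')-2I)^{+}\bigr)$ plus an error term, and the crucial feature is that $u$ is a \emph{leaf} of $T''$, so $e_u^{\top}(L(T'')-2I)e_u=1-2=-1<0$; this ``credit'' is what must be exploited to keep the error down to exactly $s$ and close the induction.

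A complementary viewpoint may make $(\star)$ easier: since $T$ is bipartite, $L(T)$ and the signless Laplacian $Q(T)=D(T)+A(T)$ are cospectral, and $Q(T)=NN^{\top}$ for the unsigned incidence matrix $N$, whose Gram matrix is $N^{\top}N=2I+A(\mathcal{L}(T))$; hence the $n-1$ nonzero Laplacian eigenvalues of $T$ are precisely $2+\mu$ over $\mu\in\operatorname{spec}(A(\mathcal{L}(T)))$. Consequently $\operatorname{tr}\bigl((L(T)-2I)^{+}\bigr)$ equals the sum of the positive adjacency eigenvalues of $\mathcal{L}(T)$, i.e.\ $\tfrac12\varepsilon(A(\mathcal{L}(T)))$, and $(\star)$ reads: the sum of the positive adjacency eigenvalues of the line graph of a tree on $n$ vertices is at most $n-2=|V(\mathcal{L}(T))|-1$. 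As $\mathcal{L}(T)$ is a ``tree of cliques'' (its blocks are the cliques $K_{d_T(x)}$ over the interior vertices $x$ of $T$, glued in a tree pattern, with $\mathcal{L}(K_{1,n-1})=K_{n-1}$ the tight instance), one could instead try an induction on the blocks. One may also invoke the known fact that $L(T)$ has exactly $\beta(T)$ eigenvalues $\ge2$, where $\beta(T)$ is the matching number, so that $\lambda_i(L(T))\ge2$ iff $i\le\beta(T)$; then $(\star)$ pins down $S_{\beta(T)}(L(T))$ and the remaining $S_k$ follow by monotonicity — although this is not needed once the variational reduction to $(\star)$ is in place.

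The hard part is the quantitative step inside the induction. A crude use of Fan's inequality (Theorem~\ref{th1,1}) or of Cauchy interlacing when deleting the pendant(s) loses an additive constant, an ``off-by-one'' which would only give $\operatorname{tr}\bigl((L(T)-2I)^{+}\bigr)\le n-1$; because the star is extremal this really must be fought. I expect one needs either a genuine case split — peeling a pendant $P_2$ when $d_T(u)=2$ versus peeling the whole pendant star $K_{1,s}$ at $u$ when $d_T(u)=s+1\ge3$, and analysing the two families of reduced trees separately — or a strengthened induction hypothesis that carries along more of the reduced spectrum than $\operatorname{tr}\bigl((\,\cdot\,-2I)^{+}\bigr)$ alone.
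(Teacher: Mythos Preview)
The present paper does not prove Lemma~\ref{le2,6}; it is quoted from \cite{HMT} in the Preliminaries and used as a black box, so there is no in-paper proof to compare against. That said, your reduction to the single inequality $(\star)$ via $S_k(M)\le kt+\operatorname{tr}\bigl((M-tI)^{+}\bigr)$ with $t=2$, and your line-graph reformulation $\operatorname{tr}\bigl((L(T)-2I)^{+}\bigr)=\operatorname{tr}\bigl(A(\mathcal{L}(T))^{+}\bigr)$, are both correct and are exactly the opening moves of the Haemers--Mohammadian--Tayfeh-Rezaie argument.

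The gap is where you say it is: your leaf-peeling induction does not close, because subadditivity applied to your splitting of $M-2I$ gives an error of $\tfrac12\bigl(s+\sqrt{s^{2}+4s}\bigr)>s$. The missing idea is to apply the \emph{same} subadditivity, but to the clique decomposition you already spotted in your ``complementary viewpoint''. Since every incident pair of edges of $T$ has a unique common endpoint, one has the exact edge-disjoint splitting
\[
A(\mathcal{L}(T))\ =\ \sum_{v\in V(T)} A\bigl(K_{d_T(v)}\bigr),
\]
the $v$-th summand supported on the $d_T(v)$ edges through $v$ (and equal to $0$ when $d_T(v)=1$). Now $\operatorname{tr}\bigl((\sum_v X_v)^{+}\bigr)\le\sum_v\operatorname{tr}(X_v^{+})$ together with $\operatorname{tr}\bigl(A(K_r)^{+}\bigr)=r-1$ gives
\[
\operatorname{tr}\bigl(A(\mathcal{L}(T))^{+}\bigr)\ \le\ \sum_{v\in V(T)}\bigl(d_T(v)-1\bigr)\ =\ 2(n-1)-n\ =\ n-2,
\]
which is $(\star)$. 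No induction and no off-by-one: this single application of subadditivity to the right decomposition is the entire proof, and the star $K_{1,n-1}$ (where the sum has the single nonzero term $A(K_{n-1})$) is the equality case. Your vertex-peeling route is an unnecessary detour.
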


\begin{lemma}{\bf (\cite{B})}\label{le2,7} %------
Let $M$ be an $n \times n$ Hermitian matrix. Then for $1\leq k \leq n$,
$$\sum\limits_{i=1}^{k}\lambda_{i}(M)=\max\sum\limits_{i=1}^{k}\langle MX_i, X_i\rangle,$$
where the maximum is taken over all orthonormal $k$-tuples of vectors $\{X_1, \ldots, X_k\}$ in $\mathbb{C}^{n}$.
\end{lemma}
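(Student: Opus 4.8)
The plan is to establish the two inequalities separately: exhibit one orthonormal $k$-tuple that attains $\sum_{i=1}^{k}\lambda_{i}(M)$, and then show no orthonormal $k$-tuple can exceed it. Fix an orthonormal eigenbasis $u_1,\dots,u_n$ of $\mathbb{C}^{n}$ with $Mu_j=\lambda_j(M)u_j$ and $\lambda_1(M)\geq\cdots\geq\lambda_n(M)$. For the lower bound on the right-hand side, take $X_i=u_i$ for $1\leq i\leq k$; this is an orthonormal $k$-tuple and $\sum_{i=1}^{k}\langle MX_i,X_i\rangle=\sum_{i=1}^{k}\lambda_i(M)$, so the maximum is at least $\sum_{i=1}^{k}\lambda_i(M)$.

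For the reverse inequality, let $\{X_1,\dots,X_k\}$ be an arbitrary orthonormal $k$-tuple. Using the spectral resolution to write $\langle MX_i,X_i\rangle=\sum_{j=1}^{n}\lambda_j(M)\,|\langle X_i,u_j\rangle|^{2}$ and setting $t_j:=\sum_{i=1}^{k}|\langle X_i,u_j\rangle|^{2}$, I would rewrite $\sum_{i=1}^{k}\langle MX_i,X_i\rangle=\sum_{j=1}^{n}\lambda_j(M)\,t_j$. Two facts constrain the vector $(t_1,\dots,t_n)$: each $t_j$ equals the squared norm of the orthogonal projection of $u_j$ onto $\operatorname{span}\{X_1,\dots,X_k\}$, so $0\leq t_j\leq\|u_j\|^{2}=1$; and $\sum_{j=1}^{n}t_j=\sum_{i=1}^{k}\sum_{j=1}^{n}|\langle X_i,u_j\rangle|^{2}=\sum_{i=1}^{k}\|X_i\|^{2}=k$. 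Among all real vectors with $0\leq t_j\leq 1$ and $\sum_j t_j=k$, the linear functional $\sum_j\lambda_j(M)t_j$ is maximized by placing weight $1$ on the $k$ largest eigenvalues (a short exchange argument: shifting weight from a larger $\lambda_j$ to a smaller one never increases the sum), which gives $\sum_{j=1}^{n}\lambda_j(M)t_j\leq\sum_{j=1}^{k}\lambda_j(M)$. Combined with the previous paragraph, this yields the claimed equality.

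The only step needing genuine care is the bound $t_j\leq 1$, which is just Bessel's inequality for the orthonormal system $\{X_i\}$; everything else is bookkeeping together with the elementary linear-programming observation. I would also note that, since the lemma is applied below only to the real symmetric matrix $A_{\alpha}(G)$, one may equally well restrict the orthonormal tuples to $\mathbb{R}^{n}$, the argument being unchanged.
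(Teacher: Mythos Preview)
Your argument is correct: this is the standard proof of Ky Fan's maximum principle, and each step (the spectral expansion, Bessel's inequality giving $t_j\le 1$, Parseval giving $\sum_j t_j=k$, and the exchange argument for the linear program) is sound. Note that the paper does not actually supply its own proof of this lemma; it is quoted without proof from Bhatia's \emph{Matrix Analysis}, so there is no alternative argument in the paper to compare yours against.
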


\begin{lemma}{\bf (\cite{LXS})}\label{le2,8} %------
Let $G$ be a graph with $n$ vertices. If $e\in E(G)$ and $\alpha\geq \frac{1}{2}$, then
$$\lambda_i(A_{\alpha}(G))\geq \lambda_i(A_{\alpha}(G-e))$$
for $1\leq i \leq n$.
\end{lemma}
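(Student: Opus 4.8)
The plan is to exhibit $A_\alpha(G)$ as $A_\alpha(G-e)$ plus a fixed positive semidefinite perturbation, and then invoke Weyl's monotonicity for eigenvalues. First I would compute the difference explicitly. If $e=uv$, then deleting $e$ decreases the $(u,u)$ and $(v,v)$ diagonal entries of $D$ by $1$ each and removes the two off-diagonal $1$'s at positions $(u,v)$ and $(v,u)$ of $A$, so
$$A_\alpha(G)-A_\alpha(G-e)=\alpha\bigl(E_{uu}+E_{vv}\bigr)+(1-\alpha)\bigl(E_{uv}+E_{vu}\bigr)=:R_e,$$
where $E_{ij}$ denotes the $n\times n$ matrix with a $1$ in entry $(i,j)$ and $0$ elsewhere. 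The matrix $R_e$ vanishes outside the $2\times 2$ principal submatrix on the rows and columns indexed by $\{u,v\}$, where it equals $\left(\begin{smallmatrix}\alpha & 1-\alpha \\ 1-\alpha & \alpha\end{smallmatrix}\right)$.

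Next I would observe that the hypothesis $\alpha\ge\tfrac12$ is exactly what makes $R_e$ positive semidefinite. Its only nonzero eigenvalues are those of this $2\times 2$ block, namely $\alpha+(1-\alpha)=1$, with eigenvector supported on $\{u,v\}$ having equal coordinates, and $\alpha-(1-\alpha)=2\alpha-1$, with eigenvector supported on $\{u,v\}$ having opposite coordinates; all remaining eigenvalues of $R_e$ are $0$. When $\alpha\ge\tfrac12$ both $1$ and $2\alpha-1$ are nonnegative, hence $R_e$ is positive semidefinite.

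Finally, since $A_\alpha(G)=A_\alpha(G-e)+R_e$ with $R_e$ positive semidefinite, Weyl's inequality (adding a positive semidefinite matrix does not decrease any eigenvalue, the eigenvalues being ordered non-increasingly) yields $\lambda_i(A_\alpha(G))\ge\lambda_i(A_\alpha(G-e))$ for all $1\le i\le n$. Equivalently, one may apply the Courant--Fischer min--max characterization, or Lemma \ref{le2,7} to nested subspaces, using $\langle R_e X,X\rangle\ge 0$ for every vector $X$. I expect no real obstacle beyond the short spectral computation for the $2\times 2$ block; the only point worth stressing is that this is genuinely a place where $\alpha\ge\tfrac12$ is required, since for $\alpha<\tfrac12$ the block $\left(\begin{smallmatrix}\alpha & 1-\alpha \\ 1-\alpha & \alpha\end{smallmatrix}\right)$ is indefinite and the eigenvalue monotonicity can fail.
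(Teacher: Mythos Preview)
Your proof is correct; the decomposition $A_\alpha(G)=A_\alpha(G-e)+R_e$ with $R_e$ having the $2\times 2$ block $\left(\begin{smallmatrix}\alpha & 1-\alpha \\ 1-\alpha & \alpha\end{smallmatrix}\right)$ and eigenvalues $1$ and $2\alpha-1$, followed by Weyl's monotonicity, is exactly the standard argument. Note that the present paper does not prove this lemma at all---it is quoted from \cite{LXS} without proof---so there is no in-paper argument to compare against; your proof is essentially the one given in that reference.
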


\begin{lemma}{\bf (\cite{LDS})}\label{le2,9} %------
Let $G$ be a graph with $n$ vertices and degree sequence $d_1\geq d_2\geq \cdots \geq d_n$. Then
$$\lambda_k(A_{\alpha}(G))\leq \alpha d_k+(1-\alpha)(n-k)\eqno{(2.1)}$$
If equality in $(2.1)$ holds and $0<\alpha<1$, then $G$ has an induced subgraph $H\cong K_{n-k+1}$ such that $d(v_i)=\delta$ for all $v_i\in V(H)$.
\end{lemma}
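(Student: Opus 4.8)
The plan is to prove the inequality $(2.1)$ by restricting the Rayleigh quotient of $A_{\alpha}(G)$ to the subspace spanned by the low-degree vertices, and then to read off the equality case by tracking which of the two inequalities used became tight.

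First I would order the vertices as $v_1,\dots,v_n$ so that $d(v_1)\ge\cdots\ge d(v_n)$, hence $d(v_i)=d_i$, and let $W\subseteq\mathbb{R}^n$ be the span of the standard basis vectors $e_{v_k},\dots,e_{v_n}$, a subspace of dimension $n-k+1$. For $x\in W$ one has $x^{\top}A_{\alpha}(G)x=\alpha\sum_{i=k}^n d_i x_i^2+(1-\alpha)\,x^{\top}A(H)x$, where $H=G[\{v_k,\dots,v_n\}]$; only edges inside $\{v_k,\dots,v_n\}$ contribute to the second term because $x$ vanishes on $v_1,\dots,v_{k-1}$. Since $d_i\le d_k$ on the support of $x$, the first term is at most $\alpha d_k\|x\|^2$, and since $H$ has $n-k+1$ vertices, the elementary bound $\lambda_1(A(H))\le\Delta(H)\le |V(H)|-1=n-k$ gives $x^{\top}A(H)x\le(n-k)\|x\|^2$. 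Hence $x^{\top}A_{\alpha}(G)x\le(\alpha d_k+(1-\alpha)(n-k))\|x\|^2$ for every $x\in W$, and the Courant--Fischer characterization $\lambda_k(A_{\alpha}(G))=\min_{\dim W'=n-k+1}\max_{0\ne x\in W'}\frac{x^{\top}A_{\alpha}(G)x}{x^{\top}x}$ yields $\lambda_k(A_{\alpha}(G))\le\alpha d_k+(1-\alpha)(n-k)$. (Equivalently one may apply Cauchy interlacing to the principal submatrix of $A_{\alpha}(G)$ indexed by $\{v_k,\dots,v_n\}$ and then Weyl's inequality, but the Rayleigh-quotient version keeps the equality discussion in one place.)

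Now suppose $0<\alpha<1$ and equality holds in $(2.1)$. Then $\max_{0\ne x\in W}\frac{x^{\top}A_{\alpha}(G)x}{x^{\top}x}=\alpha d_k+(1-\alpha)(n-k)$, and this maximum is attained (the unit sphere of $W$ is compact), say at a unit vector $y\in W$. Because $\alpha>0$ and $1-\alpha>0$, the equality $\alpha\sum_{i=k}^n d_i y_i^2+(1-\alpha)y^{\top}A(H)y=\alpha d_k+(1-\alpha)(n-k)$ forces both $\sum_{i=k}^n d_i y_i^2=d_k$ and $y^{\top}A(H)y=n-k$ separately. The first, together with $d_i\le d_k$ for $i\ge k$, forces $y_i=0$ whenever $d_i<d_k$; the second, together with $y^{\top}A(H)y\le\lambda_1(A(H))\le n-k$, forces $\lambda_1(A(H))=n-k$ and makes $y|_{V(H)}$ a $\lambda_1(A(H))$-eigenvector.

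Finally I would invoke two standard facts. First, a graph on $m$ vertices whose adjacency matrix has $\lambda_1=m-1$ must be $K_m$: such an $H$ is connected (a disconnected graph has $\lambda_1\le(\text{largest component size})-1<m-1$) and $\Delta(H)$-regular with $\Delta(H)=m-1$, i.e.\ $H=K_m$. So $H\cong K_{n-k+1}$. Second, the Perron eigenvector of the connected graph $K_{n-k+1}$ is the all-ones vector up to scaling, so $y_i\ne 0$ for all $i\in\{k,\dots,n\}$; comparing this with the vanishing of $y_i$ whenever $d_i<d_k$ gives $d_i=d_k$ for every $i\in\{k,\dots,n\}$, and since $d_n=\delta$ is among them, $d_k=\delta$. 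Hence $G$ contains the induced subgraph $H\cong K_{n-k+1}$ on $v_k,\dots,v_n$, all of degree $\delta$ in $G$. I do not anticipate a genuine obstacle; the only delicate points are that $0<\alpha<1$ is precisely what makes both halves of the split forced to be tight, and that the strict positivity of the Perron vector is what upgrades $H$ being complete to every vertex of $H$ having degree $\delta$.
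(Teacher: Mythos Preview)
Your proof is correct. Note, however, that the paper does not prove this lemma at all: it is quoted from \cite{LDS} as a preliminary result, so there is no in-paper argument to compare against. Your Courant--Fischer approach---restricting to the coordinate subspace on the $n-k+1$ vertices of smallest degree, bounding the diagonal part by $\alpha d_k$ and the off-diagonal part by $(1-\alpha)\lambda_1(A(H))\le(1-\alpha)(n-k)$---is the natural one, and the equality analysis is clean: the hypothesis $0<\alpha<1$ is exactly what forces both pieces to be tight simultaneously, $\lambda_1(A(H))=|V(H)|-1$ indeed characterizes $H\cong K_{n-k+1}$, and the strict positivity of the Perron vector of $K_{n-k+1}$ then pins every $d_i$ ($i\ge k$) to $d_k=\delta$. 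Each step is sound.
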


\begin{lemma}{\bf (\cite{CS})}\label{le2,10} %------
Let $G$ be a graph with $n$ vertices and $m\geq 1$ edges. Then
$\lambda_i(Q(G))=\lambda_i(A(\mathcal{L}(G)))+2$, $i=1,2,\ldots,s$, where $s=\min\{n,m\}$.
Further if $m>n$, we have $\lambda_i(A(\mathcal{L}(G)))=-2$ for $i\geq n+1$ and if $n>m$, we have $\lambda_i(Q(G))=0$ for $i\geq m+1$.
\end{lemma}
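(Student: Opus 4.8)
\proof
The plan is to realize both matrices in the statement through the (unoriented) vertex--edge incidence matrix of $G$ and then invoke the standard fact that, for any matrix $M$, the products $MM^{T}$ and $M^{T}M$ have exactly the same nonzero eigenvalues with the same multiplicities. Concretely, I would let $B$ be the $n\times m$ matrix with rows indexed by $V(G)$, columns indexed by $E(G)$, and $B_{ve}=1$ exactly when $v$ is an endpoint of $e$. An entrywise computation then yields
$$BB^{T}=D(G)+A(G)=Q(G)\qquad\text{and}\qquad B^{T}B=2I_{m}+A(\mathcal{L}(G)),$$
the first identity because the $(v,v)$ entry of $BB^{T}$ equals $d_{v}$ and the off-diagonal $(u,v)$ entry equals the number of edges joining $u$ and $v$, and the second because the $(e,f)$ entry of $B^{T}B$ counts the common endpoints of $e$ and $f$, which is $2$ when $e=f$, $1$ when $e$ and $f$ are distinct adjacent edges, and $0$ otherwise.

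Next, writing $r$ for the rank of $B$, so that $r\leq\min\{n,m\}=s$, I would record that $Q(G)=BB^{T}$ is positive semidefinite with eigenvalues $\mu_{1}\geq\cdots\geq\mu_{r}>0=\mu_{r+1}=\cdots=\mu_{n}$, while $B^{T}B$ has the same positive eigenvalues $\mu_{1}\geq\cdots\geq\mu_{r}$ together with $m-r$ zeros. Subtracting $2I_{m}$ and using $A(\mathcal{L}(G))=B^{T}B-2I_{m}$, the eigenvalues of $A(\mathcal{L}(G))$ in nonincreasing order are $\mu_{1}-2\geq\cdots\geq\mu_{r}-2$, followed by $-2$ repeated $m-r$ times; here the first $r$ values lie strictly above $-2$ precisely because $\mu_{i}>0$ for $i\leq r$.

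Finally, I would read off the stated relations. For $1\leq i\leq r$ one has $\lambda_{i}(A(\mathcal{L}(G)))=\mu_{i}-2=\lambda_{i}(Q(G))-2$, and for $r<i\leq s$ both $\lambda_{i}(Q(G))$ and $\lambda_{i}(A(\mathcal{L}(G)))+2$ equal $0$, so $\lambda_{i}(A(\mathcal{L}(G)))=\lambda_{i}(Q(G))-2$ for all $1\leq i\leq s$. If $m>n$ then $r\leq n$, so $i\geq n+1$ forces $i>r$ and hence $\lambda_{i}(A(\mathcal{L}(G)))=-2$; if $n>m$ then $r\leq m$, so $i\geq m+1$ forces $i>r$ and hence $\lambda_{i}(Q(G))=0$. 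The argument has no real obstacle; the only point demanding care is the bookkeeping of the zero eigenvalues---namely that $Q(G)$ carries $n-r$ of them while $A(\mathcal{L}(G))+2I_{m}$ carries $m-r$, a discrepancy reconciled by $r\leq\min\{n,m\}$---together with checking that the index ranges in the three assertions match the position $r$ of the last positive eigenvalue.
\qed
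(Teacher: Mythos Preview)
Your proof is correct and is exactly the standard argument for this classical fact. Note, however, that the paper does not supply its own proof of this lemma: it is quoted without proof from \cite{CS}, so there is no in-paper argument to compare against. The incidence-matrix factorization $Q(G)=BB^{T}$, $A(\mathcal{L}(G))+2I_{m}=B^{T}B$ that you use is precisely the mechanism behind the result in the cited reference, and your bookkeeping of the rank $r$ and the placement of the zero eigenvalues is accurate.
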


\begin{lemma}{\bf (\cite{CLWM})}\label{le2,11} %------
For any $K_3$-free and $C_4$-free graph $G$, $A(G^2)=A^2(G)-L(G)$.
\end{lemma}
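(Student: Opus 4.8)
The plan is to prove the matrix identity entrywise. Set $B=A^2(G)-L(G)=A^2(G)-D(G)+A(G)$ and compare $B_{uv}$ with $A(G^2)_{uv}$ for every ordered pair $u,v\in V(G)$. The two combinatorial facts I will lean on are standard: $(A^2(G))_{uv}$ counts walks of length $2$ from $u$ to $v$, so it equals $d_G(u)$ when $u=v$ and $|N_G(u)\cap N_G(v)|$ when $u\neq v$; and, by definition of the square, $A(G^2)_{uv}=1$ exactly when $u\neq v$ and $1\le d_G(u,v)\le 2$, and $A(G^2)_{uu}=0$.

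First I would dispose of the diagonal, where no hypothesis is needed: $B_{uu}=(A^2(G))_{uu}-d_G(u)+0=d_G(u)-d_G(u)=0=A(G^2)_{uu}$. Then, for $u\neq v$, I would split into three cases according to $d_G(u,v)$. If $u\sim v$ in $G$, then $A_{uv}=1$, and since $G$ is $K_3$-free, $u$ and $v$ have no common neighbour, so $(A^2(G))_{uv}=0$ and hence $B_{uv}=0+1=1=A(G^2)_{uv}$. If $d_G(u,v)=2$, then $A_{uv}=0$ and $u,v$ have at least one common neighbour; since $G$ is $C_4$-free they cannot have two distinct common neighbours $w_1\neq w_2$ (these would yield the $4$-cycle $u\,w_1\,v\,w_2\,u$), so the number of common neighbours is exactly $1$, giving $B_{uv}=1+0=1=A(G^2)_{uv}$. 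Finally, if $d_G(u,v)\ge 3$ (this subsumes the case that $u,v$ lie in different components), then $u\not\sim v$ and they share no common neighbour, so $B_{uv}=0+0=0=A(G^2)_{uv}$. In all cases the entries agree, which proves $A(G^2)=A^2(G)-L(G)$.

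The only place the two structural hypotheses are used is in pinning down $|N_G(u)\cap N_G(v)|$: $K_3$-freeness forces it to be $0$ on edges of $G$, and $C_4$-freeness forces it to be exactly $1$ on pairs at distance $2$. This is the crux of the argument, and it is the point I would expect a careful reader to want made explicit; the remainder is pure bookkeeping, so I do not anticipate a genuine obstacle beyond organising the case analysis cleanly.
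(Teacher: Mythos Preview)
Your proof is correct. The paper does not supply its own proof of this lemma; it is quoted from \cite{CLWM} as a preliminary result, so there is no in-paper argument to compare against. Your entrywise case analysis is the natural way to establish the identity, and you have correctly identified where each hypothesis enters: $K_3$-freeness forces $|N_G(u)\cap N_G(v)|=0$ for adjacent $u,v$, and $C_4$-freeness forces $|N_G(u)\cap N_G(v)|=1$ for pairs at distance exactly~$2$.
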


\begin{lemma}\label{le2,12} %------
If $1\leq k \leq n$, then $\sum\limits_{i=1}^{k}\cos \frac{i\pi}{n}=\frac{1}{2}\csc\frac{\pi}{2n} \sin\frac{(2k+1)\pi}{2n}-\frac{1}{2}$.
\end{lemma}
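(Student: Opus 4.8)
The plan is to prove the trigonometric identity $\sum_{i=1}^{k}\cos\frac{i\pi}{n}=\frac12\csc\frac{\pi}{2n}\sin\frac{(2k+1)\pi}{2n}-\frac12$ by the standard telescoping trick for sums of cosines: multiply each term by $2\sin\frac{\pi}{2n}$ and use the product-to-sum formula $2\sin x\cos y=\sin(x+y)-\sin(y-x)$. First I would set $\theta=\frac{\pi}{n}$, so the sum becomes $\sum_{i=1}^{k}\cos(i\theta)$, and multiply through by $2\sin\frac{\theta}{2}$. Applying the identity $2\sin\frac{\theta}{2}\cos(i\theta)=\sin\!\left(i\theta+\frac{\theta}{2}\right)-\sin\!\left(i\theta-\frac{\theta}{2}\right)=\sin\frac{(2i+1)\theta}{2}-\sin\frac{(2i-1)\theta}{2}$ to each summand produces a telescoping sum.

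After telescoping, $2\sin\frac{\theta}{2}\sum_{i=1}^{k}\cos(i\theta)=\sin\frac{(2k+1)\theta}{2}-\sin\frac{\theta}{2}$. Dividing both sides by $2\sin\frac{\theta}{2}$ (which is nonzero for $1\le k\le n$, i.e. $0<\theta\le\pi$, so $0<\frac{\theta}{2}\le\frac{\pi}{2}$) gives
$$\sum_{i=1}^{k}\cos(i\theta)=\frac{\sin\frac{(2k+1)\theta}{2}}{2\sin\frac{\theta}{2}}-\frac12=\frac12\csc\frac{\theta}{2}\sin\frac{(2k+1)\theta}{2}-\frac12.$$
Substituting back $\theta=\frac{\pi}{n}$ yields $\frac{\theta}{2}=\frac{\pi}{2n}$ and $\frac{(2k+1)\theta}{2}=\frac{(2k+1)\pi}{2n}$, which is exactly the claimed formula.

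There is essentially no obstacle here: the only point requiring a word of care is that the division by $2\sin\frac{\pi}{2n}$ is legitimate, which holds precisely because the hypothesis $1\le k\le n$ (more simply, $n\ge 1$) forces $0<\frac{\pi}{2n}\le\frac{\pi}{2}$, so $\sin\frac{\pi}{2n}>0$. The identity itself does not actually use the upper bound $k\le n$; it is stated that way only because this is the range in which it will be applied (presumably to evaluate $S_k(A_\alpha(P_n))$ via the known path eigenvalues). One may optionally remark that for $k=n$ the right-hand side collapses to $-\frac12-\frac12\cdot(\text{something})$ consistent with $\sum_{i=1}^{n}\cos\frac{i\pi}{n}=-1$ when $n$ is such that the terms pair up, providing a quick sanity check.
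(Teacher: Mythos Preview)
Your proof is correct. The telescoping argument via the product-to-sum identity $2\sin\frac{\theta}{2}\cos(i\theta)=\sin\frac{(2i+1)\theta}{2}-\sin\frac{(2i-1)\theta}{2}$ is the standard derivation of this Dirichlet-kernel-type formula, and your justification that $\sin\frac{\pi}{2n}>0$ covers the only nontrivial step.

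The paper takes a slightly different route: it begins by writing the sum as $\frac{(1+\cos\frac{\pi}{n})\sin\frac{k\pi}{n}}{2\sin\frac{\pi}{n}}+\frac{1}{2}\cos\frac{k\pi}{n}-\frac{1}{2}$ (apparently quoting a known closed form for $\sum_{i=1}^k\cos(i\theta)$ without derivation), then simplifies using $1+\cos\theta=2\cos^2\frac{\theta}{2}$ and the angle-addition formula to reach the stated expression. Your approach is more self-contained since it does not presuppose that closed form; the paper's approach is shorter if one is willing to cite it. Both are entirely standard and arrive at the same identity.

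One minor remark: your closing sanity check at $k=n$ is a bit vague. In fact $\sin\frac{(2n+1)\pi}{2n}=\sin\!\bigl(\pi+\frac{\pi}{2n}\bigr)=-\sin\frac{\pi}{2n}$, so the right-hand side equals $-1$ for every $n\geq 1$, matching $\sum_{i=1}^n\cos\frac{i\pi}{n}=-1$; there is no need for the qualifier ``when $n$ is such that the terms pair up.''
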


\begin{proof} For $1\leq k \leq n$, we have
\begin{eqnarray*}
\sum\limits_{i=1}^{k}\cos \frac{i\pi}{n} & = & \frac{\left(1+\cos \frac{\pi}{n}\right)\sin\frac{k\pi}{n}}{2\sin\frac{\pi}{n}}+\frac{1}{2}\cos\frac{k\pi}{n}-\frac{1}{2}\\
& = & \frac{1}{2}\cot\frac{\pi}{2n} \sin\frac{k\pi}{n} +\frac{1}{2}\cos\frac{k\pi}{n}-\frac{1}{2}\\
& = & \frac{1}{2}\csc\frac{\pi}{2n} \sin\frac{(2k+1)\pi}{2n}-\frac{1}{2}.
\end{eqnarray*}
The proof is completed. $\Box$
\end{proof}

\begin{lemma}\label{le2,13} %------
If $0\leq \beta<\alpha \leq 1$ and $G$ is a graph with $n$ vertices, then
$$S_{k}(A_{\beta}(G))\leq S_{k}(A_{\alpha}(G))$$
for $1\leq k \leq n$. If $G$ is connected, then inequality is strict, unless $k=1$ and $G$ is regular.
\end{lemma}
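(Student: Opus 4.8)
The plan is to write $A_{\alpha}(G) = A_{\beta}(G) + (\alpha - \beta)L(G)$, since $A_{\alpha}(G) = \alpha D + (1-\alpha)A$ and $A_{\beta}(G) = \beta D + (1-\beta)A$, so their difference is $(\alpha-\beta)(D - A) = (\alpha-\beta)L(G)$. Because $L(G)$ is positive semidefinite and $\alpha - \beta > 0$, the matrix $(\alpha-\beta)L(G)$ is positive semidefinite, hence all its eigenvalues are nonnegative. First I would apply Fan's theorem (Theorem \ref{th1,1}) with $M = A_{\beta}(G)$ and $N = (\alpha-\beta)L(G)$ to get
$$S_k(A_{\alpha}(G)) \leq S_k(A_{\beta}(G)) + (\alpha-\beta)S_k(L(G)).$$
Actually for the inequality direction I want, the cleaner route is to use Lemma \ref{le2,7} (Ky Fan's variational characterization). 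Let $\{X_1,\dots,X_k\}$ be the orthonormal $k$-tuple attaining $S_k(A_{\beta}(G)) = \sum_{i=1}^k \langle A_{\beta}(G)X_i, X_i\rangle$. Then
$$S_k(A_{\alpha}(G)) \geq \sum_{i=1}^k \langle A_{\alpha}(G)X_i, X_i\rangle = \sum_{i=1}^k \langle A_{\beta}(G)X_i, X_i\rangle + (\alpha-\beta)\sum_{i=1}^k \langle L(G)X_i, X_i\rangle = S_k(A_{\beta}(G)) + (\alpha-\beta)\sum_{i=1}^k \langle L(G)X_i, X_i\rangle,$$
and the last sum is $\geq 0$ since $L(G)\succeq 0$. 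This gives the weak inequality immediately.

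For the strictness claim when $G$ is connected and $(k,G)$ is not the trivial exceptional case, I would argue by contradiction: equality forces $\sum_{i=1}^k \langle L(G)X_i, X_i\rangle = 0$, so each $X_i$ lies in the kernel of $L(G)$. For a connected graph, $\ker L(G)$ is one-dimensional, spanned by the all-ones vector $\mathbf{1}$ (assuming we work over $\mathbb{R}$; over $\mathbb{C}$ it is spanned by $\mathbf{1}$ as well). Since the $X_i$ are orthonormal, we cannot have two or more of them all proportional to $\mathbf{1}$; hence $k = 1$ and $X_1 = \mathbf{1}/\sqrt{n}$. But then $X_1$ must simultaneously be a top eigenvector of $A_{\alpha}(G)$ (to realize $S_1(A_{\alpha}(G)) = \langle A_{\alpha}(G)X_1, X_1\rangle$) — the Ky Fan maximum at $k=1$ is $\lambda_1$, attained only by unit eigenvectors for $\lambda_1$. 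So $\mathbf{1}$ is an eigenvector of $A_{\alpha}(G) = A_{\beta}(G) + (\alpha-\beta)L(G)$; since $L(G)\mathbf{1} = 0$, this means $\mathbf{1}$ is an eigenvector of $A_{\beta}(G)$, equivalently an eigenvector of $A(G)$, which for a connected graph happens precisely when $G$ is regular. Conversely, if $G$ is regular of degree $r$ then $A_{\alpha}(G) = \alpha r I + (1-\alpha)A(G)$ for every $\alpha$, and at $k=1$ both sides equal $\alpha r + (1-\alpha)\lambda_1(A(G))$... wait, these are not equal for different $\alpha$ unless $\lambda_1(A(G)) = r$; but for connected regular $G$ indeed $\lambda_1(A(G)) = r = d_1 = \cdots$, so $S_1(A_{\alpha}(G)) = r$ for all $\alpha$, confirming equality holds exactly in the stated exceptional case.

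I would present the above cleanly: (i) the matrix identity $A_{\alpha}(G) - A_{\beta}(G) = (\alpha-\beta)L(G)$ and positive semidefiniteness of $L(G)$; (ii) invoke Lemma \ref{le2,7} with the optimal $k$-tuple for $A_{\beta}(G)$ to get $S_k(A_{\alpha}(G)) \geq S_k(A_{\beta}(G))$; (iii) trace through the equality case using $\dim\ker L(G) = 1$ for connected $G$ to conclude $k=1$, $X_1 \parallel \mathbf{1}$, hence $\mathbf{1}$ is a Perron eigenvector, hence $G$ regular; (iv) note the converse. The one subtlety I expect to be the main obstacle is handling the complex setting of Lemma \ref{le2,7} carefully: $\ker L(G)$ over $\mathbb{C}$ is still one-dimensional (spanned by $\mathbf{1}$) for connected $G$ since $L(G)$ is a real symmetric matrix with simple eigenvalue $0$, and $\langle L(G)X, X\rangle = 0$ with $L(G)\succeq 0$ still forces $L(G)X = 0$; so this causes no real trouble, but it must be said rather than assumed. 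The rest is routine.
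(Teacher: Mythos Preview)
Your proof is correct. The paper's own proof is a one-line citation: it invokes Proposition~4 of Nikiforov~\cite{N}, which gives the eigenvalue-wise monotonicity $\lambda_k(A_{\beta}(G)) \le \lambda_k(A_{\alpha}(G))$ for each $k$, and then sums. Both approaches rest on the same identity $A_{\alpha}(G) - A_{\beta}(G) = (\alpha - \beta)L(G)$ and the positive semidefiniteness of $L(G)$; the paper (via Nikiforov) passes through Weyl's inequality applied eigenvalue by eigenvalue, whereas you go directly to the partial sums using the Ky~Fan variational characterization of Lemma~\ref{le2,7}. Your route has a genuine advantage: the paper's proof does not actually address the strictness clause at all (it simply asserts ``the proof follows''), while your argument via $\ker L(G)$ being one-dimensional for connected $G$ gives a clean and complete treatment of the equality case. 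One small cleanup: in the equality analysis you only need that $X_1$ is a top eigenvector of $A_{\beta}(G)$ (this is how the tuple was chosen), not of $A_{\alpha}(G)$; that already forces $\mathbf{1}$ to be an eigenvector of $A_{\beta}(G)$ and hence $G$ regular, so the detour through $A_{\alpha}(G)$ is unnecessary.
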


\begin{proof} If $0\leq \beta<\alpha \leq 1$, from Proposition 4 in \cite{N}, then $\lambda_{k}(A_{\beta}(G))\leq \lambda_{k}(A_{\alpha}(G))$ for $1\leq k \leq n$. Thus $S_{k}(A_{\beta}(G))\leq S_{k}(A_{\alpha}(G))$, and the proof follows. $\Box$
\end{proof}

\section{\large Upper bounds on the sum of the largest $A_{\alpha}$-eigenvalues in terms of vertex degrees}

Nikiforov \cite{N} showed that $A_{\alpha}(G)$ is a positive semi-definite matrix for $\frac{1}{2}\leq \alpha <1$. Further, $G$ has no isolated vertices, then $A_{\alpha}(G)$ is positive definite. Let $\alpha_0(G)$ be the smallest $\alpha$ such that $A_{\alpha}(G)$ is positive semidefinite for $\alpha_0(G)\leq \alpha \leq 1$. Nikiforov and Rojo \cite{NR} found $\alpha_0(G)$ if $G$ is regular or $G$ contains a bipartite component and given a lower bound on $\alpha_0(G)$ of $\chi$-colorable graphs.

\begin{theorem}\label{th3,1} %------
Let $G\neq K_n$ be a graph with $n$ vertices and maximum degree $\Delta$.

{\normalfont (i)} If $0\leq \alpha< \frac{1}{\Delta+1}$, then
$S_k(A_{\alpha}(G))\leq \frac{(1-\alpha)n}{2}(1+\sqrt{k})$
for $2\leq k \leq n$.

{\normalfont (ii)} If $\frac{1}{\Delta+1}\leq \alpha< 1$, then
$S_k(A_{\alpha}(G))\leq \frac{\alpha \Delta n}{2}(1+\sqrt{k})$
for $2\leq k \leq n$.
\end{theorem}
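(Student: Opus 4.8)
The plan is to apply Mohar's bound, Lemma \ref{le2,1}, directly to the matrix $A_{\alpha}(G)$, after pinning down the smallest interval $[a,b]$ containing all of its entries. Write $A_{\alpha}(G)=\alpha D(G)+(1-\alpha)A(G)$. Its diagonal entries are $\alpha d_v$ for $v\in V(G)$, which lie in $[0,\alpha\Delta]$, while each off-diagonal entry equals $1-\alpha$ on an edge and $0$ on a non-edge. Since $G\neq K_n$, some off-diagonal entry is $0$, and since $\alpha<1$ we have $1-\alpha>0$; hence every entry of $A_{\alpha}(G)$ lies in $[0,\beta]$ with $\beta=\max\{\alpha\Delta,\,1-\alpha\}$, and this is the smallest such interval. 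In the notation of Lemma \ref{le2,1} this says $A_{\alpha}(G)\in\mathcal{S}_n^{0,\beta}$, so that the additive term $\max\{0,a\}$ equals $0$.

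It then remains only to decide which of $\alpha\Delta$ and $1-\alpha$ is larger, and this is exactly the threshold $\alpha=\tfrac{1}{\Delta+1}$: multiplying through by $\Delta+1>0$ gives the equivalence $\alpha<\tfrac{1}{\Delta+1}\iff\alpha\Delta<1-\alpha$, and likewise with $\ge$. In case (i), $\alpha<\tfrac{1}{\Delta+1}$ forces $\beta=1-\alpha$, and Lemma \ref{le2,1} with $a=0$, $b=1-\alpha$ (valid since $0\le\alpha<1$ makes $b>a$) yields $S_k(A_{\alpha}(G))\le\tfrac{(1-\alpha)n}{2}(1+\sqrt{k})$ for $2\le k\le n$. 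In case (ii), $\tfrac{1}{\Delta+1}\le\alpha<1$ forces $\beta=\alpha\Delta$; this range of $\alpha$ is nonempty only when $\Delta\ge 1$ (otherwise $\tfrac{1}{\Delta+1}=1$) and then $\alpha>0$, so $b=\alpha\Delta>0=a$ and Lemma \ref{le2,1} gives $S_k(A_{\alpha}(G))\le\tfrac{\alpha\Delta n}{2}(1+\sqrt{k})$ for $2\le k\le n$.

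There is essentially no hard step: the argument just reads off the entrywise bounds on $A_{\alpha}(G)$ and invokes Lemma \ref{le2,1}. The only points needing a little care are (a) verifying $a<b$ so that $\mathcal{S}_n^{a,b}$ is well defined, which is checked above in both cases, and (b) the role of the hypothesis $G\neq K_n$, which guarantees that $0$ actually occurs off the diagonal so that $a=0$ is the genuine minimum entry; if one only wants the displayed inequality, $a=0$ is a legitimate lower bound for the entries of any $A_{\alpha}(G)$ with $\alpha\ge 0$, but excluding $K_n$ is what makes the chosen interval tight. Finally, it is worth recording that both bounds appear without an additive constant precisely because the choice $a=0$ kills the $\max\{0,a\}$ term in Lemma \ref{le2,1}.
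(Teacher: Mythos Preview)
Your proof is correct and follows exactly the same approach as the paper: apply Lemma~\ref{le2,1} to $A_{\alpha}(G)$ with $a=0$ and $b=\max\{\alpha\Delta,1-\alpha\}$, splitting into cases at the threshold $\alpha=\tfrac{1}{\Delta+1}$. You simply supply more of the surrounding detail (the entrywise description of $A_{\alpha}(G)$, the verification that $a<b$, and the observation that the hypothesis $G\neq K_n$ is only needed for tightness of the interval, not for the inequality itself) than the paper's very terse argument.
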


\begin{proof} In this proof we use Lemma \ref{le2,1} with $a=0$ and $b=1-\alpha$ for $0\leq \alpha< \frac{1}{\Delta+1}$. Then
$$S_k(A_{\alpha}(G))\leq \frac{(1-\alpha)n}{2}(1+\sqrt{k}).$$
By a similar reasoning as above, the second part of the theorem follows. $\Box$
\end{proof}

\begin{theorem}\label{th3,2} %------
Let $\frac{1}{2}\leq \alpha <1$ and $G$ be a graph with $n$ vertices and $m$ edges. If $1\leq k\leq n-1$, then
$$S_{k}(A_{\alpha}(G))\leq \frac{2\alpha km}{n}+\sqrt{\frac{k(n-k)}{n}\left(\alpha^2Z_1+2m(1-\alpha)^2-\frac{4\alpha^2m^2}{n}\right)}.$$
\end{theorem}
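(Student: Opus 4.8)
The plan is to apply Lemma~\ref{le2,2} to the matrix $M=A_\alpha(G)$, which is legitimate precisely because $\frac{1}{2}\le\alpha<1$ guarantees (by Nikiforov) that $A_\alpha(G)$ is positive semidefinite, so all its eigenvalues are nonnegative. Lemma~\ref{le2,2} then gives
$$S_k(A_\alpha(G))\le \frac{k\,\mathrm{tr}(A_\alpha(G))}{n}+\sqrt{\frac{k(n-k)}{n}\,f(A_\alpha(G))},$$
so the entire proof reduces to computing the two quantities $\mathrm{tr}(A_\alpha(G))$ and $f(A_\alpha(G))=\sum_{i,j}(A_\alpha(G))_{ij}(A_\alpha(G))_{ji}-\frac{(\mathrm{tr}(A_\alpha(G)))^2}{n}$ in closed form.

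First I would record that $\mathrm{tr}(A_\alpha(G))=\alpha\sum_v d_v=2\alpha m$, since $A(G)$ has zero diagonal and $D(G)$ contributes the degrees. This immediately identifies the first term as $\frac{2\alpha km}{n}$, matching the claimed bound. Next I would compute $\sum_{i,j}(A_\alpha(G))_{ij}^2=\mathrm{tr}(A_\alpha(G)^2)$ (using symmetry, $(A_\alpha)_{ij}=(A_\alpha)_{ji}$). Writing $A_\alpha(G)=\alpha D+(1-\alpha)A$, the diagonal entries are $\alpha d_v$ and the off-diagonal entries are $(1-\alpha)$ on edges and $0$ otherwise. Hence
$$\mathrm{tr}(A_\alpha(G)^2)=\sum_v \alpha^2 d_v^2+\sum_{i\ne j}(A_\alpha)_{ij}^2=\alpha^2 Z_1+2m(1-\alpha)^2,$$
since there are $2m$ ordered pairs $(i,j)$ with $ij\in E(G)$, each contributing $(1-\alpha)^2$. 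Subtracting $\frac{(2\alpha m)^2}{n}=\frac{4\alpha^2 m^2}{n}$ gives exactly $f(A_\alpha(G))=\alpha^2 Z_1+2m(1-\alpha)^2-\frac{4\alpha^2 m^2}{n}$, and plugging both pieces into Lemma~\ref{le2,2} yields the stated inequality.

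There is essentially no hard step here: the only thing to be careful about is the hypothesis check, namely that Lemma~\ref{le2,2} requires nonnegative eigenvalues and that this is exactly why the range $\frac{1}{2}\le\alpha<1$ appears (for $\alpha<\frac12$ one cannot guarantee positive semidefiniteness, e.g.\ $A_0(G)=A(G)$ has negative eigenvalues whenever $G$ has an edge). I would also note that the quantity $f(A_\alpha(G))$ is automatically nonnegative — it equals $n\cdot\mathrm{Var}$ of the eigenvalues times a constant — which is reassuring for the square root to make sense, though this follows a posteriori from the identity and need not be argued separately. The proof is then complete: state the trace, state $\mathrm{tr}(A_\alpha(G)^2)$, substitute into Lemma~\ref{le2,2}, and close with $\Box$.
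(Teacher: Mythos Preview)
Your proposal is correct and follows exactly the same approach as the paper: apply Lemma~\ref{le2,2} to $A_\alpha(G)$, invoking positive semidefiniteness for $\frac12\le\alpha<1$, and compute $\mathrm{tr}(A_\alpha(G))=2\alpha m$ and $\sum_{i,k}a_{ik}a_{ki}=\alpha^2 Z_1+2m(1-\alpha)^2$. The paper's own proof is a one-line version of precisely this computation.
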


\begin{proof} Since $tr(A_{\alpha}(G))=2\alpha m$, $\sum\limits_{i=1}^{n}\sum\limits_{k=1}^{n}a_{ik}a_{ki}=\alpha^2Z_1+2m(1-\alpha)^2$ and $A_{\alpha}(G)$ is a positive semi-definite matrix for $\frac{1}{2}\leq \alpha <1$, by Lemma \ref{le2,2}, we have the proof. $\Box$

\end{proof}

The following result is direct corollary of Lemma \ref{le2,3} and Theorem \ref{th3,2}.

\begin{corollary}\label{cor3,1} %------
Let $\frac{1}{2}\leq \alpha <1$ and $G$ be a graph with $n$ vertices and $m$ edges. If $1\leq k\leq n-1$, then
$$S_k(A_{\alpha}(G))\leq \frac{2\alpha km}{n}+\sqrt{\frac{k(n-k)}{n}\left(2m(1-\alpha)^2+\frac{\alpha^2n}{4}(\Delta-\delta)^2\right)}.$$
\end{corollary}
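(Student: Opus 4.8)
The plan is to derive Corollary \ref{cor3,1} directly from Theorem \ref{th3,2} by substituting the upper bound on the first Zagreb index supplied by Lemma \ref{le2,3}. The quantity appearing under the square root in Theorem \ref{th3,2} is
\[
\frac{k(n-k)}{n}\left(\alpha^2 Z_1 + 2m(1-\alpha)^2 - \frac{4\alpha^2 m^2}{n}\right),
\]
so the only term that needs to be controlled is $\alpha^2 Z_1$. Since $\alpha \geq \tfrac12 > 0$, the coefficient $\alpha^2$ is nonnegative, and hence the expression is monotone nondecreasing in $Z_1$; replacing $Z_1$ by any upper bound yields a valid (weaker) upper bound on $S_k(A_\alpha(G))$.

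The key step is then to invoke the right-hand inequality of Lemma \ref{le2,3}, namely $Z_1(G) \leq \frac{4m^2}{n} + \frac{n}{4}(\Delta-\delta)^2$. Substituting this gives
\[
\alpha^2 Z_1 \leq \frac{4\alpha^2 m^2}{n} + \frac{\alpha^2 n}{4}(\Delta-\delta)^2,
\]
and so
\[
\alpha^2 Z_1 + 2m(1-\alpha)^2 - \frac{4\alpha^2 m^2}{n} \leq 2m(1-\alpha)^2 + \frac{\alpha^2 n}{4}(\Delta-\delta)^2.
\]
Notice that the $-\frac{4\alpha^2 m^2}{n}$ term from the trace normalization cancels exactly against the $\frac{4\alpha^2 m^2}{n}$ coming from the Zagreb bound, which is what makes the final form clean. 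Plugging this into the conclusion of Theorem \ref{th3,2} and keeping the leading term $\frac{2\alpha km}{n}$ unchanged produces precisely the stated inequality
\[
S_k(A_\alpha(G)) \leq \frac{2\alpha km}{n} + \sqrt{\frac{k(n-k)}{n}\left(2m(1-\alpha)^2 + \frac{\alpha^2 n}{4}(\Delta-\delta)^2\right)}.
\]

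There is essentially no obstacle here: the result is a one-line corollary, and the only things to check are that $\alpha^2 \ge 0$ (so monotonicity in $Z_1$ is legitimate) and that the hypotheses of Theorem \ref{th3,2} and Lemma \ref{le2,3} match — both require a graph with $n$ vertices and $m$ edges, and Theorem \ref{th3,2} additionally needs $\tfrac12 \le \alpha < 1$ and $1 \le k \le n-1$, all of which are assumed in the statement. One could also remark that using instead the lower bound $Z_1 \ge \frac{4m^2}{n} + \frac12(\Delta-\delta)^2$ from Lemma \ref{le2,3} does \emph{not} yield an upper bound on $S_k$, so it is the upper Zagreb estimate that is the relevant one; no case analysis or extremal-graph discussion is needed for this corollary.
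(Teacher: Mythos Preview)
Your proof is correct and follows exactly the same approach as the paper, which simply states that the corollary is a direct consequence of Lemma~\ref{le2,3} and Theorem~\ref{th3,2}. You have merely spelled out the cancellation of the $\frac{4\alpha^2 m^2}{n}$ terms that the paper leaves implicit.
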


\begin{theorem}\label{th3,3} %------
Let $\frac{1}{2}< \alpha <1$ and $G$ be a graph with $n$ vertices. If $1\leq k\leq n-1$ and $G$ has no isolated vertices, then
$$S_{k}(A_{\alpha}(G))\leq 2\alpha m-(n-k)\left(\frac{\det(A_{\alpha}(G))}{\lambda_1(A_{\alpha}(G)\lambda_2^{k-1}(A_{\alpha}(G))}\right)^{\frac{1}{n-k}}\eqno{(3.1)}$$
with equality if and only if $\lambda_2(A_{\alpha}(G)=\cdots=\lambda_k(A_{\alpha}(G)$ and $\lambda_{k+1}(A_{\alpha}(G)=\cdots=\lambda_n(A_{\alpha}(G)$.
\end{theorem}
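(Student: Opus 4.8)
The plan is to convert the desired upper bound on $S_k(A_\alpha(G))$ into a \emph{lower} bound on the sum of the $n-k$ smallest $A_\alpha$-eigenvalues, and then estimate that sum from below by the arithmetic--geometric mean inequality, using the two global invariants $tr(A_\alpha(G))=2\alpha m$ and $\det(A_\alpha(G))=\prod_{i=1}^{n}\lambda_i(A_\alpha(G))$.

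First I would record that, since $\frac12<\alpha<1$ and $G$ has no isolated vertices, $A_\alpha(G)$ is positive definite (as noted at the start of this section), so its eigenvalues $\lambda_1\geq\lambda_2\geq\cdots\geq\lambda_n$ are all strictly positive and $\det(A_\alpha(G))>0$; in particular the quantity raised to the power $\tfrac{1}{n-k}$ on the right-hand side of $(3.1)$ is well defined. Writing $\lambda_i:=\lambda_i(A_\alpha(G))$, the identity $\sum_{i=1}^{n}\lambda_i=tr(A_\alpha(G))=2\alpha m$ gives
$$S_k(A_\alpha(G))=\sum_{i=1}^{k}\lambda_i=2\alpha m-\sum_{i=k+1}^{n}\lambda_i,$$
so it suffices to show $\sum_{i=k+1}^{n}\lambda_i\geq (n-k)(\det(A_\alpha(G))/(\lambda_1\lambda_2^{k-1}))^{1/(n-k)}$.

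For this I would apply AM--GM to the positive numbers $\lambda_{k+1},\ldots,\lambda_n$, obtaining
$$\sum_{i=k+1}^{n}\lambda_i\geq (n-k)\left(\prod_{i=k+1}^{n}\lambda_i\right)^{\frac{1}{n-k}}=(n-k)\left(\frac{\det(A_\alpha(G))}{\prod_{i=1}^{k}\lambda_i}\right)^{\frac{1}{n-k}},$$
and then bound the denominator: from $\lambda_2\geq\lambda_3\geq\cdots\geq\lambda_k$ one gets $\prod_{i=2}^{k}\lambda_i\leq\lambda_2^{k-1}$, hence $\prod_{i=1}^{k}\lambda_i\leq\lambda_1\lambda_2^{k-1}$, and since $t\mapsto t^{1/(n-k)}$ is increasing on $(0,\infty)$ the claimed lower bound follows. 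Combining it with the displayed identity yields $(3.1)$.

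Finally, for the equality discussion I would trace back through the two inequalities used: equality in AM--GM forces $\lambda_{k+1}=\cdots=\lambda_n$, and equality in $\prod_{i=2}^{k}\lambda_i\leq\lambda_2^{k-1}$ forces $\lambda_2=\lambda_3=\cdots=\lambda_k$ (a condition that is vacuous, hence automatically satisfied, when $k=1$, in which case $\prod_{i=2}^{k}\lambda_i$ is an empty product and $\lambda_2^{k-1}=1$); conversely, these two conditions make both steps equalities, giving the stated characterization. There is no serious obstacle here, since the argument is a short chain of elementary inequalities; the only points that need care are invoking positive definiteness exactly where it is used (to divide by $\prod_{i=1}^{k}\lambda_i$ and to legitimize AM--GM) and handling the $k=1$ boundary case cleanly in the equality analysis.
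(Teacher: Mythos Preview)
Your proposal is correct and follows essentially the same approach as the paper: both rewrite $S_k(A_\alpha(G))=2\alpha m-\sum_{i=k+1}^n\lambda_i$, apply AM--GM to the tail sum, and then bound $\prod_{i=1}^k\lambda_i\le\lambda_1\lambda_2^{k-1}$. Your treatment is in fact slightly more careful in that you explicitly address the $k=1$ boundary case in the equality analysis, which the paper leaves implicit.
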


\begin{proof} Since $\frac{1}{2}< \alpha <1$ and $G$ has no isolated vertices, we know that $A_{\alpha}(G)$ is positive definite. By the arithmetic-geometric mean inequality, we have
\begin{eqnarray*}
S_{k}(A_{\alpha}(G)) & = & \lambda_1(A_{\alpha}(G))+\lambda_2(A_{\alpha}(G))+\cdots+\lambda_k(A_{\alpha}(G))\\
& = & 2\alpha m-(\lambda_{k+1}(A_{\alpha}(G))+\lambda_{k+2}(A_{\alpha}(G))+\cdots+\lambda_n(A_{\alpha}(G)))\\
& \leq & 2\alpha m-(n-k)\left(\prod\limits_{i=k+1}^{n}\lambda_i(A_{\alpha}(G))\right)^{\frac{1}{n-k}}\\
& = & 2\alpha m-(n-k)\left(\frac{\det(A_{\alpha}(G))}{\prod\limits_{i=1}^{k}\lambda_i(A_{\alpha}(G))}\right)^{\frac{1}{n-k}}\\
& \leq & 2\alpha m-(n-k)\left(\frac{\det(A_{\alpha}(G))}{\lambda_1(A_{\alpha}(G)\lambda_2^{k-1}(A_{\alpha}(G))}\right)^{\frac{1}{n-k}}
\end{eqnarray*}
with equality if and only if $\lambda_2(A_{\alpha}(G)=\cdots=\lambda_k(A_{\alpha}(G)$ and $\lambda_{k+1}(A_{\alpha}(G)=\cdots=\lambda_n(A_{\alpha}(G)$. This completes the proof. $\Box$
\end{proof}

If $G$ is a complete graph $K_n$, then equality holds in (3.1). However, there are many other cases of equality some of which are rather complicated
and their complete description seems difficult.

\begin{problem}\label{pro3,1} %------
Characterize the graphs for which equality holds in $(3.1)$.
\end{problem}

\begin{corollary}\label{cor3,2} %------
Let $\frac{1}{2}< \alpha <1$ and $G$ be a graph with $n$ vertices and maximum degree $\Delta<n-1$. If $1\leq k\leq n-1$ and $G$ has no isolated vertices, then
$$S_{k}(A_{\alpha}(G))\leq 2\alpha m-(n-k)\left(\frac{\det(A_{\alpha}(G))}{\alpha^{k-1}\Delta(n-2)^{k-1}}\right)^{\frac{1}{n-k}}.$$
\end{corollary}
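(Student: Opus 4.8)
The plan is to derive this as a direct consequence of Theorem~\ref{th3,3} by bounding the two eigenvalues that appear in the denominator of (3.1). Since $\frac{1}{2}<\alpha<1$ and $G$ has no isolated vertices, $A_{\alpha}(G)$ is positive definite, so all the quantities below are positive and the inequality in Theorem~\ref{th3,3} applies.

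First I would bound $\lambda_1(A_{\alpha}(G))$. By Lemma~\ref{le2,4} we have $\lambda_1(A_{\alpha}(G))\leq \Delta$. Next I would bound $\lambda_2(A_{\alpha}(G))$: since $\Delta<n-1$ and $\frac{1}{2}<\alpha<1$, Lemma~\ref{le2,5} gives $\lambda_2(A_{\alpha}(G))\leq \alpha(n-2)$. Combining these, the product in the denominator satisfies
$$\lambda_1(A_{\alpha}(G))\,\lambda_2^{k-1}(A_{\alpha}(G))\leq \Delta\,\alpha^{k-1}(n-2)^{k-1}.$$
Because $A_{\alpha}(G)$ is positive definite, $\det(A_{\alpha}(G))>0$, so replacing the denominator in (3.1) by the larger quantity $\alpha^{k-1}\Delta(n-2)^{k-1}$ only increases the fraction $\det(A_{\alpha}(G))/(\cdots)$, hence increases the subtracted term's argument; since $x\mapsto x^{1/(n-k)}$ is increasing on $[0,\infty)$ and we are subtracting, the right-hand side of (3.1) is at most
$$2\alpha m-(n-k)\left(\frac{\det(A_{\alpha}(G))}{\alpha^{k-1}\Delta(n-2)^{k-1}}\right)^{\frac{1}{n-k}},$$
which is exactly the claimed bound.

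The only subtlety to watch is that the substitution goes in the right direction: we are enlarging the denominator, so the term being subtracted gets smaller, and the overall upper bound therefore gets weaker (larger) — which is precisely what we want for a valid corollary. One should also confirm $n-k\geq 1$ so the exponent is well defined, which is guaranteed by the hypothesis $1\leq k\leq n-1$; and $\det(A_{\alpha}(G))>0$ together with $\Delta\geq 1$ (no isolated vertices, and $G\neq K_n$ is implicit since $\Delta<n-1$) ensures the base of the radical is a positive real. I do not expect any real obstacle here — the corollary is a routine specialization of Theorem~\ref{th3,3} using Lemmas~\ref{le2,4} and~\ref{le2,5}; the mild care needed is simply in tracking the direction of the two monotonicity steps.
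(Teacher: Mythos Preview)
Your approach is exactly the paper's: apply Theorem~\ref{th3,3} and bound $\lambda_1$ via Lemma~\ref{le2,4} and $\lambda_2$ via Lemma~\ref{le2,5}. One wording slip to fix: in your first paragraph you say enlarging the denominator ``increases the fraction'' --- it of course \emph{decreases} it (as you correctly state in your second paragraph), and that is precisely why the resulting bound is weaker and hence still valid.
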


\begin{proof} By Lemma \ref{le2,4}, we have $\lambda_1(A_{\alpha}(G))\leq \Delta$. By Lemma \ref{le2,5} and Theorem \ref{th3,3}, we have the proof. $\Box$
\end{proof}

\begin{corollary}\label{cor3,3} %------
Let $G$ be a connected non-bipartite graph. Then
$$S_{k}(Q(G))\leq 2 m-(n-k)\left(\frac{\det(Q(G))}{\lambda_1(Q(G)\lambda_2^{k-1}(Q(G))}\right)^{\frac{1}{n-k}}.$$
\end{corollary}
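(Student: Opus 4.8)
The plan is to specialize the argument of Theorem \ref{th3,3} to $\alpha=\frac12$, where $2A_{1/2}(G)=Q(G)$, and then rescale. The only technical obstruction to a direct appeal to Theorem \ref{th3,3} is that it is stated for $\frac12<\alpha<1$, a range in which positive definiteness of $A_{\alpha}(G)$ is automatic; at the endpoint $\alpha=\frac12$ one must use the hypothesis on $G$. Since $G$ is connected and non-bipartite, the least signless Laplacian eigenvalue $\lambda_n(Q(G))$ is strictly positive, so $Q(G)$, and hence $A_{1/2}(G)=\frac12 Q(G)$, is positive definite. With this in hand, the chain of inequalities in the proof of Theorem \ref{th3,3} (the trace identity $\sum_{i=1}^k\lambda_i=2\alpha m-\sum_{i=k+1}^n\lambda_i$, then the arithmetic–geometric mean inequality applied to $\lambda_{k+1},\dots,\lambda_n$, then discarding all but $\lambda_1$ and $\lambda_2^{k-1}$ in the denominator) goes through verbatim with $\alpha=\frac12$, yielding
$$S_k(A_{1/2}(G))\le m-(n-k)\left(\frac{\det(A_{1/2}(G))}{\lambda_1(A_{1/2}(G))\,\lambda_2^{k-1}(A_{1/2}(G))}\right)^{\frac{1}{n-k}}.$$

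Next I would translate this into $Q(G)$ using $\lambda_i(Q(G))=2\lambda_i(A_{1/2}(G))$ for every $i$. This gives $\det(A_{1/2}(G))=2^{-n}\det(Q(G))$, $\lambda_1(A_{1/2}(G))=\tfrac12\lambda_1(Q(G))$, and $\lambda_2^{k-1}(A_{1/2}(G))=2^{-(k-1)}\lambda_2^{k-1}(Q(G))$, so the ratio inside the bracket equals $2^{-(n-k)}$ times the corresponding ratio for $Q(G)$. Taking the $(n-k)$-th root pulls out a factor $\frac12$; multiplying the whole inequality by $2$ and using $S_k(Q(G))=2S_k(A_{1/2}(G))$ then produces precisely
$$S_k(Q(G))\le 2m-(n-k)\left(\frac{\det(Q(G))}{\lambda_1(Q(G))\,\lambda_2^{k-1}(Q(G))}\right)^{\frac{1}{n-k}}.$$

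There is essentially no hard step: the substance lies entirely in Theorem \ref{th3,3}, and connectedness together with non-bipartiteness merely supplies the positive definiteness needed at $\alpha=\frac12$. The only place to be careful is the bookkeeping of the powers of $2$ — there are three of them, $2^{-n}$ from the determinant, $2^{-1}$ from $\lambda_1$, and $2^{-(k-1)}$ from $\lambda_2^{k-1}$, combining to $2^{-(n-k)}$ — so that the concluding rescaling by $2$ lands exactly on the stated constant. One should also note, as inherited from Theorem \ref{th3,3}, that the natural range is $1\le k\le n-1$ (for $k=n$ the statement degenerates), that $G$ then automatically has no isolated vertices, and that equality holds if and only if $\lambda_2(Q(G))=\cdots=\lambda_k(Q(G))$ and $\lambda_{k+1}(Q(G))=\cdots=\lambda_n(Q(G))$.
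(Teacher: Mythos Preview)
Your proposal is correct and is exactly what the paper intends: Corollary~\ref{cor3,3} is stated without proof as an immediate specialization of Theorem~\ref{th3,3} to $\alpha=\tfrac12$, with the connected non-bipartite hypothesis supplying the positive definiteness of $Q(G)$ that Theorem~\ref{th3,3} obtains from $\alpha>\tfrac12$. Your bookkeeping of the powers of $2$ is accurate, and one could equally well rerun the AM--GM argument directly on $Q(G)$ using $\mathrm{tr}(Q(G))=2m$, which avoids the rescaling altogether.
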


\begin{theorem}\label{th3,4} %------
Let $0\leq \alpha <\alpha_0(G)$ and $G$ be a graph with $n$ vertices and $m$ edges, and let $p$ be the positive inertia index of $A_{\alpha}(G)$.
Then
$$S_{p}(A_{\alpha}(G))\leq 2\alpha m +\frac{1}{2}(2m(1-\alpha)^2+\alpha^2Z_1)\sqrt{\frac{n(n-p)}{Z_1}}.$$
\end{theorem}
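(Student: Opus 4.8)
The plan is to mimic the proof strategy of Theorem~\ref{th3,2} (which rests on Lemma~\ref{le2,2}), but applied not to $A_\alpha(G)$ itself—which need not be positive semidefinite in the range $0\le\alpha<\alpha_0(G)$—but to a suitable positive semidefinite matrix extracted from it. The natural candidate is the ``positive part'' of $A_\alpha(G)$ in the spectral decomposition: writing $A_\alpha(G)=\sum_{i=1}^n \lambda_i E_i$ with $E_i$ the orthogonal spectral projectors, set $M^+=\sum_{i=1}^{p}\lambda_i E_i$, where $p$ is the positive inertia index. Then $M^+$ has exactly the nonnegative eigenvalues $\lambda_1,\dots,\lambda_p$ of $A_\alpha(G)$ together with $n-p$ zero eigenvalues, so $S_p(A_\alpha(G))=S_p(M^+)$, and $M^+$ is positive semidefinite, so Lemma~\ref{le2,2} applies to it with $k=p$.

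First I would apply Lemma~\ref{le2,2} to $M^+$ with $k=p$ and $n$ in the role of $n$, giving
$$S_p(A_\alpha(G))=S_p(M^+)\le \frac{p\,\mathrm{tr}(M^+)}{n}+\sqrt{\frac{p(n-p)}{n}\,f(M^+)},\qquad f(M^+)=\sum_{i,j}(M^+)_{ij}^2-\frac{(\mathrm{tr}\,M^+)^2}{n}.$$
Next I would bound the two ingredients crudely. Since all eigenvalues of $M^+$ are nonnegative, $\mathrm{tr}(M^+)=\sum_{i=1}^p\lambda_i=S_p(A_\alpha(G))\le 2\alpha m$ (because the discarded eigenvalues $\lambda_{p+1},\dots,\lambda_n$ are $\le 0$ and $\sum_i\lambda_i=\mathrm{tr}(A_\alpha(G))=2\alpha m$), so $\frac{p\,\mathrm{tr}(M^+)}{n}\le \frac{2\alpha m p}{n}\le 2\alpha m$. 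For $f(M^+)$, I would drop the subtracted term and use $\sum_{i,j}(M^+)_{ij}^2=\sum_{i=1}^p\lambda_i^2\le \sum_{i=1}^n\lambda_i^2=\mathrm{tr}(A_\alpha(G)^2)=\sum_{i,j}(A_\alpha(G))_{ij}^2=\alpha^2 Z_1+2m(1-\alpha)^2$, exactly the quantity computed in the proof of Theorem~\ref{th3,2}. Substituting $p\le n$ inside the square root where it helps, this yields $\sqrt{\frac{p(n-p)}{n}f(M^+)}\le \sqrt{\frac{n(n-p)}{n}\cdot\bigl(\alpha^2 Z_1+2m(1-\alpha)^2\bigr)}$; matching this against the claimed bound $\frac12\bigl(2m(1-\alpha)^2+\alpha^2 Z_1\bigr)\sqrt{\tfrac{n(n-p)}{Z_1}}$ is then a matter of pulling $\sqrt{\alpha^2 Z_1+2m(1-\alpha)^2}$ out and comparing with $\tfrac12\sqrt{\alpha^2 Z_1+2m(1-\alpha)^2}\cdot\sqrt{(\alpha^2 Z_1+2m(1-\alpha)^2)/Z_1}$, i.e. checking $\alpha^2 Z_1 + 2m(1-\alpha)^2 \le \tfrac14(\cdots)^2/Z_1 \cdot Z_1$—I would reconcile the constants carefully here rather than above.

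The main obstacle I anticipate is precisely this last bookkeeping: making sure the factor-of-$\tfrac12$ and the $\sqrt{n(n-p)/Z_1}$ really fall out of Lemma~\ref{le2,2} with the stated constants, which may require being less wasteful at the step $p\le n$ (keeping $\sqrt{p(n-p)}$ rather than $\sqrt{n(n-p)}$ until the end) and possibly using $f(M^+)\le \bigl(\alpha^2 Z_1 + 2m(1-\alpha)^2\bigr)$ together with a trace bound of the form $\mathrm{tr}(M^+)\ge\lambda_1\ge\sqrt{Z_1/n}$ from Lemma~\ref{le2,4} to convert the leading term appropriately. The underlying linear-algebra content—replacing $A_\alpha(G)$ by its positive part and invoking Lemma~\ref{le2,2}—is routine; the delicate part is purely the arithmetic of matching the two sides.
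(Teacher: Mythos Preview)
Your approach contains a sign error that undermines the argument. You write $\mathrm{tr}(M^+)=\sum_{i=1}^p\lambda_i=S_p(A_\alpha(G))\le 2\alpha m$ ``because the discarded eigenvalues $\lambda_{p+1},\dots,\lambda_n$ are $\le 0$''. But if those eigenvalues are nonpositive then $\mathrm{tr}(M^+)=2\alpha m-\sum_{i>p}\lambda_i\ge 2\alpha m$, not $\le$; indeed, since $\alpha<\alpha_0(G)$ forces at least one strictly negative eigenvalue, one has $S_p(A_\alpha(G))>2\alpha m$ strictly. Thus the leading term in your Lemma~\ref{le2,2} bound cannot be capped at $2\alpha m$ this way. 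Worse, because $\mathrm{tr}(M^+)=S_p$ is precisely the quantity you are trying to bound, the inequality from Lemma~\ref{le2,2} becomes implicit in $S_p$; rearranging it yields $S_p\le\sqrt{np\,f(M^+)/(n-p)}$, with no additive $2\alpha m$ term at all. So the ``bookkeeping'' you flag as the obstacle is not merely arithmetic: the structure $2\alpha m+\frac12 S\sqrt{n(n-p)/Z_1}$ (where $S=2m(1-\alpha)^2+\alpha^2 Z_1$) simply does not emerge from this route.

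The paper proceeds differently and more directly. It starts from the trace identity $S_p=2\alpha m-\sum_{i>p}\lambda_i\le 2\alpha m+\bigl|\sum_{i>p}\lambda_i\bigr|$ and controls the nonpositive tail by Cauchy--Schwarz, $\bigl|\sum_{i>p}\lambda_i\bigr|\le\sqrt{(n-p)\sum_{i>p}\lambda_i^2}$. The remaining step is the bound $\sum_{i>p}\lambda_i^2\le nS^2/(4Z_1)$, obtained from $\sum_{i>p}\lambda_i^2=S-\sum_{i\le p}\lambda_i^2\le S-\lambda_1^2\le S-Z_1/n$ (Lemma~\ref{le2,4}) together with the elementary inequality $S-Z_1/n\le nS^2/(4Z_1)$, which is just $\bigl(\sqrt{Z_1/n}-\frac12 S\sqrt{n/Z_1}\bigr)^2\ge 0$. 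The missing idea in your proposal is therefore to work with the \emph{nonpositive} tail of the spectrum rather than the positive part, and to use the lower bound on $\lambda_1$ from Lemma~\ref{le2,4} to cap $\sum_{i>p}\lambda_i^2$.
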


\begin{proof} By Lemma \ref{le2,4}, we have $\lambda_1(A_{\alpha}(G))\geq \sqrt{\frac{Z_1}{n}}$. We assume that $$\sum\limits_{i=1}^{n-p}\lambda_{n-i+1}^2(A_{\alpha}(G))>\frac{n(2m(1-\alpha)^2+\alpha^2Z_1)^2}{4Z_1},$$
in which case
\begin{eqnarray*}
2m(1-\alpha)^2+\alpha^2Z_1 & = & \sum\limits_{i=1}^{p}\lambda_{i}^2(A_{\alpha}(G))+\sum\limits_{i=1}^{n-p}\lambda_{n-i+1}^2(A_{\alpha}(G))\\
& \geq & \lambda_1^2(A_{\alpha}(G))+\sum\limits_{i=1}^{n-p}\lambda_{n-i+1}^2(A_{\alpha}(G))\\
& > & \frac{Z_1}{n}+\frac{n(2m(1-\alpha)^2+\alpha^2Z_1)^2}{4Z_1}.
\end{eqnarray*}
This implies that
$$\left(\sqrt{\frac{Z_1}{n}}-\frac{1}{2}(2m(1-\alpha)^2+\alpha^2Z_1)\sqrt{\frac{n}{Z_1}}\right)^2<0,$$
which is a contradiction. Thus
$$\sum\limits_{i=1}^{n-p}\lambda_{n-i+1}^2(A_{\alpha}(G))\leq \frac{n(2m(1-\alpha)^2+\alpha^2Z_1)^2}{4Z_1}.$$
By the Cauchy-Schwarz inequality, we have
\begin{eqnarray*}
S_{p}(A_{\alpha}(G))& = & 2\alpha m-\sum\limits_{i=1}^{n-p}\lambda_{n-i+1}(A_{\alpha}(G))\\
& \leq &  2\alpha m+\sqrt{(n-p)\sum\limits_{i=1}^{n-p}\lambda_{n-i+1}^2(A_{\alpha}(G))}\\
& \leq & 2\alpha m +\frac{1}{2}(2m(1-\alpha)^2+\alpha^2Z_1)\sqrt{\frac{n(n-p)}{Z_1}}.
\end{eqnarray*}
This completes the proof. $\Box$
\end{proof}

By Lemma \ref{le2,3} and Theorem \ref{th3,4}, we obtain the following corollary.

\begin{corollary}\label{cor3,4} %------
Let $0\leq \alpha <\alpha_0(G)$ and $G$ be a graph with $n$ vertices and $m$ edges, and let $p$ be the positive inertia index of $A_{\alpha}(G)$.
Then
$$S_{p}(A_{\alpha}(G))\leq 2\alpha m +\left(mn(1-\alpha)^2+2\alpha^2m^2+\frac{\alpha^2n^2}{8}(\Delta-\delta)^2\right)\sqrt{\frac{2(n-p)}{8m^2+n(\Delta-\delta)^2}}.$$
\end{corollary}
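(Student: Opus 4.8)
The plan is to combine Theorem~\ref{th3,4} with the upper bound on $Z_1$ supplied by Lemma~\ref{le2,3}. Theorem~\ref{th3,4} already gives
$$S_{p}(A_{\alpha}(G))\leq 2\alpha m +\tfrac{1}{2}\bigl(2m(1-\alpha)^2+\alpha^2Z_1\bigr)\sqrt{\tfrac{n(n-p)}{Z_1}},$$
so the only task is to bound the right-hand side using $Z_1\leq \frac{4m^2}{n}+\frac{n}{4}(\Delta-\delta)^2$. The subtlety is that $Z_1$ appears both in the numerator factor $2m(1-\alpha)^2+\alpha^2Z_1$ (where larger $Z_1$ is bad) and under a $\sqrt{1/Z_1}$ (where larger $Z_1$ is good), so one cannot simply substitute the upper bound for $Z_1$ everywhere. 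I would split the product and treat the two occurrences separately.

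First I would write the right-hand side as $2\alpha m + \tfrac{1}{2}\sqrt{n(n-p)}\cdot g(Z_1)$ where $g(t)=\bigl(2m(1-\alpha)^2+\alpha^2 t\bigr)t^{-1/2}=2m(1-\alpha)^2 t^{-1/2}+\alpha^2 t^{1/2}$. This $g$ is a sum of a decreasing and an increasing function of $t$, hence not monotone in general; to get a clean closed form I would instead keep the factor $2m(1-\alpha)^2+\alpha^2Z_1$ and bound it above by $2m(1-\alpha)^2+\alpha^2\bigl(\frac{4m^2}{n}+\frac{n}{4}(\Delta-\delta)^2\bigr)=2mn(1-\alpha)^2/n+\ldots$; after multiplying through by $\tfrac{1}{2}\sqrt{n(n-p)}$ and collecting the $n$ inside, this numerator becomes $\frac{1}{n}\bigl(2mn(1-\alpha)^2+4\alpha^2m^2+\frac{\alpha^2n^2}{4}(\Delta-\delta)^2\bigr)$, i.e.\ $\frac{2}{n}\bigl(mn(1-\alpha)^2+2\alpha^2m^2+\frac{\alpha^2n^2}{8}(\Delta-\delta)^2\bigr)$, matching the parenthesized expression in the corollary. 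Separately, for the $\sqrt{1/Z_1}$ factor I would use the \emph{lower} bound $Z_1\geq \frac{4m^2}{n}+\frac{1}{2}(\Delta-\delta)^2$ from the same lemma, which gives $\sqrt{\frac{n}{Z_1}}\leq \sqrt{\frac{n}{\frac{4m^2}{n}+\frac{1}{2}(\Delta-\delta)^2}}=\sqrt{\frac{2n^2}{8m^2+n(\Delta-\delta)^2}}$. Multiplying the two bounded factors and the leftover $\sqrt{n-p}$ together produces exactly
$$\left(mn(1-\alpha)^2+2\alpha^2m^2+\tfrac{\alpha^2n^2}{8}(\Delta-\delta)^2\right)\sqrt{\tfrac{2(n-p)}{8m^2+n(\Delta-\delta)^2}},$$
after the factors of $n$ and $\tfrac12$ are reconciled, which is the claimed right-hand side.

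So the proof is essentially: invoke Theorem~\ref{th3,4}, then apply the upper bound of Lemma~\ref{le2,3} to the additive factor $\alpha^2Z_1$ and the lower bound of Lemma~\ref{le2,3} to the $Z_1$ under the square root, and simplify. The main obstacle is purely bookkeeping: making sure the two different substitutions (upper bound in one place, lower bound in the other) are each legitimate — the upper substitution is valid because the prefactor is increasing in $Z_1$, the lower substitution is valid because $\sqrt{1/Z_1}$ is decreasing in $Z_1$ — and then checking that the powers of $n$ and the constants $\tfrac12$, $\tfrac14$, $\tfrac18$ combine correctly into the stated form. No eigenvalue estimates beyond Theorem~\ref{th3,4} are needed, and the rest is algebra; I would present it as a two-line derivation citing ``By Lemma~\ref{le2,3} and Theorem~\ref{th3,4}'' exactly as the paper's lead-in sentence promises.
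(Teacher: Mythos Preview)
Your proof is correct and is exactly what the paper's one-line ``By Lemma~\ref{le2,3} and Theorem~\ref{th3,4}'' unpacks to: apply the upper bound of Lemma~\ref{le2,3} to the increasing factor $2m(1-\alpha)^2+\alpha^2Z_1$ and the lower bound of Lemma~\ref{le2,3} to the decreasing factor $\sqrt{n(n-p)/Z_1}$, then simplify. Your bookkeeping of the constants is accurate, and your observation that the two occurrences of $Z_1$ must be handled with opposite bounds (which is legitimate since both factors are nonnegative) is the only nontrivial point.
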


\section{\large On the sum of the $k$ largest $A_{\alpha}$-eigenvalues of a tree}

\begin{theorem}\label{th4,1} %------
 Let $G$ be a bipartite graph with $n$ vertices and $m$ edges, and let $\eta$ be the nullity of $G$. Then
 $$S_{k}(A(G))\leq
\begin{dcases}
\sqrt{km}, & \text{if}\,\, 1\leq k \leq \left\lfloor\frac{n-\eta}{2}\right\rfloor;\\
\sqrt{\left\lfloor\frac{n-\eta}{2}\right\rfloor m}, & \text{if}\,\,\left\lfloor\frac{n-\eta}{2}\right\rfloor< k \leq \left\lfloor\frac{n+\eta}{2}\right\rfloor;\\
\sqrt{(n-k)m}, & \text{if}\,\, \left\lfloor\frac{n+\eta}{2}\right\rfloor< k \leq n.
\end{dcases}
$$
\end{theorem}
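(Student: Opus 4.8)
The plan is to exploit the symmetry of the spectrum of a bipartite graph together with a Cauchy--Schwarz argument controlled by the trace of $A(G)^2$. Recall that for a bipartite graph $G$ the adjacency spectrum is symmetric about $0$: if $\lambda$ is an eigenvalue with multiplicity $t$, then so is $-\lambda$. Writing $\eta$ for the nullity (the multiplicity of $0$), there are exactly $\frac{n-\eta}{2}$ positive eigenvalues and $\frac{n-\eta}{2}$ negative eigenvalues (so $n-\eta$ is even). Denote the positive eigenvalues by $\lambda_1\geq\lambda_2\geq\cdots\geq\lambda_r>0$ where $r=\frac{n-\eta}{2}$. The key numerical fact is $\sum_{i=1}^n\lambda_i(A(G))^2=\operatorname{tr}(A(G)^2)=2m$, hence $\sum_{i=1}^r\lambda_i^2=m$ by symmetry.

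For the first regime $1\leq k\leq\lfloor\frac{n-\eta}{2}\rfloor=r$, I would apply the Cauchy--Schwarz inequality directly to the $k$ largest eigenvalues, all of which are among the $\lambda_i$ with $1\leq i\leq r$:
\[
S_k(A(G))=\sum_{i=1}^k\lambda_i\leq\sqrt{k\sum_{i=1}^k\lambda_i^2}\leq\sqrt{k\sum_{i=1}^r\lambda_i^2}=\sqrt{km}.
\]
For the middle regime $r<k\leq\lfloor\frac{n+\eta}{2}\rfloor$, observe that $\lambda_{r+1}=\cdots=0$ (these indices fall within the block of zero eigenvalues since there are $\eta$ of them and $r+\eta=\frac{n+\eta}{2}\geq k$), so adding those terms contributes nothing: $S_k(A(G))=S_r(A(G))\leq\sqrt{rm}=\sqrt{\lfloor\frac{n-\eta}{2}\rfloor m}$ by the case just handled. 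For the last regime $\lfloor\frac{n+\eta}{2}\rfloor<k\leq n$, I would pass to the complement of the summation: $S_k(A(G))=\operatorname{tr}(A(G))-\sum_{i=k+1}^n\lambda_i=-\sum_{i=k+1}^n\lambda_i$ since $\operatorname{tr}(A(G))=0$. The $n-k$ smallest eigenvalues $\lambda_{k+1},\dots,\lambda_n$ are all negative (as $k\geq\frac{n+\eta}{2}+1$ exceeds the number of nonnegative eigenvalues), and by the spectral symmetry $\{-\lambda_{k+1},\dots,-\lambda_n\}$ is a set of $n-k$ positive eigenvalues, so $\sum_{i=k+1}^n\lambda_i^2\leq\sum_{i=1}^r\lambda_i^2=m$; another Cauchy--Schwarz gives $S_k(A(G))=\sum_{i=k+1}^n(-\lambda_i)\leq\sqrt{(n-k)m}$.

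The only genuinely delicate point is the bookkeeping of indices: one must verify that in each regime the eigenvalues being summed really do lie in the claimed sign block, which is where the precise value $r=\frac{n-\eta}{2}$ and the evenness of $n-\eta$ matter; the floor functions in the statement are cosmetic once one knows $n-\eta$ is even, but should be tracked carefully so the three cases tile $\{1,\dots,n\}$ exactly. Everything else is two applications of Cauchy--Schwarz and the elementary identity $\operatorname{tr}(A^2)=2m$, so I do not expect any further obstruction.
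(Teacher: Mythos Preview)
Your proof is correct and follows essentially the same approach as the paper's: both exploit the symmetry of the bipartite spectrum about $0$, the identity $\sum_i\lambda_i^2=2m$ (hence $\sum_{i=1}^{r}\lambda_i^2=m$ with $r=\tfrac{n-\eta}{2}$), and Cauchy--Schwarz for the first regime. For the third regime the paper invokes the symmetry identity $S_k(A(G))=S_{n-k}(A(G))$ and reduces to the first case, which is equivalent to your complementary-sum argument since $-\sum_{i=k+1}^{n}\lambda_i=\sum_{i=1}^{n-k}\lambda_i$ by $\lambda_{n-i+1}=-\lambda_i$; your write-up is simply more explicit about the middle and third regimes.
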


\begin{proof} Since $G$ is a bipartite graph, we know that eigenvalues of $A(G)$ are symmetric with respect to the origin, that is $S_k(A(G))=S_{n-k}(A(G))$ for $\left\lfloor\frac{n+\eta}{2}\right\rfloor< k\leq n-1$. Since $\sum\limits_{i=1}^{n}\lambda_i^2(A(G))=2m$, we have $\sum_{i=1}^{\left\lfloor\frac{n-\eta}{2}\right\rfloor}\lambda_i^2(A(G))=m$. By the Cauchy-Schwarz inequality, we have
$$S_{k}(A(G))=\sum\limits_{i=1}^{k}\lambda_i(A(G))\leq \sqrt{k\sum\limits_{i=1}^{k}\lambda_i^2(A(G))}\leq \sqrt{km}$$
for $1\leq k \leq \left\lfloor\frac{n-\eta}{2}\right\rfloor$. This completes the proof. $\Box$
\end{proof}

If $T$ is a tree with $n$ vertices and matching number $\beta$, Cvetkovi\'{c} and Gutman \cite{CG} showed that $\eta=n-2\beta$. Thus we have

\begin{corollary}\label{cor4,1} %------
 Let $T$ be a tree with $n$ vertices and matching number $\beta$. Then
 $$S_{k}(A(T))\leq
\begin{dcases}
\sqrt{k(n-1)}, & \text{if}\,\, 1\leq k \leq \beta;\\
\sqrt{\beta (n-1)}, & \text{if}\,\,\beta< k \leq n-\beta;\\
\sqrt{(n-k)(n-1)}, & \text{if}\,\, n-\beta< k \leq n.
\end{dcases}
$$
\end{corollary}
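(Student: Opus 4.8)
The plan is to deduce Corollary \ref{cor4,1} directly from Theorem \ref{th4,1} by specializing the graph parameters. Since a tree $T$ on $n$ vertices is bipartite and has exactly $m = n-1$ edges, Theorem \ref{th4,1} applies with $m$ replaced by $n-1$. The only remaining task is to rewrite the three floor-thresholds $\lfloor (n-\eta)/2\rfloor$ and $\lfloor (n+\eta)/2\rfloor$ in terms of the matching number $\beta$.

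First I would invoke the Cvetkovi\'{c}--Gutman identity $\eta = n - 2\beta$, which is stated in the excerpt just before the corollary. Substituting this gives $\frac{n-\eta}{2} = \frac{n-(n-2\beta)}{2} = \beta$ and $\frac{n+\eta}{2} = \frac{n+(n-2\beta)}{2} = n-\beta$, both of which are already integers, so the floor operations are vacuous: $\lfloor (n-\eta)/2\rfloor = \beta$ and $\lfloor (n+\eta)/2\rfloor = n-\beta$. (One should note in passing that $n - 2\beta \ge 0$ always holds, since a matching covers at most $n$ vertices, so these quantities are well-defined and nonnegative.)

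Plugging $m = n-1$, $\lfloor (n-\eta)/2\rfloor = \beta$, and $\lfloor (n+\eta)/2\rfloor = n-\beta$ into the three cases of Theorem \ref{th4,1} yields
$$S_k(A(T)) \le \sqrt{k(n-1)} \quad (1 \le k \le \beta), \qquad S_k(A(T)) \le \sqrt{\beta(n-1)} \quad (\beta < k \le n-\beta),$$
$$S_k(A(T)) \le \sqrt{(n-k)(n-1)} \quad (n-\beta < k \le n),$$
which is exactly the claimed statement, completing the proof.

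There is essentially no genuine obstacle here: the corollary is a mechanical substitution into the already-established Theorem \ref{th4,1}. The only point requiring a moment's care is verifying that the floor functions disappear after substituting $\eta = n - 2\beta$ (they do, since $n$ and $n-2\beta$ have the same parity), so no rounding subtleties intervene at the boundary indices $k = \beta$ and $k = n-\beta$.
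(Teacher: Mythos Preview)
Your proposal is correct and follows exactly the same approach as the paper: the corollary is obtained from Theorem \ref{th4,1} by substituting $m=n-1$ and the Cvetkovi\'{c}--Gutman identity $\eta=n-2\beta$, which makes the floor functions redundant. The paper gives no further argument beyond this substitution.
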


\begin{theorem}\label{th4,2} %------
Let $T$ be a tree with $n$ vertices.

{\normalfont (i)} If $0\leq \alpha< \frac{1}{2}$, then
$$S_k(A_{\alpha}(T))\leq
\begin{dcases}
\alpha(n+2k-2)+(1-2\alpha)\sqrt{k(n-1)}, & \text{if}\,\, 1\leq k \leq \beta;\\
\alpha(n+2k-2)+(1-2\alpha)\sqrt{\beta (n-1)}, & \text{if}\,\,\beta< k \leq n-\beta;\\
\alpha(n+2k-2)+(1-2\alpha)\sqrt{(n-k)(n-1)}, & \text{if}\,\, n-\beta< k \leq n.
\end{dcases}
$$

{\normalfont (ii)} If $\frac{1}{2} \leq \alpha<1$, then
$$S_k(A_{\alpha}(T))\leq \alpha(n+2k-2)$$
for $1\leq k \leq n$.
\end{theorem}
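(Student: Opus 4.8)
The plan is to decompose $A_\alpha(T)$ as a combination, with nonnegative coefficients, of the signless Laplacian $Q(T)$ and one further matrix whose $S_k$ is already under control, and then invoke Fan's inequality (Theorem~\ref{th1,1}) together with Lemma~\ref{le2,6} and Corollary~\ref{cor4,1}. The one standing fact I will use throughout is that, since $T$ is bipartite, $Q(T)$ and $L(T)$ are similar (conjugate by the diagonal $\pm1$ matrix recording the bipartition), hence isospectral; in particular $S_k(Q(T))=S_k(L(T))\le n+2k-2$ by Lemma~\ref{le2,6} for every $1\le k\le n$.

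For part (i), with $0\le\alpha<\tfrac12$, I would write $A_\alpha(T)=\alpha Q(T)+(1-2\alpha)A(T)$; this is immediate from $Q(T)=D(T)+A(T)$ and $A_\alpha(T)=\alpha D(T)+(1-\alpha)A(T)$, and crucially both coefficients $\alpha$ and $1-2\alpha$ are nonnegative in this range. Applying Theorem~\ref{th1,1} and then using that $S_k(cM)=c\,S_k(M)$ for a scalar $c\ge0$ gives
\[
S_k(A_\alpha(T))\;\le\;\alpha\,S_k(Q(T))+(1-2\alpha)\,S_k(A(T))\;\le\;\alpha(n+2k-2)+(1-2\alpha)\,S_k(A(T)).
\]
Substituting the three-case estimate of $S_k(A(T))$ from Corollary~\ref{cor4,1} (according as $1\le k\le\beta$, $\beta<k\le n-\beta$, or $n-\beta<k\le n$) produces precisely the three displayed bounds.

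For part (ii), with $\tfrac12\le\alpha<1$, I would instead use the decomposition $A_\alpha(T)=\tfrac12 Q(T)+\bigl(\alpha-\tfrac12\bigr)L(T)$, which holds because $\tfrac12\bigl(D(T)+A(T)\bigr)+\bigl(\alpha-\tfrac12\bigr)\bigl(D(T)-A(T)\bigr)=\alpha D(T)+(1-\alpha)A(T)$; again both coefficients $\tfrac12$ and $\alpha-\tfrac12$ are nonnegative. By Theorem~\ref{th1,1}, the scalar rule above, Lemma~\ref{le2,6}, and $S_k(Q(T))=S_k(L(T))$,
\[
S_k(A_\alpha(T))\;\le\;\tfrac12 S_k(Q(T))+\bigl(\alpha-\tfrac12\bigr)S_k(L(T))\;\le\;\Bigl(\tfrac12+\alpha-\tfrac12\Bigr)(n+2k-2)\;=\;\alpha(n+2k-2),
\]
for all $1\le k\le n$, which is part (ii).

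There is no genuine obstacle here; once the two decompositions are written down the argument is short bookkeeping. The only point needing a moment's attention is that a scalar may be pulled out of $S_k$ only when it is nonnegative — this is exactly why the ranges $\alpha<\tfrac12$ and $\alpha\ge\tfrac12$ require different splittings, each tailored so that all coefficients stay $\ge0$ — and that Lemma~\ref{le2,6} and Corollary~\ref{cor4,1} already cover the full range $1\le k\le n$, so no separate treatment of the endpoints is needed.
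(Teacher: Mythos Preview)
Your proof is correct. Part~(i) is identical to the paper's argument: the same decomposition $A_\alpha(T)=\alpha Q(T)+(1-2\alpha)A(T)$, the same appeal to Fan's inequality, Lemma~\ref{le2,6} (via $Q(T)\sim L(T)$ for bipartite $T$), and Corollary~\ref{cor4,1}.

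For part~(ii) you take a genuinely different route. The paper decomposes $A_\alpha(T)=(1-\alpha)Q(T)+(2\alpha-1)D(T)$ and then needs the Schur majorization fact $S_k(D(T))\le S_k(Q(T))$ (the spectrum of a real symmetric matrix majorizes its diagonal) to absorb the $D$-term into the $Q$-term before applying Lemma~\ref{le2,6}. Your decomposition $A_\alpha(T)=\tfrac12 Q(T)+(\alpha-\tfrac12)L(T)$ sidesteps Schur majorization entirely: since $Q(T)$ and $L(T)$ are isospectral, Lemma~\ref{le2,6} bounds both terms directly. Your argument is slightly cleaner in that it uses one fewer ingredient; the paper's, on the other hand, would generalize more readily to non-bipartite situations where $Q$ and $L$ are no longer isospectral but Schur majorization still holds.
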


\begin{proof} {\normalfont (i)} From Proposition 2.5 in \cite{CRS}, we know that $Q(G)$ and $L(G)$ share the same eigenvalues if and only if $G$ is bipartite. By Lemma \ref{le2,6}, we have
$S_k(Q(T))\leq n+2k-2$
for $1\leq k \leq n$. Since $A_{\alpha}(T)=\alpha Q(T)+(1-2\alpha)A(T)$ for $0\leq \alpha< \frac{1}{2}$, by Theorem \ref{th1,1} and Corollary \ref{cor4,1}, we have
\begin{eqnarray*}
S_k(A_{\alpha}(T)) & \leq & \alpha S_k(Q(T))+(1-2\alpha)S_k(A(T))\\
& \leq & \begin{dcases}
\alpha(n+2k-2)+(1-2\alpha)\sqrt{k(n-1)}, & \text{if}\,\, 1\leq k \leq \beta;\\
\alpha(n+2k-2)+(1-2\alpha)\sqrt{\beta (n-1)}, & \text{if}\,\,\beta< k \leq n-\beta;\\
\alpha(n+2k-2)+(1-2\alpha)\sqrt{(n-k)(n-1)}, & \text{if}\,\, n-\beta< k \leq n.
\end{dcases}
\end{eqnarray*}

{\normalfont (ii)} Since $Q(G)$ is a real symmetric matrix, the spectrum of $Q(G)$ majorizes its main diagonal, that is, $S_k(Q(G))\geq S_k(D(G))$. Since $A_{\alpha}(T)=(1-\alpha) Q(T)+(2\alpha-1)D(T)$ for $\frac{1}{2}\leq \alpha< 1$, by Theorem \ref{th1,1} and Lemma \ref{le2,6}, we have
\begin{eqnarray*}
S_k(A_{\alpha}(T)) & \leq & (1-\alpha) S_k(Q(T))+(2\alpha-1)S_k(D(T))\\
& \leq & (1-\alpha) S_k(Q(T))+(2\alpha-1)S_k(Q(T))\\
& = & \alpha S_k(Q(T))\\
& \leq & \alpha(n+2k-2).
\end{eqnarray*}
The proof is completed. $\Box$
\end{proof}

\begin{theorem}\label{th4,3} %------
Let $G$ be a connected graph with $n$ vertices and $m$ edges, and let $\beta'$ be the matching number of the spanning tree of $G$.

{\normalfont (i)} If $0\leq \alpha< \frac{1}{2}$, then
 $$S_k(A_{\alpha}(G))\leq
\begin{dcases}
\alpha(n+2k-2)+(1-2\alpha)\sqrt{k(n-1)}+m-n+1, & \text{if}\,\, 1\leq k \leq \beta';\\
\alpha(n+2k-2)+(1-2\alpha)\sqrt{\beta' (n-1)}+m-n+1, & \text{if}\,\,\beta'< k \leq n-\beta';\\
\alpha(n+2k-2)+(1-2\alpha)\sqrt{(n-k)(n-1)}+m-n+1, & \text{if}\,\, n-\beta'< k \leq n.
\end{dcases}
$$

{\normalfont (ii)} If $\frac{1}{2}\leq \alpha< 1$, then
$$S_k(A_{\alpha}(G))\leq \alpha(2k+2m-n)$$
for $2\leq k \leq n$.
\end{theorem}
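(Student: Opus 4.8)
The plan is to reduce both estimates to a spanning tree together with its ``extra'' edges and then apply Fan's inequality (Theorem \ref{th1,1}). Fix a spanning tree $T$ of $G$ with matching number $\beta'$, and let $H$ be the spanning subgraph of $G$ with edge set $E(G)\setminus E(T)$, so that $e(H)=m-n+1$. Since vertex degrees and adjacencies add over the partition $E(G)=E(T)\cup E(H)$, we have $D(G)=D(T)+D(H)$ and $A(G)=A(T)+A(H)$, and hence the exact identities $A_{\alpha}(G)=A_{\alpha}(T)+A_{\alpha}(H)$ and $Q(G)=Q(T)+Q(H)$. Theorem \ref{th1,1} then yields $S_{k}(A_{\alpha}(G))\le S_{k}(A_{\alpha}(T))+S_{k}(A_{\alpha}(H))$ and $S_{k}(Q(G))\le S_{k}(Q(T))+S_{k}(Q(H))$.

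For part (i) I would bound $S_{k}(A_{\alpha}(T))$ by Theorem \ref{th4,2}(i) applied to $T$, which is a tree on $n$ vertices with matching number $\beta'$; this produces precisely the three-case main term appearing in the statement. It then remains to show $S_{k}(A_{\alpha}(H))\le e(H)=m-n+1$ for $0\le\alpha<\tfrac12$. I would write $A_{\alpha}(H)=\alpha Q(H)+(1-2\alpha)A(H)$, in which both coefficients are nonnegative, and apply Theorem \ref{th1,1} once more to get $S_{k}(A_{\alpha}(H))\le\alpha S_{k}(Q(H))+(1-2\alpha)S_{k}(A(H))$. Because $Q(H)$ is positive semidefinite, $S_{k}(Q(H))\le tr(Q(H))=2e(H)$, so the desired bound follows provided $S_{k}(A(H))\le e(H)$, since then the right-hand side is at most $2\alpha e(H)+(1-2\alpha)e(H)=e(H)$.

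The inequality $S_{k}(A(H))\le e(H)$ for an arbitrary graph $H$ is the one genuinely non-routine ingredient, and I expect it to be the main obstacle. The argument I would use: $S_{k}(A(H))$ is at most the sum of all positive eigenvalues of $A(H)$, which (since the eigenvalues sum to $0$) equals $\tfrac12\varepsilon(A(H))$, so it suffices to prove the energy estimate $\varepsilon(A(H))\le 2e(H)$. For that, note that the rank $r$ of $A(H)$ is at most the number of non-isolated vertices of $H$, which is at most $2e(H)$; since $A(H)$ has only $r$ nonzero eigenvalues and $\sum_{i}\lambda_{i}(A(H))^{2}=2e(H)$, the Cauchy--Schwarz inequality gives $\varepsilon(A(H))\le\sqrt{r\cdot 2e(H)}\le 2e(H)$. (A direct application of Cauchy--Schwarz to $S_{k}(A(H))$ only yields $\sqrt{2k\,e(H)}$, which is too weak, so passing through the rank is essential.)

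For part (ii), with $\tfrac12\le\alpha<1$, I would first write $A_{\alpha}(G)=(1-\alpha)Q(G)+(2\alpha-1)D(G)$, both coefficients again nonnegative, so that $S_{k}(A_{\alpha}(G))\le(1-\alpha)S_{k}(Q(G))+(2\alpha-1)S_{k}(D(G))$ by Theorem \ref{th1,1}; since the spectrum of a real symmetric matrix majorizes its main diagonal, $S_{k}(D(G))\le S_{k}(Q(G))$, and hence $S_{k}(A_{\alpha}(G))\le\alpha S_{k}(Q(G))$. Using the splitting $Q(G)=Q(T)+Q(H)$ from the first paragraph, $S_{k}(Q(G))\le S_{k}(Q(T))+S_{k}(Q(H))$; the tree $T$ is bipartite, so $Q(T)$ and $L(T)$ have the same spectrum and Lemma \ref{le2,6} gives $S_{k}(Q(T))=S_{k}(L(T))\le n+2k-2$, while positive semidefiniteness of $Q(H)$ gives $S_{k}(Q(H))\le tr(Q(H))=2e(H)=2(m-n+1)$. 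Adding, $S_{k}(Q(G))\le(n+2k-2)+2(m-n+1)=2k+2m-n$, and therefore $S_{k}(A_{\alpha}(G))\le\alpha(2k+2m-n)$, as required. When $G$ is itself a tree both bounds collapse to Theorem \ref{th4,2}, a useful consistency check.
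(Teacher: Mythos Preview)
Your proof is correct and follows the same overall scheme as the paper: split $A_\alpha(G)$ into $A_\alpha(T)$ plus the contribution from the $m-n+1$ surplus edges, apply Fan's inequality (Theorem~\ref{th1,1}), and invoke Theorem~\ref{th4,2} on the tree. The only real difference is in how the surplus-edge piece is handled. You keep the extra edges together as one graph $H$ and then work to show $S_k(A_\alpha(H))\le e(H)$, which sends you through an energy/rank argument for $S_k(A(H))\le e(H)$ that you call the ``genuinely non-routine ingredient''. The paper instead peels the edges off one at a time: writing $A_\alpha(G)=A_\alpha(T)+\sum_{e\in E(H)}A_\alpha\bigl(K_2\cup(n-2)K_1\bigr)$ and applying Fan repeatedly reduces everything to the spectrum of $A_\alpha(K_2\cup(n-2)K_1)$, which is $\{1,\,2\alpha-1,\,0,\dots,0\}$; this immediately yields $S_k=1$ when $0\le\alpha<\tfrac12$ (for $k\le n-1$) and $S_k=2\alpha$ when $\tfrac12\le\alpha<1$ and $k\ge2$. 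So what you flagged as the main obstacle becomes a one-line computation in the paper's framing. Your route is perfectly valid and your Cauchy--Schwarz/rank bound $\varepsilon(A(H))\le 2e(H)$ is a nice self-contained fact, but the edge-by-edge decomposition is the shortcut you were missing.
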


\begin{proof} {\normalfont (i)} Let $T$ be a spanning tree of $G$. If $0\leq \alpha< \frac{1}{2}$, by Theorems \ref{th1,1} and \ref{th4,2}, we have
 \begin{eqnarray*}
S_k(A_{\alpha}(G))& \leq & S_k(A_{\alpha}(T))+(m-n+1)S_k(A_{\alpha}(K_2\cup (n-2)K_1))\\
& = & S_k(A_{\alpha}(T))+m-n+1\\
& \leq & \begin{dcases}
\alpha(n+2k-2)+(1-2\alpha)\sqrt{k(n-1)}+m-n+1, & \text{if}\,\, 1\leq k \leq \beta';\\
\alpha(n+2k-2)+(1-2\alpha)\sqrt{\beta' (n-1)}+m-n+1, & \text{if}\,\,\beta'< k \leq n-\beta';\\
\alpha(n+2k-2)+(1-2\alpha)\sqrt{(n-k)(n-1)}+m-n+1, & \text{if}\,\, n-\beta'< k \leq n.
\end{dcases}
\end{eqnarray*}

{\normalfont (ii)} If $\frac{1}{2}\leq \alpha< 1$, by Theorems \ref{th1,1} and \ref{th4,2}, we have
\begin{eqnarray*}
S_k(A_{\alpha}(G))& \leq & S_k(A_{\alpha}(T))+(m-n+1)S_k(A_{\alpha}(K_2\cup (n-2)K_1))\\
& = & S_k(A_{\alpha}(T))+2\alpha(m-n+1)\\
& \leq & \alpha(n+2k-2)+2\alpha (m-n+1)\\
& = & \alpha(2k+2m-n)
\end{eqnarray*}
for $2\leq k\leq n$.

This completes the proof. $\Box$
\end{proof}

\begin{theorem}\label{th4,4} %------
Let $P_n$ be a path with $n$ vertices.

{\normalfont (i)}If $0\leq \alpha < \frac{1}{2}$, then
\begin{eqnarray*}
S_k(A_{\alpha}(P_n)) & \leq &
2\alpha k+\alpha-1+\alpha \csc\frac{\pi}{2n} \sin\frac{(2k+1)\pi}{2n} \\
& & +(1-2\alpha)\csc\frac{\pi}{2(n+1)} \sin\frac{(2k+1)\pi}{2(n+1)}
\end{eqnarray*}
for $1\leq  k \leq n$.

{\normalfont (ii)}If $\frac{1}{2}\leq \alpha \leq 1$, then
$$S_k(A_{\alpha}(P_n))\leq 2\alpha k+(1-\alpha)\left(\csc\frac{\pi}{2n} \sin\frac{(2k+1)\pi}{2n}-1\right)$$
for $1\leq  k \leq n$.
\end{theorem}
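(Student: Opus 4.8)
The plan is to write $A_\alpha(P_n)$ as a nonnegative linear combination of matrices whose $k$-largest-eigenvalue sums are known in closed form, and then apply Fan's inequality (Theorem~\ref{th1,1}). The relevant building blocks are $A(P_n)$, $Q(P_n)$ and $D(P_n)$. The eigenvalues of $A(P_n)$ are $2\cos\frac{i\pi}{n+1}$ for $i=1,\dots,n$; and since $P_n$ is bipartite, $Q(P_n)$ and $L(P_n)$ are cospectral (Proposition~2.5 in \cite{CRS}), so the eigenvalues of $Q(P_n)$ are $2-2\cos\frac{i\pi}{n}$ for $i=0,\dots,n-1$, which after the substitution $j=n-i$ becomes the list $\{\,2+2\cos\frac{j\pi}{n}:j=1,\dots,n\,\}$. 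In each family the listed values are strictly decreasing in the index, so the $k$ largest eigenvalues correspond to indices $1,\dots,k$, and Lemma~\ref{le2,12} (used with parameter $n+1$ for $A$ and with parameter $n$ for $Q$) yields
$$S_k(A(P_n))=\csc\frac{\pi}{2(n+1)}\sin\frac{(2k+1)\pi}{2(n+1)}-1,\qquad S_k(Q(P_n))=2k+\csc\frac{\pi}{2n}\sin\frac{(2k+1)\pi}{2n}-1,$$
both valid for $1\le k\le n$.

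For part~(i), where $0\le\alpha<\frac12$, I would use the identity $A_\alpha(P_n)=\alpha\,Q(P_n)+(1-2\alpha)A(P_n)$, in which both coefficients are nonnegative. Theorem~\ref{th1,1} gives $S_k(A_\alpha(P_n))\le\alpha\,S_k(Q(P_n))+(1-2\alpha)\,S_k(A(P_n))$; inserting the two closed forms above and collecting terms — the constants combine to $-\alpha-(1-2\alpha)=\alpha-1$ — produces exactly the stated bound.

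For part~(ii), where $\frac12\le\alpha\le1$, the appropriate decomposition is $A_\alpha(P_n)=(1-\alpha)Q(P_n)+(2\alpha-1)D(P_n)$, which is valid because $Q=D+A$ and $2\alpha-1\ge0$. Theorem~\ref{th1,1} then gives $S_k(A_\alpha(P_n))\le(1-\alpha)S_k(Q(P_n))+(2\alpha-1)S_k(D(P_n))$; since every vertex of $P_n$ has degree at most $2$, we have $S_k(D(P_n))\le 2k$ for all $1\le k\le n$. Substituting this bound and the formula for $S_k(Q(P_n))$, the linear parts combine as $2(1-\alpha)k+2(2\alpha-1)k=2\alpha k$ and the remainder collapses to $(1-\alpha)\bigl(\csc\frac{\pi}{2n}\sin\frac{(2k+1)\pi}{2n}-1\bigr)$, which is the claimed inequality.

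None of these steps is conceptually difficult: the argument is just a careful use of Fan's inequality on top of known path spectra. The points requiring attention are the identification of the $Q(P_n)$-spectrum via bipartiteness, the reindexing that exhibits it as a strictly decreasing list (so that the $k$ largest eigenvalues are the first $k$), and the elementary trigonometric bookkeeping that combines the two $\csc$–$\sin$ expressions with the correct $\alpha$-dependent coefficients. I expect this bookkeeping, rather than any genuine obstacle, to be the main part of writing out the proof.
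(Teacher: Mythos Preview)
Your proposal is correct and follows essentially the same approach as the paper: the same two decompositions $A_\alpha=\alpha Q+(1-2\alpha)A$ for $\alpha<\tfrac12$ and $A_\alpha=(1-\alpha)Q+(2\alpha-1)D$ for $\alpha\ge\tfrac12$, combined with Fan's inequality and Lemma~\ref{le2,12}. You are in fact slightly more careful than the paper in writing $S_k(D(P_n))\le 2k$ rather than equality (which fails for $k\in\{n-1,n\}$ but is harmless since only the upper bound is needed).
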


\begin{proof}{\normalfont (i)} Since $A_{\alpha}(P_n)=\alpha Q(P_n)+(1-2\alpha)A(P_n)$ for $0\leq \alpha< \frac{1}{2}$, by Theorem \ref{th1,1} and Lemma \ref{le2,12}, we have
\begin{eqnarray*}
S_k(A_{\alpha}(P_n)) & \leq & \alpha S_k(Q(P_n))+(1-2\alpha)S_k(A(P_n))\\
& = &  2\alpha\sum\limits_{i=1}^{k}\left(1+\cos\frac{i\pi}{n}\right)+2(1-2\alpha)\sum\limits_{i=1}^{k}\cos \frac{i\pi}{n+1}\\
& = & 2 \alpha k+\alpha\left(\csc\frac{\pi}{2n} \sin\frac{(2k+1)\pi}{2n}-1\right) \\
& & +(1-2\alpha)\left(\csc\frac{\pi}{2(n+1)} \sin\frac{(2k+1)\pi}{2(n+1)}-1\right)\\
& = & 2\alpha k+\alpha-1+\alpha \csc\frac{\pi}{2n} \sin\frac{(2k+1)\pi}{2n} \\
& & +(1-2\alpha)\csc\frac{\pi}{2(n+1)} \sin\frac{(2k+1)\pi}{2(n+1)}
\end{eqnarray*}
for $1\leq k \leq n$.

{\normalfont (ii)} Since $A_{\alpha}(P_n)=(1-\alpha) Q(P_n)+(2\alpha-1)D(P_n)$ for $\frac{1}{2}\leq \alpha< 1$, by Theorem \ref{th1,1} and Lemma  \ref{le2,12}, we have
\begin{eqnarray*}
S_k(A_{\alpha}(P_n)) & \leq & (1-\alpha)S_k(Q(P_n))+(2\alpha-1)S_k(D(P_n))\\
& = &  2(1-\alpha)\sum\limits_{i=1}^{k}\left(1+\cos\frac{i\pi}{n}\right)+2(2\alpha-1)k\\
& = & 2k(1-\alpha)+(1-\alpha)\left(\csc\frac{\pi}{2n} \sin\frac{(2k+1)\pi}{2n}-1\right) \\
& & +2(2\alpha-1)k\\
& = & 2\alpha k+(1-\alpha)\left(\csc\frac{\pi}{2n} \sin\frac{(2k+1)\pi}{2n}-1\right)
\end{eqnarray*}
for $1\leq k \leq n$. The proof is completed. $\Box$
\end{proof}

\begin{corollary}\label{cor4,2} %------
Let $P_n$ be a path with $n$ vertices. If $0\leq \alpha <1$, then $S_k(A_{\alpha}(P_n)) < 2k$ for $1\leq k \leq n$.
\end{corollary}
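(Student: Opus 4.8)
The plan is to obtain Corollary \ref{cor4,2} as an immediate consequence of Theorem \ref{th4,4}, by rewriting the trigonometric expressions occurring there through Lemma \ref{le2,12} and then invoking the elementary estimate $\cos\frac{i\pi}{m}<1$ for $1\le i\le k\le n$ and $m\in\{n,n+1\}$. This holds because the argument $\frac{i\pi}{m}$ then lies in $(0,\pi]$, so every summand is strictly below $1$; consequently $\sum_{i=1}^{k}\cos\frac{i\pi}{m}<k$ whenever $1\le k\le m$, with the inequality strict. This single observation, applied with $m=n$ and $m=n+1$, will drive both cases.

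For $\frac{1}{2}\le\alpha<1$ I would begin from Theorem \ref{th4,4}(ii). By Lemma \ref{le2,12} the bracketed quantity $\csc\frac{\pi}{2n}\sin\frac{(2k+1)\pi}{2n}-1$ equals $2\sum_{i=1}^{k}\cos\frac{i\pi}{n}$, so that $S_k(A_{\alpha}(P_n))\le 2\alpha k+2(1-\alpha)\sum_{i=1}^{k}\cos\frac{i\pi}{n}$. Since $1-\alpha>0$ and $\sum_{i=1}^{k}\cos\frac{i\pi}{n}<k$, the right-hand side is strictly smaller than $2\alpha k+2(1-\alpha)k=2k$, which is the desired bound.

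For $0\le\alpha<\frac{1}{2}$ I would substitute the two instances of Lemma \ref{le2,12} --- one with $n$ and one with $n+1$, the latter being permissible because $k\le n<n+1$ --- into the bound of Theorem \ref{th4,4}(i). After the substitution the constant terms cancel ($\alpha-1+\alpha+(1-2\alpha)=0$), leaving $S_k(A_{\alpha}(P_n))\le 2\alpha k+2\alpha\sum_{i=1}^{k}\cos\frac{i\pi}{n}+2(1-2\alpha)\sum_{i=1}^{k}\cos\frac{i\pi}{n+1}$, which is exactly $\alpha S_k(Q(P_n))+(1-2\alpha)S_k(A(P_n))$, as one would expect. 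Using $\alpha\ge0$, $1-2\alpha>0$ and that both cosine sums are $<k$, the right-hand side is strictly less than $2\alpha k+2\alpha k+2(1-2\alpha)k=2k$.

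There is no serious obstacle: the corollary is essentially a clean-up of Theorem \ref{th4,4}. The points needing attention are the bookkeeping of the constant terms in case (i), checking that each cosine argument lies in $(0,\pi]$ so that strictness is genuine (this is where the restriction $k\le n$ enters), and noting that the hypothesis $\alpha<1$ is essential: for $\alpha=1$ one has $A_1(P_n)=D(P_n)$ and $S_k(D(P_n))=2k$ for $1\le k\le n-2$, so the strict inequality would fail. I would therefore split into the two cases above, apply Lemma \ref{le2,12} and the cosine estimate, and conclude.
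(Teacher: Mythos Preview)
Your proposal is correct and takes essentially the same approach the paper intends: the corollary is an immediate consequence of Theorem~\ref{th4,4}, and you carry this out cleanly by unwinding the trigonometric bounds via Lemma~\ref{le2,12} back to the cosine sums $\sum_{i=1}^{k}\cos\frac{i\pi}{n}$ and $\sum_{i=1}^{k}\cos\frac{i\pi}{n+1}$, then using $\cos\theta<1$ for $\theta\in(0,\pi]$. The bookkeeping of constants in case~(i) is right, and your remark that strictness in that case is guaranteed by $1-2\alpha>0$ (so the $\alpha=0$ endpoint causes no trouble) is exactly the point to note.
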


\section{\large Lower bounds on the sum of the largest $A_{\alpha}$-eigenvalues}

\begin{theorem}\label{th5,1} %------
 Let $G$ be a graph with maximum degree $\Delta$.

{\normalfont (i)}If $0\leq \alpha \leq \frac{1}{2}$, then
$$S_k(A_{\alpha}(G))\geq (1-\alpha) S_k(Q(G))+(2\alpha-1)k\Delta.$$

{\normalfont (ii)}If $\frac{1}{2}\leq \alpha \leq 1$, then
$$S_k(A_{\alpha}(G))\geq \alpha S_k(Q(G))+(1-2\alpha)S_k(A(G)).$$

If $G$ is a regular graph, then the equality in the above inequalities must hold.

\end{theorem}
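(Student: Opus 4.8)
The plan is to reduce both inequalities to Fan's inequality (Theorem \ref{th1,1}) by writing $A_{\alpha}(G)$ as a sum of two suitably scaled symmetric matrices, where the decomposition is chosen so that on the relevant range of $\alpha$ the scalar multiplying the ``extra'' matrix is \emph{nonnegative}; then $S_k$ is homogeneous ($S_k(cM)=cS_k(M)$ for $c\ge 0$) and the rearrangement goes the right way. The only other ingredient needed is the elementary bound $S_k(D(G))\le k\Delta$, which holds because the eigenvalues of the diagonal matrix $D(G)$ are precisely the vertex degrees.

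For part (i), I would first record the identity $A_{\alpha}(G)=(1-\alpha)Q(G)+(2\alpha-1)D(G)$, verified directly from $Q(G)=A(G)+D(G)$ and $A_{\alpha}(G)=\alpha D(G)+(1-\alpha)A(G)$. Rearranging gives $(1-\alpha)Q(G)=A_{\alpha}(G)+(1-2\alpha)D(G)$, and for $0\le\alpha\le\frac12$ the coefficient $1-2\alpha\ge 0$. Applying Theorem \ref{th1,1} to the right-hand side yields $(1-\alpha)S_k(Q(G))\le S_k(A_{\alpha}(G))+(1-2\alpha)S_k(D(G))\le S_k(A_{\alpha}(G))+(1-2\alpha)k\Delta$, and rearranging gives the stated lower bound. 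For part (ii) I would instead use the identity $A_{\alpha}(G)=\alpha Q(G)+(1-2\alpha)A(G)$, i.e. $\alpha Q(G)=A_{\alpha}(G)+(2\alpha-1)A(G)$, where now $2\alpha-1\ge 0$ on $[\frac12,1]$; Theorem \ref{th1,1} then gives $\alpha S_k(Q(G))\le S_k(A_{\alpha}(G))+(2\alpha-1)S_k(A(G))$, and rearranging gives the claim.

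For the equality statement, suppose $G$ is $r$-regular. Then $D(G)=rI$, so $A_{\alpha}(G),Q(G),A(G)$ are simultaneously diagonalizable with $\lambda_i(A_{\alpha}(G))=\alpha r+(1-\alpha)\lambda_i(A(G))$ and $\lambda_i(Q(G))=r+\lambda_i(A(G))$, whence $S_k(A_{\alpha}(G))=k\alpha r+(1-\alpha)S_k(A(G))$ and $S_k(Q(G))=kr+S_k(A(G))$. Substituting these together with $\Delta=r$ into the right-hand side of (i) gives $(1-\alpha)(kr+S_k(A(G)))+(2\alpha-1)kr=k\alpha r+(1-\alpha)S_k(A(G))=S_k(A_{\alpha}(G))$, and similarly for (ii) the right-hand side collapses to $\alpha kr+(1-\alpha)S_k(A(G))=S_k(A_{\alpha}(G))$; so both inequalities become equalities.

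I do not expect a genuine obstacle here: the argument is a two-line application of Fan's inequality in each regime plus the trivial degree bound. The one point requiring care is bookkeeping of the signs of $1-2\alpha$ and $2\alpha-1$, so that $S_k$-homogeneity and Fan's inequality are invoked only where valid; the single design choice is which of the two decompositions of $A_{\alpha}(G)$ (via $Q$ and a multiple of $D$, versus via $Q$ and a multiple of $A$) to use in each of the two ranges of $\alpha$.
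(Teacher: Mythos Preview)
Your proof is correct and rests on the same ingredients as the paper's: Fan's inequality together with the decompositions of $A_{\alpha}(G)$ in terms of $Q(G)$ and either $D(G)$ or $A(G)$, plus the trivial bound $S_k(D(G))\le k\Delta$. The only difference is organizational: the paper first writes $Q(G)=A_{\alpha}(G)+A_{1-\alpha}(G)$ and applies Fan twice (once to this identity, once to the decomposition $A_{1-\alpha}(G)=\alpha Q(G)+(1-2\alpha)D(G)$ or $A_{1-\alpha}(G)=(1-\alpha)Q(G)+(2\alpha-1)A(G)$), whereas you apply Fan once directly to $(1-\alpha)Q(G)=A_{\alpha}(G)+(1-2\alpha)D(G)$ (resp.\ $\alpha Q(G)=A_{\alpha}(G)+(2\alpha-1)A(G)$); your route is a slight streamlining of the same argument.
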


\begin{proof}{\normalfont (i)} If $0\leq \alpha \leq \frac{1}{2}$, then $\frac{1}{2}\leq 1-\alpha \leq 1$. It follows that $A_{1-\alpha}(G)=\alpha Q(G)+(1-2\alpha)D(G)$. Since $A_{\alpha}(G)+A_{1-\alpha}(G)=Q(G)$, by Theorem \ref{th1,1}, we have
\begin{eqnarray*}
S_{k}(A_{\alpha}(G)) & \geq &  S_{k}(Q(G))-S_{k}(A_{1-\alpha}(G))\\
& \geq & S_{k}(Q(G))-\alpha S_{k}(Q(G))-(1-2\alpha)S_{k}(D(G))\\
& \geq & (1-\alpha) S_k(Q(G))+(2\alpha-1)k\Delta.
\end{eqnarray*}

{\normalfont (ii)} If $\frac{1}{2}\leq \alpha \leq 1$, then $0 \leq 1-\alpha \leq \frac{1}{2}$. It follows that $A_{1-\alpha}(G)=(1-\alpha) Q(G)+(2\alpha-1)A(G)$. Since $A_{\alpha}(G)+A_{1-\alpha}(G)=Q(G)$, by Theorem \ref{th1,1}, we have
\begin{eqnarray*}
S_{k}(A_{\alpha}(G)) & \geq &  S_{k}(Q(G))-S_{k}(A_{1-\alpha}(G))\\
& \geq & S_{k}(Q(G))-(1-\alpha) S_{k}(Q(G))-(2\alpha-1)S_{k}(A(G))\\
& \geq & \alpha S_k(Q(G))+(1-2\alpha)S_k(A(G)).
\end{eqnarray*}
This completes the proof. $\Box$
\end{proof}

\begin{corollary}\label{cor5,1} %------
Let $P_n$ be a path with $n$ vertices.

{\normalfont (i)}If $0\leq \alpha \leq \frac{1}{2}$, then
$$S_k(A_{\alpha}(P_n)) \geq
2\alpha k+(1-\alpha)\left(\csc\frac{\pi}{2n} \sin\frac{(2k+1)\pi}{2n}-1\right)$$
for $1\leq  k \leq n$.

{\normalfont (ii)}If $\frac{1}{2}\leq \alpha \leq 1$, then
\begin{eqnarray*}
S_k(A_{\alpha}(P_n)) & \geq &
2\alpha k+\alpha-1+\alpha \csc\frac{\pi}{2n} \sin\frac{(2k+1)\pi}{2n} \\
& & +(1-2\alpha)\csc\frac{\pi}{2(n+1)} \sin\frac{(2k+1)\pi}{2(n+1)}
\end{eqnarray*}
for $1\leq  k \leq n$.
\end{corollary}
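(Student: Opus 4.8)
The plan is to read the two inequalities off Theorem \ref{th5,1} after inserting the eigenvalue data for $P_n$ already compiled in the proof of Theorem \ref{th4,4}. Recall from there that $S_k(Q(P_n))=2\sum_{i=1}^{k}\bigl(1+\cos\tfrac{i\pi}{n}\bigr)$ and $S_k(A(P_n))=2\sum_{i=1}^{k}\cos\tfrac{i\pi}{n+1}$, and that, by Lemma \ref{le2,12} applied with $n$ and with $n+1$, these equal $S_k(Q(P_n))=2k+\csc\tfrac{\pi}{2n}\sin\tfrac{(2k+1)\pi}{2n}-1$ and $S_k(A(P_n))=\csc\tfrac{\pi}{2(n+1)}\sin\tfrac{(2k+1)\pi}{2(n+1)}-1$. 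Also note $\Delta(P_n)=2$ (for $n\ge 3$).

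For part (i), with $0\le\alpha\le\tfrac12$, Theorem \ref{th5,1}(i) gives $S_k(A_\alpha(P_n))\ge(1-\alpha)S_k(Q(P_n))+2k(2\alpha-1)$; substituting the closed form for $S_k(Q(P_n))$, the terms linear in $k$ sum to $2k\bigl[(1-\alpha)+(2\alpha-1)\bigr]=2\alpha k$ and the rest is exactly $(1-\alpha)\bigl(\csc\tfrac{\pi}{2n}\sin\tfrac{(2k+1)\pi}{2n}-1\bigr)$, as claimed. For part (ii), with $\tfrac12\le\alpha\le 1$, Theorem \ref{th5,1}(ii) gives $S_k(A_\alpha(P_n))\ge\alpha S_k(Q(P_n))+(1-2\alpha)S_k(A(P_n))$; substituting both closed forms, the linear-in-$k$ term is $2\alpha k$, the constants combine to $-\alpha-(1-2\alpha)=\alpha-1$, and the two oscillatory terms are $\alpha\csc\tfrac{\pi}{2n}\sin\tfrac{(2k+1)\pi}{2n}$ and $(1-2\alpha)\csc\tfrac{\pi}{2(n+1)}\sin\tfrac{(2k+1)\pi}{2(n+1)}$, which is the stated bound.

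There is no real obstacle here beyond the arithmetic, as the content is entirely in Theorem \ref{th5,1} and Lemma \ref{le2,12}; the only item needing a word of care is the use of $\Delta(P_n)=2$ in part (i), valid for $n\ge 3$, with $n=1,2$ either trivial or covered by the fact that enlarging $\Delta$ only weakens that lower bound (because $2\alpha-1\le 0$ there). As a sanity check, at $\alpha=\tfrac12$ both bounds reduce to $k-\tfrac12+\tfrac12\csc\tfrac{\pi}{2n}\sin\tfrac{(2k+1)\pi}{2n}=\tfrac12 S_k(Q(P_n))=S_k(A_{1/2}(P_n))$, consistent with Theorem \ref{th4,4}.
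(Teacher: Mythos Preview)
Your proof is correct and follows exactly the route the paper intends: Corollary~\ref{cor5,1} is stated immediately after Theorem~\ref{th5,1} with no separate proof, so the implied argument is precisely to substitute the closed forms for $S_k(Q(P_n))$ and $S_k(A(P_n))$ (already computed via Lemma~\ref{le2,12} in the proof of Theorem~\ref{th4,4}) into the two lower bounds of Theorem~\ref{th5,1}. Your remark that replacing the true $\Delta(P_n)$ by $2$ for $n\le 2$ only weakens the bound (since $2\alpha-1\le 0$) is a nice touch that the paper does not mention.
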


\begin{theorem}\label{th5,2} %------
Let $G$ be a $r$-regular graph.

{\normalfont (i)} Let $t$ be the number of vertex-disjoint cliques in $G$. If $0\leq \alpha \leq 1$, then
$$S_k(A_{\alpha}(G))\geq \alpha kr+(1-\alpha)(r-k+1)$$
for $1\leq k \leq t+1$.

{\normalfont (ii)} Let $g_1\geq g_2\geq \cdots \geq g_c$ and $C_{g_1}$, $C_{g_2}$, \ldots, $C_{g_{c}}$ be the vertex-disjoint induced cycles of length even in $G$. If $0\leq \alpha \leq 1$, then
$$S_k(A_{\alpha}(G))\geq (\alpha k+1-\alpha)r+2(1-\alpha)\sum\limits_{i=1}^{k-1}(1-\frac{4}{g_{i}})$$
for $1\leq k \leq c+1$.
\end{theorem}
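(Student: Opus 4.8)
The plan is to first strip off the parameter $\alpha$ using $r$-regularity, and then to produce the two required lower bounds on $S_k(A(G))$ by feeding explicit orthonormal tuples into the variational formula in Lemma \ref{le2,7}. Since $G$ is $r$-regular, $D(G)=rI_n$, so $A_\alpha(G)=\alpha rI_n+(1-\alpha)A(G)$; hence $\lambda_i(A_\alpha(G))=\alpha r+(1-\alpha)\lambda_i(A(G))$ for all $i$ and
$$S_k(A_\alpha(G))=\alpha kr+(1-\alpha)S_k(A(G)).$$
As $0\le\alpha\le1$, it therefore suffices to prove $S_k(A(G))\ge r-k+1$ in (i) and $S_k(A(G))\ge r+2\sum_{i=1}^{k-1}\bigl(1-\tfrac{4}{g_i}\bigr)$ in (ii); substituting into the identity above yields precisely the stated bounds. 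For each of these I will invoke Lemma \ref{le2,7}, so it is enough to exhibit a real orthonormal $k$-tuple $\{X_1,\dots,X_k\}$ whose total Rayleigh quotient $\sum_{i=1}^{k}\langle A(G)X_i,X_i\rangle$ reaches the target. (If $r=0$ then $A_\alpha(G)=0$ and both right-hand sides are $\le 0$, so both inequalities hold trivially; from now on assume $r\ge1$, so $G$ has no isolated vertex and every clique of $G$ contains an edge.)

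For part (i), let $Q_1,\dots,Q_t$ be the given pairwise vertex-disjoint cliques and fix $k$ with $1\le k\le t+1$. Put $X_1=\tfrac{1}{\sqrt n}\mathbf 1$, so $\langle A(G)X_1,X_1\rangle=\tfrac1n\langle A(G)\mathbf 1,\mathbf 1\rangle=\tfrac{2m}{n}=r$. For $2\le i\le k$, pick an edge $u_iw_i$ inside $Q_{i-1}$ and put $X_i=\tfrac{1}{\sqrt2}(e_{u_i}-e_{w_i})$, so $\langle A(G)X_i,X_i\rangle=\tfrac12\bigl(A_{u_iu_i}-2A_{u_iw_i}+A_{w_iw_i}\bigr)=-1$. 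These vectors form an orthonormal system: every $X_i$ with $i\ge2$ has zero entry-sum, hence is orthogonal to $X_1$; the $X_i$ with $i\ge2$ are supported on the pairwise disjoint pairs $\{u_i,w_i\}$, hence mutually orthogonal; and each is a unit vector. Lemma \ref{le2,7} then gives $S_k(A(G))\ge r+(k-1)(-1)=r-k+1$, completing part (i).

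For part (ii), let $C_{g_1},\dots,C_{g_c}$ be the given pairwise vertex-disjoint induced even cycles with $g_1\ge\cdots\ge g_c$, and fix $k$ with $1\le k\le c+1$. Again $X_1=\tfrac{1}{\sqrt n}\mathbf 1$ contributes $r$. For $2\le i\le k$, write $C_{g_{i-1}}=v_1v_2\cdots v_{g_{i-1}}v_1$ and let $X_i$ be the unit scalar multiple of the vector supported on this cycle that equals $+1$ on $v_1,\dots,v_{g_{i-1}/2}$ and $-1$ on $v_{g_{i-1}/2+1},\dots,v_{g_{i-1}}$. Since it has equally many $+1$'s and $-1$'s, $X_i\perp X_1$; since the cycles are vertex-disjoint, the $X_i$ with $i\ge2$ are mutually orthogonal; and each is a unit vector. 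Because $C_{g_{i-1}}$ is \emph{induced}, the only edges of $G$ with both endpoints on it are its $g_{i-1}$ cyclic edges, of which exactly two (at the two sign changes) join vertices of opposite sign and the other $g_{i-1}-2$ join vertices of equal sign; hence $\langle A(G)X_i,X_i\rangle=\tfrac{2}{g_{i-1}}\bigl((g_{i-1}-2)-2\bigr)=2\bigl(1-\tfrac{4}{g_{i-1}}\bigr)$. By Lemma \ref{le2,7}, $S_k(A(G))\ge r+\sum_{i=2}^{k}2\bigl(1-\tfrac{4}{g_{i-1}}\bigr)=r+2\sum_{i=1}^{k-1}\bigl(1-\tfrac{4}{g_i}\bigr)$, which finishes part (ii).

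The only step that genuinely requires an idea is the choice of the even-cycle test vector in part (ii): it has to be orthogonal to the all-ones (Perron) vector and at the same time keep the Rayleigh quotient under control, and the computation of that quotient relies on the cycle being induced so that chords cannot spoil the tally of $\pm1$ products along its edges. Everything else is bookkeeping. It is perhaps worth remarking that both estimates could be sharpened by packing more mutually orthogonal $\mathbf 1$-orthogonal vectors into each clique (taking a basis of the $(-1)$-eigenspace of $K_{|Q_i|}$, extended by zeros) or into each long even cycle; but to obtain the stated ranges $k\le t+1$ and $k\le c+1$ it is enough to use a single extra vector per clique or cycle.
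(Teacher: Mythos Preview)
Your proof is correct and follows essentially the same route as the paper: use $r$-regularity to reduce to a bound on $S_k(A(G))$, then apply Lemma~\ref{le2,7} with $X_1$ the normalized all-ones vector together with one zero-sum unit test vector per clique (part (i)) or per induced even cycle (part (ii)). The only differences are cosmetic: in (i) you take the simplest possible zero-sum vector on each clique (supported on a single edge) rather than a generic one on the full clique as in the paper, and you handle the degenerate case $r=0$ explicitly; the structure of the argument is otherwise identical.
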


\begin{proof} {\normalfont (i)} Let $X_1$ be the vector with all entries equal to $1$. Then $X_1$ is an eigenvector corresponding to $\lambda_{1}(A(G))$. Let $K_{\omega_1}$, $K_{\omega_2}$, \ldots, $K_{\omega_t}$ be the vertex-disjoint cliques in $G$. Then we take
\begin{eqnarray*}
X_2 & = & (\underbrace{x_{1(2)}, x_{2(2)}, \ldots, x_{\omega_{1(2)}}}_{\omega_1}, 0, \ldots, 0),\\
X_3 & = & (\underbrace{0, \ldots, 0}_{\omega_1}, \underbrace{x_{1(3)}, x_{2(3)}, \ldots, x_{\omega_{2(3)}}}_{\omega_2}, 0, \ldots, 0),\cdots,\\
X_{t+1} & = & (\underbrace{0, \ldots, 0}_{\omega_1}, \underbrace{0, \ldots, 0}_{\omega_2}, \ldots, \underbrace{x_{1(t+1)}, x_{2(t+1)}, \ldots, x_{ \omega_{t(t+1)}}}_{\omega_{t}}, 0, \ldots, 0)
\end{eqnarray*}
satisfying
$$
\begin{dcases}
x_{1(s)}+ x_{2(s)}+ \cdots+ x_{\omega_{s-1(s)}} =  0\\
x_{1(s)}^2+ x_{2(s)}^2+ \cdots+ x_{\omega_{s-1(s)}}^2  =  1
\end{dcases}
$$
for $s=2, 3, \ldots, t+1$. Thus we have $2\sum\limits_{i<j}x_{i(s)}x_{j(s)}=-1$. Since $G$ is a $r$-regular graph and the vectors $X_1, X_2, \ldots, X_{t+1}$ are orthogonal, by Lemma \ref{le2,7}, we have
\begin{eqnarray*}
S_k(A_{\alpha}(G)) & = & \alpha kr +(1-\alpha)S_{k}(A(G))\\
& = & \alpha kr +(1-\alpha)\max\sum\limits_{i=1}^{k}\langle A(G)X_i, X_i\rangle\\
& \geq & \alpha kr +(1-\alpha)\sum\limits_{i=1}^{k}\langle A(G)X_i, X_i\rangle\\
& = & \alpha kr +(1-\alpha)(r+2(k-1)\sum\limits_{i<j}x_{i(k-1)}x_{j(k-1)})\\
& = & \alpha kr +(1-\alpha)(r-k+1).
\end{eqnarray*}
for $1\leq k \leq t+1$.

{\normalfont (ii)} Let $X_1$ be the vector with all entries equal to $1$. Then $X_1$ is an eigenvector corresponding to $\lambda_{1}(A(G))$. Let $g_1\geq g_2\geq \cdots \geq g_c$ and $C_{g_1}$, $C_{g_2}$, \ldots, $C_{g_{c}}$ be the vertex-disjoint induced cycles of length even in $G$. Then we take
\begin{eqnarray*}
X_2 & = & \frac{1}{\sqrt{g_1}}(\underbrace{1, \ldots, 1}_{\frac{g_1}{2}},\underbrace{-1, \ldots, -1}_{\frac{g_1}{2}}, \ldots, 0),\\
X_3 & = & \frac{1}{\sqrt{g_2}} (\underbrace{0, \ldots, 0}_{g_1}, \underbrace{1,\ldots,1}_{\frac{g_2}{2}},\underbrace{-1, \ldots, -1}_{\frac{g_2}{2}}, 0, \ldots, 0),\cdots,
\end{eqnarray*}
\begin{eqnarray*}
X_{c+1} & = & \frac{1}{\sqrt{g_c}} (\underbrace{0, \ldots, 0}_{g_1}, \underbrace{0, \ldots, 0}_{g_2}, \ldots, \underbrace{1, \ldots, 1}_{\frac{g_{c}}{2}}, \underbrace{-1, \ldots, -1}_{\frac{g_{c}}{2}}, 0, \ldots, 0).
\end{eqnarray*}
Thus we have $2\sum\limits_{v_iv_j\in E(C_{g_s})}x_{i}x_{j}=2(1-\frac{4}{g_{s}})$ for $s=1, 2, \ldots, c $. Since $G$ is a $r$-regular graph and the vectors $X_1, X_2, \ldots, X_{c+1}$ are orthogonal, by Lemma \ref{le2,7}, we have
\begin{eqnarray*}
S_k(A_{\alpha}(G)) & = & \alpha kr +(1-\alpha)S_{k}(A(G))\\
& = & \alpha kr +(1-\alpha)\max\sum\limits_{i=1}^{k}\langle A(G)X_i, X_i\rangle\\
& \geq & \alpha kr +(1-\alpha)\sum\limits_{i=1}^{k}\langle A(G)X_i, X_i\rangle\\
& = & \alpha kr +(1-\alpha)(r+2\sum\limits_{s=1}^{k-1}\sum\limits_{v_iv_j\in E(C_{g_{s}})}x_{i}x_{j})\\
& = & \alpha kr +(1-\alpha)\left(r+2\sum\limits_{i=1}^{k-1}(1-\frac{4}{g_{i}})\right)\\
& = & (\alpha k+1-\alpha)r+2(1-\alpha)\sum\limits_{i=1}^{k-1}(1-\frac{4}{g_{i}})
\end{eqnarray*}
for $1\leq k \leq c+1$.

This completes the proof. $\Box$

\end{proof}

\begin{theorem}\label{th5,3} %------
Let $G$ be a connected bipartite graph with bipartition $V(G)=X \cup Y$, $|X|=s$ and $|Y|=t$. Let $m$ and $\beta$ be the number of edges and matching number of $G$, respectively. If $0\leq \alpha \leq 1$, then
$$S_k(A_{\alpha}(G))\geq \frac{\alpha m}{2}\left(\frac{1}{s}+\frac{1}{t}\right)+\frac{(1-\alpha)m}{\sqrt{st}}+(k-1)\left(\alpha -\frac{2(1-\alpha)\sqrt{st}}{s+t}\right)$$
for $1\leq k \leq \beta+1$.
\end{theorem}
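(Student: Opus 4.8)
The plan is to apply the variational formula of Lemma~\ref{le2,7}: I will exhibit an explicit orthonormal $k$-tuple $X_1,\dots,X_k$ and bound $\sum_{i=1}^{k}\langle A_\alpha(G)X_i,X_i\rangle$ from below by the right-hand side of the claimed inequality, one ``piece'' of the bound coming from each $X_i$.

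First I would take $X_1$ to be the unit vector equal to $\tfrac{1}{\sqrt{2s}}$ on every vertex of $X$ and to $\tfrac{1}{\sqrt{2t}}$ on every vertex of $Y$. Since $G$ is bipartite, $\sum_{v\in X}d_v=\sum_{v\in Y}d_v=m$, so $\langle D(G)X_1,X_1\rangle=\tfrac{m}{2}\bigl(\tfrac1s+\tfrac1t\bigr)$; and since every edge joins $X$ to $Y$, $\langle A(G)X_1,X_1\rangle=2m\cdot\tfrac{1}{2\sqrt{st}}=\tfrac{m}{\sqrt{st}}$. Hence $\langle A_\alpha(G)X_1,X_1\rangle=\tfrac{\alpha m}{2}\bigl(\tfrac1s+\tfrac1t\bigr)+\tfrac{(1-\alpha)m}{\sqrt{st}}$, which gives the first two terms of the bound. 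Next, fix a maximum matching $\mathcal M=\{u_1v_1,\dots,u_\beta v_\beta\}$ with $u_i\in X$, $v_i\in Y$, and for $1\le i\le k-1$ (available because $k-1\le\beta$) let $X_{i+1}$ be supported on $\{u_i,v_i\}$ with value $-\tfrac{\sqrt{s}}{\sqrt{s+t}}$ at $u_i$ and $\tfrac{\sqrt{t}}{\sqrt{s+t}}$ at $v_i$. These are unit vectors; they are mutually orthogonal because $\mathcal M$ is a matching (disjoint supports); and the coefficients are chosen precisely so that each $X_{i+1}$ is orthogonal to $X_1$. Since $u_i,v_i$ are matched we have $d_{u_i},d_{v_i}\ge1$, so $\langle D(G)X_{i+1},X_{i+1}\rangle=\tfrac{s}{s+t}d_{u_i}+\tfrac{t}{s+t}d_{v_i}\ge1$, while the only edge inside the support $\{u_i,v_i\}$ is $u_iv_i$, giving $\langle A(G)X_{i+1},X_{i+1}\rangle=-\tfrac{2\sqrt{st}}{s+t}$. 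Using $\alpha\ge0$, this yields $\langle A_\alpha(G)X_{i+1},X_{i+1}\rangle\ge\alpha-\tfrac{2(1-\alpha)\sqrt{st}}{s+t}$.

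Summing the $k$ lower bounds and applying Lemma~\ref{le2,7} gives
$$S_k(A_\alpha(G))\ \ge\ \frac{\alpha m}{2}\Bigl(\frac1s+\frac1t\Bigr)+\frac{(1-\alpha)m}{\sqrt{st}}+(k-1)\Bigl(\alpha-\frac{2(1-\alpha)\sqrt{st}}{s+t}\Bigr),$$
as desired.

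The parts needing care are the bookkeeping for orthogonality---checking that the single free parameter in each $X_{i+1}$ can be set to annihilate $\langle X_{i+1},X_1\rangle$ while keeping $\|X_{i+1}\|=1$, and that the $X_{i+1}$ are pairwise orthogonal---and confirming that nothing stronger than $d_{u_i},d_{v_i}\ge1$ is required for the degree term (it is not, since $u_i,v_i$ are endpoints of a matching edge). I do not expect a substantive obstacle beyond these routine verifications.
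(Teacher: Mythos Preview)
Your proposal is correct and follows essentially the same approach as the paper's own proof: both apply Lemma~\ref{le2,7} with the identical vector $X_1$ and with $X_2,\dots,X_k$ supported on pairs $(u_i,v_i)$ from the two sides, carrying weights $\pm\sqrt{s/(s+t)}$ and $\mp\sqrt{t/(s+t)}$. Your write-up is in fact a bit more careful---you make explicit the role of the matching (ensuring $u_iv_i\in E(G)$ and disjoint supports) and the inequality $d_{u_i},d_{v_i}\ge 1$, points the paper leaves implicit by writing an equality where only $\ge$ is directly justified.
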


\begin{proof} By the hypothesis, we take a set of orthonormal vectors as follows:
\begin{eqnarray*}
X_1 & = & \frac{1}{\sqrt{2}}(\underbrace{\frac{1}{\sqrt{s}},\ldots ,\frac{1}{\sqrt{s}}}_{s}, \underbrace{\frac{1}{\sqrt{t}},\ldots ,\frac{1}{\sqrt{t}}}_{t})\\
X_2 & = & \sqrt{\frac{s}{s+t}}(\underbrace{1, 0, \ldots, 0}_{s}, \underbrace{-\sqrt{\frac{t}{s}}, 0, \ldots, 0}_{t})
\end{eqnarray*}
\begin{eqnarray*}
X_3 & = & \sqrt{\frac{s}{s+t}}(\underbrace{0, 1, \ldots, 0}_{s}, \underbrace{0, -\sqrt{\frac{t}{s}}, 0, \ldots, 0}_{t}), \cdots,\\
X_{\beta+1} & = & \sqrt{\frac{s}{s+t}}(\underbrace{0, \ldots, 0, 1, 0, \ldots, 0}_{s}, \underbrace{0, \ldots, 0, -\sqrt{\frac{t}{s}}, 0, \ldots, 0}_{t}).
\end{eqnarray*}
By Lemma \ref{le2,7}, we have
\begin{eqnarray*}
S_k(A_{\alpha}(G)) & = & \max\sum\limits_{i=1}^{k}\langle A_{\alpha}(G)X_i, X_i\rangle\\
& \geq & \sum\limits_{i=1}^{k}\langle A_{\alpha}(G)X_i, X_i\rangle\\
& = & \sum\limits_{i=1}^{k} \sum\limits_{uv\in E(G)}(\alpha x_u^2+2(1-\alpha)x_ux_v+\alpha x_v^2)\\
& = & m \left(\frac{\alpha}{2s}+2(1-\alpha)\frac{1}{2\sqrt{st}}+\frac{\alpha}{2t}\right)\\
& & +(k-1)\left(\frac{\alpha s}{s+t}+2(1-\alpha)\frac{-\sqrt{st}}{s+t}+\frac{\alpha t}{s+t}\right)\\
& = & \frac{\alpha m}{2}\left(\frac{1}{s}+\frac{1}{t}\right)+\frac{(1-\alpha)m}{\sqrt{st}}+(k-1)\left(\alpha -\frac{2(1-\alpha)\sqrt{st}}{s+t}\right)
\end{eqnarray*}
for $1\leq k \leq \beta+1$. The proof is completed. $\Box$
\end{proof}

Let $M$ be a real symmetric partitioned matrix of order $n$ described in the following block form
\[  \begin{pmatrix}
M_{11} &  \cdots & M_{1t} \\
\vdots & \ddots & \vdots \\
M_{t1} &  \cdots & M_{tt}
\end{pmatrix}, \]
where the diagonal blocks $M_{ii}$ are $n_i\times n_i$ matrices for any $i \in \{1,2,\ldots,t\}$ and $n=n_1+\cdots+n_t$.
For any $i, j \in \{1,2,\ldots,t\}$, let $b_{ij}$ denote the average row
sum of $M_{ij}$ , i.e. $b_{ij}$ is the sum of all entries in $M_{ij}$ divided by the number of
rows. Then $\mathcal{B}(M)=(b_{ij})$ (simply by $\mathcal{B}$) is called the quotient matrix of $M$.

\begin{lemma}{\bf (\cite{H})}\label{le5,1} %------
Let $M$ be a symmetric partitioned matrix of order $n$ with eigenvalues $\xi_1\geq \xi_2 \geq \cdots  \geq \xi_n$, and let $\mathcal{B}$
its quotient matrix with eigenvalues $\eta_1\geq \eta_2 \geq \cdots  \geq \eta_r$ and $n>r$. Then $\xi_i \geq \eta_i \geq \xi_{n-r+i}$
for $i=1, 2, \ldots, r$.
\end{lemma}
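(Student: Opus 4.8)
The statement is the classical interlacing theorem for quotient (average) matrices of a partitioned real symmetric matrix; my plan is to realize $\mathcal{B}$, up to similarity, as the compression of $M$ to the $r$-dimensional subspace spanned by the normalized block indicators, and then invoke Cauchy's interlacing theorem. Throughout I write $r=t$ for the number of diagonal blocks and $V_1,\dots,V_t$ for the corresponding index sets, with $|V_j|=n_j$ and $n_1+\cdots+n_t=n$.

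First I would introduce the \emph{characteristic matrix} $S\in\mathbb{R}^{n\times t}$ of the partition, whose $j$-th column is $n_j^{-1/2}$ times the $0$--$1$ indicator vector of $V_j$; since the $V_j$ are pairwise disjoint, the columns of $S$ are orthonormal, i.e.\ $S^{T}S=I_t$. Next I would compute $B:=S^{T}MS$: entrywise, $(S^{T}MS)_{ij}=n_i^{-1/2}n_j^{-1/2}\sum_{u\in V_i,\,v\in V_j}M_{uv}=\sqrt{n_i/n_j}\,b_{ij}$, so that $B=D^{1/2}\mathcal{B}D^{-1/2}$ with $D=\diag(n_1,\dots,n_t)$. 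Hence $B$ is symmetric and similar to $\mathcal{B}$; in particular the spectrum of $\mathcal{B}$ is real (which the statement tacitly assumes) and coincides with that of $B$, namely $\eta_1\geq\cdots\geq\eta_t$.

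It then remains to prove $\xi_i\geq\eta_i\geq\xi_{n-t+i}$ for $i=1,\dots,t$. I would complete $S$ to an $n\times n$ orthogonal matrix $P=[\,S\mid S'\,]$, possible because $t<n$; then $P^{T}MP$ is orthogonally similar to $M$, hence has the same eigenvalues $\xi_1\geq\cdots\geq\xi_n$, and its leading $t\times t$ principal submatrix equals $B$. Cauchy's interlacing theorem for principal submatrices then gives exactly $\xi_i\geq\eta_i\geq\xi_{n-t+i}$. Alternatively, I could argue directly from the Courant--Fischer min--max formulas, using that $\|Sx\|=\|x\|$ for all $x\in\mathbb{R}^{t}$, so that $x^{T}Bx/\|x\|^{2}=(Sx)^{T}M(Sx)/\|Sx\|^{2}$, and that $\{SU:\dim U=i\}$ is a subfamily of the $i$-dimensional subspaces of $\mathbb{R}^{n}$; the $\max$--$\min$ characterization of $\eta_i$ then yields $\eta_i\leq\xi_i$, and the $\min$--$\max$ characterization over $(t-i+1)$-dimensional subspaces yields $\eta_i\geq\xi_{n-t+i}$. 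Since this is a standard fact there is no deep obstacle; the only point demanding care is the index bookkeeping in the Courant--Fischer route (matching $n-t+i=n-(t-i+1)+1$), which the Cauchy-interlacing route avoids by treating that theorem as a black box.
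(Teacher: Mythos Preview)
Your proof is correct and is essentially the classical argument from Haemers' original paper: realize the quotient matrix, up to the similarity $D^{1/2}\mathcal{B}D^{-1/2}$, as the compression $S^{T}MS$ of $M$ to the span of the normalized block indicators, and then invoke Cauchy interlacing. The paper itself does not give a proof of this lemma; it is simply quoted from \cite{H} as a known result, so there is nothing further to compare.
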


\begin{corollary}\label{cor5,2} %------
Let $M$ be a symmetric partitioned matrix of order $n$, and let $\mathcal{B}$ be its quotient matrix of order $k$. Then
$$S_{k}(M)\geq S_k(\mathcal{B}).$$
\end{corollary}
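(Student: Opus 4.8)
The statement to prove is Corollary \ref{cor5,2}: if $M$ is a symmetric partitioned matrix of order $n$ whose quotient matrix $\mathcal{B}$ has order $k$ (meaning there are exactly $k$ blocks in the partition), then $S_k(M) \geq S_k(\mathcal{B})$.

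The plan is to invoke Lemma \ref{le5,1} directly with $r = k$. That lemma states that for a symmetric partitioned matrix $M$ of order $n$ with eigenvalues $\xi_1 \geq \cdots \geq \xi_n$ and quotient matrix $\mathcal{B}$ with eigenvalues $\eta_1 \geq \cdots \geq \eta_r$ (where $n > r$), one has $\xi_i \geq \eta_i$ for each $i = 1, 2, \ldots, r$. In our situation $r = k$, so we get $\xi_i \geq \eta_i$ for $i = 1, \ldots, k$.

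First I would note that $\xi_i = \lambda_i(M)$ and $\eta_i = \lambda_i(\mathcal{B})$ in the notation of the paper. Then I would simply sum the $k$ inequalities $\lambda_i(M) \geq \lambda_i(\mathcal{B})$ over $i = 1, \ldots, k$ to obtain
$$S_k(M) = \sum_{i=1}^{k} \lambda_i(M) \geq \sum_{i=1}^{k} \lambda_i(\mathcal{B}) = S_k(\mathcal{B}),$$
which is the desired inequality. (If one wants to be careful about the edge case $n = k$, then $M = \mathcal{B}$ up to the averaging being trivial, and the inequality holds with equality; but the interesting content is the case $n > k$ covered by Lemma \ref{le5,1}.)

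There is essentially no obstacle here — this corollary is an immediate consequence of the interlacing-type Lemma \ref{le5,1} combined with monotone summation. The only thing to be mindful of is matching the hypothesis "quotient matrix of order $k$" with the role of $r$ in Lemma \ref{le5,1}, i.e. recognizing that the number of blocks equals the order of $\mathcal{B}$ equals $k$, so that the eigenvalue comparison $\xi_i \geq \eta_i$ runs over exactly the range $1 \leq i \leq k$ that we need for $S_k$.

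\begin{proof}
Let $\lambda_1(M)\geq \lambda_2(M)\geq \cdots \geq \lambda_n(M)$ be the eigenvalues of $M$ and let $\lambda_1(\mathcal{B})\geq \lambda_2(\mathcal{B})\geq \cdots \geq \lambda_k(\mathcal{B})$ be the eigenvalues of $\mathcal{B}$, where $\mathcal{B}$ has order $k$. Applying Lemma \ref{le5,1} with $r=k$, we obtain $\lambda_i(M)\geq \lambda_i(\mathcal{B})$ for $i=1,2,\ldots,k$. Summing these inequalities over $i=1,\ldots,k$ yields
$$S_k(M)=\sum\limits_{i=1}^{k}\lambda_i(M)\geq \sum\limits_{i=1}^{k}\lambda_i(\mathcal{B})=S_k(\mathcal{B}).$$
This completes the proof. $\Box$
\end{proof}
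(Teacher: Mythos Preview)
Your proof is correct and matches the paper's intended argument: the paper states this corollary immediately after Lemma \ref{le5,1} without proof, and your derivation—applying the lemma with $r=k$ and summing the inequalities $\lambda_i(M)\geq \lambda_i(\mathcal{B})$—is exactly the implicit reasoning.
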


Let $\mathcal{B}$ be the quotient matrix of $A_{\alpha}(G)$ corresponding to the partition for the color classes of $G$. Then the following corollary is immediate.

\begin{corollary}\label{cor5,3} %------
Let $G$ be a connected graph with $n$ vertices, $m$ edges, chromatic number $\chi$ and independence number $\theta$. If $0\leq \alpha < 1$, then
$$S_{\chi}(A_{\alpha}(G))\geq \frac{2\alpha m}{\theta}.$$
\end{corollary}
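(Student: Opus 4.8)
The plan is to exploit a proper coloring of $G$ to build a partition of $V(G)$ into $\chi$ independent sets and then feed the associated quotient matrix of $A_{\alpha}(G)$ into Corollary~\ref{cor5,2}. First I would fix a proper coloring of $G$ using exactly $\chi$ colors, which yields a partition $V(G)=V_1\cup\cdots\cup V_{\chi}$ into nonempty color classes, each of which is an independent set; in particular $1\leq |V_i|\leq \theta$ for every $i$. Let $\mathcal{B}$ be the quotient matrix of $A_{\alpha}(G)$ with respect to this partition, so $\mathcal{B}$ has order $\chi$.

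The key observation is the form of the diagonal blocks. Since each $V_i$ is independent, the block of $A(G)$ indexed by $V_i$ is the zero matrix, so the block of $A_{\alpha}(G)$ indexed by $V_i$ equals $\alpha\,\operatorname{diag}(d_v : v\in V_i)$. Hence the $i$-th diagonal entry of $\mathcal{B}$, being the average row sum of that block, is
$$b_{ii}=\frac{\alpha}{|V_i|}\sum_{v\in V_i} d_v.$$
Because $\mathcal{B}$ has order $\chi$, the sum of its $\chi$ largest eigenvalues is simply $\operatorname{tr}(\mathcal{B})=\sum_{i=1}^{\chi} b_{ii}$, and Corollary~\ref{cor5,2} gives $S_{\chi}(A_{\alpha}(G))\geq S_{\chi}(\mathcal{B})=\operatorname{tr}(\mathcal{B})$.

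It then remains to bound $\operatorname{tr}(\mathcal{B})$ from below. Using $|V_i|\leq \theta$ for each $i$ together with $\sum_{i=1}^{\chi}\sum_{v\in V_i} d_v=\sum_{v\in V(G)} d_v=2m$, I would estimate
$$\operatorname{tr}(\mathcal{B})=\alpha\sum_{i=1}^{\chi}\frac{1}{|V_i|}\sum_{v\in V_i} d_v\geq \frac{\alpha}{\theta}\sum_{i=1}^{\chi}\sum_{v\in V_i} d_v=\frac{2\alpha m}{\theta},$$
which combined with the previous inequality yields $S_{\chi}(A_{\alpha}(G))\geq \frac{2\alpha m}{\theta}$. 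There is no serious obstacle here: the only points requiring care are that an optimal coloring makes every color class nonempty (so $\mathcal{B}$ really has order $\chi$ and Corollary~\ref{cor5,2} applies) and that the diagonal blocks are diagonal matrices, which is exactly where independence of the color classes is used.
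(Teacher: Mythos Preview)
Your proposal is correct and follows exactly the approach the paper indicates: form the quotient matrix $\mathcal{B}$ of $A_{\alpha}(G)$ with respect to the color-class partition and apply Corollary~\ref{cor5,2}. The paper merely states that the corollary is ``immediate'' from this construction; you have supplied precisely the omitted computation of $\operatorname{tr}(\mathcal{B})$ and the estimate $|V_i|\leq\theta$ that turns $S_\chi(\mathcal{B})=\operatorname{tr}(\mathcal{B})$ into the desired bound.
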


Let $U\subseteq V(G)$, $W\subseteq V(G)$ and $\partial(U, W)$ be the set of edges which connect vertices in $U$ with vertices in $W$.

\begin{theorem}\label{th5,4} %------
 Let $0\leq \alpha < 1$ and $G$ be a connected graph with $n$ vertices and $m$ edges. For any given vertices subset $U=\{u_1,\ldots,u_{k-1}\}$ with $1\leq k \leq n$,
$$S_{k}(A_{\alpha}(G)) \geq \left(\alpha-\frac{1}{n-k+1}\right)\sum\limits_{u\in U}d_u+\frac{2m-(1-\alpha)|\partial(U, V(G)\backslash U)|}{n-k+1}.$$
\end{theorem}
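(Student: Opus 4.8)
The plan is to apply the Ky Fan maximum principle (Lemma~\ref{le2,7}) to the Hermitian matrix $A_{\alpha}(G)$ with an orthonormal $k$-tuple built directly out of the prescribed set $U$. Write $W=V(G)\setminus U$, so that $|W|=n-k+1$ because $|U|=k-1$. For each $u_j\in U$ let $X_j=e_{u_j}$ be the standard unit coordinate vector supported on $u_j$, and let $X_k=\frac{1}{\sqrt{n-k+1}}\,\mathbf{1}_W$ be the normalized indicator vector of $W$. These $k$ vectors are pairwise orthogonal (the $e_{u_j}$ trivially, and $X_k$ because it is supported on $W$, which is disjoint from $U$) and each has unit norm, so Lemma~\ref{le2,7} gives
$$S_k(A_{\alpha}(G))\ \ge\ \sum_{i=1}^{k}\langle A_{\alpha}(G)X_i,X_i\rangle.$$

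First I would evaluate the $k-1$ ``diagonal'' terms: $\langle A_{\alpha}(G)e_{u_j},e_{u_j}\rangle=(A_{\alpha}(G))_{u_ju_j}=\alpha d_{u_j}$, so these contribute $\alpha\sum_{u\in U}d_u$. Next I would compute the remaining term $\langle A_{\alpha}(G)X_k,X_k\rangle=\frac{1}{n-k+1}\mathbf{1}_W^{\top}A_{\alpha}(G)\mathbf{1}_W$. Expanding the quadratic form and splitting into diagonal and off-diagonal entries, $\mathbf{1}_W^{\top}A_{\alpha}(G)\mathbf{1}_W=\alpha\sum_{i\in W}d_i+2(1-\alpha)e(W)$, where $e(W)$ is the number of edges inside $W$. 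Using the double-counting identity $\sum_{i\in W}d_i=2e(W)+|\partial(U,W)|$ to eliminate $2e(W)$, this simplifies to $\sum_{i\in W}d_i-(1-\alpha)|\partial(U,W)|$; and since $\sum_{i\in W}d_i=2m-\sum_{u\in U}d_u$, we obtain
$$\langle A_{\alpha}(G)X_k,X_k\rangle=\frac{2m-\sum_{u\in U}d_u-(1-\alpha)|\partial(U,W)|}{n-k+1}.$$

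Adding the two contributions yields
$$S_k(A_{\alpha}(G))\ \ge\ \alpha\sum_{u\in U}d_u+\frac{2m-\sum_{u\in U}d_u-(1-\alpha)|\partial(U,W)|}{n-k+1}=\Big(\alpha-\tfrac{1}{n-k+1}\Big)\sum_{u\in U}d_u+\frac{2m-(1-\alpha)|\partial(U,W)|}{n-k+1},$$
which is exactly the asserted inequality. There is essentially no obstacle here beyond careful bookkeeping: the two points needing attention are the orthonormality of the chosen tuple (which forces the all-ones vector to be normalized over precisely the $(n-k+1)$-element complement $W$) and the edge-counting identities $\sum_{i\in W}d_i=2e(W)+|\partial(U,W)|$ and $2m=\sum_{v}d_v$. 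The extreme cases $k=1$ (with $U=\emptyset$, recovering $S_1\ge 2m/n$) and $k=n$ (with $|W|=1$) are handled automatically, and connectedness of $G$ is not actually used.
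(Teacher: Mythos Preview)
Your proof is correct. Both you and the paper use the same partition of $V(G)$ into the singletons $\{u_1\},\ldots,\{u_{k-1}\}$ together with $W=V(G)\setminus U$, and both end up computing the identical trace
\[
\alpha\sum_{u\in U}d_u+\frac{2m-\sum_{u\in U}d_u-(1-\alpha)|\partial(U,W)|}{n-k+1}.
\]
The only difference is the tool invoked: the paper forms the $k\times k$ quotient matrix $\mathcal{B}(G)$ of $A_\alpha(G)$ with respect to this partition, applies the interlacing inequality of Lemma~\ref{le5,1} (via Corollary~\ref{cor5,2}) to get $S_k(A_\alpha(G))\ge S_k(\mathcal{B}(G))=\operatorname{tr}(\mathcal{B}(G))$, and then computes that trace; you instead go straight to Ky Fan (Lemma~\ref{le2,7}) with the normalized indicator vectors of the parts and sum the Rayleigh quotients. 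Since the normalized indicators are exactly the orthonormal vectors underlying the quotient-matrix interlacing, the two arguments are formally equivalent; your route is a bit more direct in that it bypasses the quotient-matrix machinery, while the paper's phrasing makes the connection to Corollary~\ref{cor5,2} explicit. The paper also treats $k=1$ separately via Rayleigh's principle, whereas your framework absorbs it uniformly, as you note.
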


\begin{proof} If $2\leq k \leq n$, then the quotient matrix of $A_{\alpha}(G)$ corresponding to the partition $V(G)=U\cup (V(G)\backslash U)$ of $G$ is
$$
	\mathcal{B}(G)=\left[
	\begin{array}{ccc|c}
	 & & &b_{1,k}\\
	 & A_{\alpha}(U) & & \vdots\\
	& & & b_{k-1,k}\\ \hline
	b_{k,1} &\cdots &b_{k,k-1} &b_{k,k}\\
	\end{array}
	\right],
	$$
where $A_{\alpha}(U)$ is the principal submatrix of $A_{\alpha}(G)$. By Lemma \ref{le5,1}, we have
\begin{eqnarray*}
S_{k}(A_{\alpha}(G)) & \geq & S_{k}(\mathcal{B}(G))\\
& = & tr(A_{\alpha}(U))+b_{k,k}\\
& = & \alpha\sum\limits_{u\in U}d(u)+\frac{2m-\sum\limits_{u\in U}d_u-(1-\alpha)|\partial(U, V(G)\backslash U)|}{n-k+1}\\
& = & \left(\alpha-\frac{1}{n-k+1}\right)\sum\limits_{u\in U}d_u+\frac{2m-(1-\alpha)|\partial(U, V(G)\backslash U)|}{n-k+1}.
\end{eqnarray*}

If $k=1$, then $U$ is an empty set. Thus $\sum\limits_{u\in U}d_u=0$ and $|\partial(U, V(G)\backslash U)|=0$. Taking a $n$-vector $X=(1, \ldots, 1)$, by Rayleigh's principle, we have
$$S_1(A_{\alpha}(G))=\lambda_1(A_{\alpha}(G)) \geq \frac{2m}{n}.$$
Therefore, the above inequality still holds for $k=1$.
This completes the proof. $\Box$
\end{proof}

If $U$ is a subset of a maximum independent set of $G$, by Theorem \ref{th5,4}, we have

\begin{corollary}\label{cor5,4} %------
Let $G$ be a connected graph with $n$ vertices, $m$ edges and independence number $\theta$. If $0\leq \alpha < 1$, then
$$S_{k}(A_{\alpha}(G)) \geq \alpha(k-1)\delta+\frac{2m-(2-\alpha)(k-1)\delta}{n-k+1}$$
for $1\leq k \leq \theta+1$.
\end{corollary}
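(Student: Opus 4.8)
The plan is to specialize Theorem~\ref{th5,4} to an independent set. Since $1\le k\le\theta+1$ we have $k-1\le\theta$, so I would choose $U=\{u_1,\dots,u_{k-1}\}$ to be a $(k-1)$-element subset of a maximum independent set of $G$. When $k=1$ this is the empty set and the assertion is exactly the Rayleigh bound $\lambda_1(A_{\alpha}(G))\ge 2m/n$ already obtained inside the proof of Theorem~\ref{th5,4}, so I may assume $k\ge 2$.

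The one structural fact I would use is that $U$, lying inside an independent set, spans no edge of $G$; hence every edge of $G$ meeting $U$ has its other endpoint in $V(G)\backslash U$, so $|\partial(U,V(G)\backslash U)|=\sum_{u\in U}d_u$. Substituting this identity into the estimate of Theorem~\ref{th5,4} and grouping the terms carrying $\sum_{u\in U}d_u$ gives
$$S_{k}(A_{\alpha}(G))\ \ge\ \Bigl(\alpha-\tfrac{1}{n-k+1}\Bigr)\sum_{u\in U}d_u+\frac{2m-(1-\alpha)\sum_{u\in U}d_u}{n-k+1}\ =\ \alpha\sum_{u\in U}d_u+\frac{2m-(2-\alpha)\sum_{u\in U}d_u}{n-k+1}.$$

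It then remains to pass from $\sum_{u\in U}d_u$ to $(k-1)\delta$. The right-hand side above is affine in $D:=\sum_{u\in U}d_u$ with slope $\alpha-\tfrac{2-\alpha}{n-k+1}$; since every vertex has degree at least $\delta$ we have $D\ge(k-1)\delta$, and once this slope is checked to be nonnegative, replacing $D$ by $(k-1)\delta$ only lowers the right-hand side, yielding
$$S_{k}(A_{\alpha}(G))\ \ge\ \alpha(k-1)\delta+\frac{2m-(2-\alpha)(k-1)\delta}{n-k+1},$$
which is the claimed bound. Everything before this last step is a verbatim substitution into Theorem~\ref{th5,4} together with the elementary edge count for independent $U$; the step I expect to need the most care is precisely this final monotonicity, since the direction of the replacement $D\mapsto(k-1)\delta$ hinges on the sign of the coefficient $\alpha-\tfrac{2-\alpha}{n-k+1}$, and I would want to make that sign condition explicit (or, failing that, choose $U$ so as to keep $\sum_{u\in U}d_u$ as small as possible within a maximum independent set).
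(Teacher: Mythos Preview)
Your approach is exactly the paper's: it too simply says ``if $U$ is a subset of a maximum independent set of $G$, by Theorem~\ref{th5,4} we have\ldots'' and writes down the claimed bound. Your substitution $|\partial(U,V(G)\backslash U)|=\sum_{u\in U}d_u$ for independent $U$ and the resulting simplification are correct.

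The step you flagged, however, is a genuine obstruction rather than a technicality requiring only more care. The coefficient $\alpha-\dfrac{2-\alpha}{n-k+1}$ is negative whenever $\alpha(n-k+2)<2$; in that regime, replacing $D=\sum_{u\in U}d_u$ by the lower bound $(k-1)\delta$ \emph{raises} the right-hand side, and the deduction collapses. Your fallback of choosing $U$ to minimize $D$ within a maximum independent set does not repair this either, since nothing guarantees that a maximum independent set contains $k-1$ vertices of degree $\delta$. In fact the corollary is false as stated: take $G$ to be $K_4$ with one pendant vertex attached (so $n=5$, $m=7$, $\delta=1$, $\theta=2$), and set $\alpha=0$, $k=3=\theta+1$. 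The adjacency eigenvalues are the roots of $\lambda^3-2\lambda^2-4\lambda+2=0$ together with $-1,-1$, approximately $3.086,\ 0.428,\ -1,\ -1,\ -1.514$, so $S_3(A(G))\approx 2.514$, while the claimed lower bound is $\dfrac{2\cdot 7-2\cdot 2\cdot 1}{3}=\dfrac{10}{3}\approx 3.333$. Thus the argument --- both yours and the paper's one-line derivation --- only yields the stated inequality under an additional hypothesis such as $\alpha(n-k+2)\ge 2$; you were right to single out that monotonicity step as the crux.
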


The next theorem is concerned with Problem \ref{pro1,1}. For $k=2$, we will prove that path is the minimum $S_k(A_{\alpha}(G))$ among all connected graphs for $\frac{1}{2}\leq \alpha< 1$. Let the sequence $(d_1, d_2, \ldots, d_n)$ be the set of a graph with the same degree sequence.

\begin{theorem}\label{th5,5} %------
Let $G$ be a connected graph with $n\geq 12$ vertices. If $ \frac{1}{2}\leq \alpha < 1$, then
$$S_{2}(A_{\alpha}(G))\geq S_{2}(A_{\alpha}(P_n))$$
with equality if and only if $G=P_n$.
\end{theorem}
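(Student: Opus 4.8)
The plan is to reduce to trees and then to compare a tree with the path according to how path-like it is.

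\emph{Reduction to trees.} If $G$ is connected but not a tree, fix a spanning tree $T\subsetneq G$ and delete the edges of $E(G)\setminus E(T)$ one by one. By Lemma~\ref{le2,8} (valid since $\alpha\ge\frac12$) each deletion does not increase any eigenvalue, so $S_2(A_\alpha(G))\ge S_2(A_\alpha(T))$; and since reinserting the first deleted edge $uv$ strictly increases $\lambda_1$ of a connected graph — the Perron vector $x$ of the smaller graph is strictly positive and, for $\alpha\ge\frac12$, $x^{\top}\bigl(\alpha E_{uu}+\alpha E_{vv}+(1-\alpha)E_{uv}+(1-\alpha)E_{vu}\bigr)x=\alpha x_u^2+\alpha x_v^2+2(1-\alpha)x_ux_v>0$ — we in fact get $S_2(A_\alpha(G))>S_2(A_\alpha(T))$. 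Hence it suffices to prove $S_2(A_\alpha(T))>S_2(A_\alpha(P_n))$ for every tree $T\ne P_n$ with $n\ge 12$ vertices; equality for $G=P_n$ is trivial.

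\emph{Trees far from a path.} Recall $S_2(A_\alpha(P_n))<4$ by Corollary~\ref{cor4,2} and $\lambda_2(A_\alpha(T))\ge 0$ by positive semidefiniteness, so it is enough to exhibit a spanning subgraph of $T$ with $S_2(A_\alpha(\cdot))\ge 4$. Using Lemma~\ref{le2,8} together with $\lambda_1(A_{1/2}(K_{1,s}))=\tfrac{s+1}{2}$, $\lambda_1(A_\alpha(K_{1,3}))=2\alpha+\sqrt{4\alpha^2-6\alpha+3}$, and monotonicity in $\alpha$ (Lemma~\ref{le2,13}), this holds whenever $\Delta(T)\ge 7$ (take $K_{1,\Delta}$), whenever $T$ has two vertices of degree $\ge 3$ at distance $\ge 3$ (take two vertex-disjoint $K_{1,3}$'s), or whenever $\Delta(T)\le 6$ but some branch is long enough (take a disjoint $K_{1,s}\sqcup P_\ell$). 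A counting argument — together with a small finite check for $n$ close to $12$ — shows that every tree not covered by these cases is a long path carrying one, or two adjacent, bounded attachments, the extremal such tree being $D_n$.

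\emph{Trees close to a path.} This is the heart of the matter, and the main obstacle: both $4-S_2(A_\alpha(P_n))$ and $4-S_2(A_\alpha(T))$ are of order $n^{-2}$, so trace/moment estimates or $2\times 2$ quotient-matrix estimates (which lose a factor of order $n^{-1}$) are far too weak. Let $P^{*}\cong P_{n-c}$ be the diametral path of $T$; it is an induced path, so the principal submatrix of $A_\alpha(T)$ on $V(P^{*})$ equals $A_\alpha(P^{*})+\alpha\,\mathrm{diag}(g)$, where $g_v\ge 0$ is the number of edges leaving $v$ into the attachment. Feeding the first two eigenvectors $X_1,X_2$ of $A_\alpha(P^{*})$ (extended by $0$ to $V(T)$) into Lemma~\ref{le2,7} gives
$$S_2(A_\alpha(T))\ \ge\ S_2(A_\alpha(P_{n-c}))+\alpha\sum_{v}g_v\bigl((x_v^{(1)})^2+(x_v^{(2)})^2\bigr).$$
When some attachment point is not too close to an end of $P^{*}$ the surplus already outweighs $S_2(A_\alpha(P_n))-S_2(A_\alpha(P_{n-c}))$, and only finitely many shapes remain, with all attachments near an end (the critical one being $D_n$). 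For each of these I would compute the $A_\alpha$-characteristic polynomials of $T$ and of $P_n$ by iterated pendant-vertex deletion, express them through $A_\alpha$-characteristic polynomials of short paths, and compare the two largest roots, using the explicit path estimates of Theorem~\ref{th4,4}(ii) and Corollary~\ref{cor5,1}(ii) (sharpened as necessary). The hypothesis $n\ge 12$ is essential here, since the inequality genuinely fails for small $n$ (for instance $S_2(A_{1/2}(K_{1,4}))=3<S_2(A_{1/2}(P_5))$). Combining the three steps yields $S_2(A_\alpha(G))\ge S_2(A_\alpha(P_n))$ with equality if and only if $G=P_n$.
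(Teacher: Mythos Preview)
Your argument is not a proof as it stands. The step you call ``the heart of the matter'' is only a plan: phrases such as ``a counting argument \ldots\ shows'', ``a small finite check for $n$ close to $12$'', and ``for each of these I would compute the $A_\alpha$-characteristic polynomials'' all defer the actual work. In particular you never establish that the residual family of ``trees close to a path'' is finite up to the length of the backbone, nor do you carry out any comparison of characteristic polynomials; and the Rayleigh-quotient surplus $\alpha\sum_v g_v\bigl((x_v^{(1)})^2+(x_v^{(2)})^2\bigr)$ is of order $n^{-1}$ when the attachment sits near an endpoint of the diametral path, which does not obviously beat $S_2(A_\alpha(P_n))-S_2(A_\alpha(P_{n-c}))$. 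So the hardest case is precisely the one you leave open.

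The paper sidesteps this difficulty with two reductions you did not use. First, Lemma~\ref{le2,13} gives $S_2(A_\alpha(T))\ge S_2(A_{1/2}(T))$, so one may fix $\alpha=\tfrac12$. Second, Lemma~\ref{le2,8} lets one pass from any non-path tree $T$ on $n\ge 12$ vertices to a non-path subtree $T_{12}$ on exactly $12$ vertices (take a connected $12$-vertex subtree containing a branch vertex), obtaining $S_2(A_{1/2}(T))\ge S_2(A_{1/2}(T_{12}))$. After these two moves the problem is \emph{finite}: one only needs $S_2(A_{1/2}(T_{12}))\ge 4$ for every non-path tree on $12$ vertices, which the paper obtains from the Oliveira--Lima bound $S_2(Q(G))\ge d_1+d_2+1$ together with a computer check of the few remaining degree sequences. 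Since $S_2(A_\alpha(P_n))<4$ by Corollary~\ref{cor4,2}, the strict inequality follows, and neither $\alpha$ nor $n$ appears in the hard part of the argument. Your route could perhaps be made to work, but it is substantially more laborious and, as written, incomplete.
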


\begin{proof} By Corollary \ref{cor4,2}, we have $S_2(A_{\alpha}(P_n))< 4$. Let $T_n$ be a spanning tree of a connected graph $G$ with $n$ vertices. By Lemmas \ref{le2,8} and \ref{le2,13}, we have
$$S_2(A_{\alpha}(G))\geq S_2(A_{\alpha}(T_n)) \geq S_2(A_{1/2}(T_n))\geq S_2(A_{1/2}(T_{12}))$$
for $\frac{1}{2}\leq \alpha <1$ and $n\geq 12$.

In the following, we only need to show $S_2(A_{1/2}(T_{12}))\geq S_2(A_{\alpha}(P_n))$ for $\frac{1}{2}\leq \alpha <1$ and $n\geq 12$. For $\alpha=\frac{1}{2}$, we have $A_{1/2}(G)=\frac{1}{2}Q(G)$. From Theorem 3.1 in \cite{OL}, we know that $S_2(Q(G))\geq S_2(D(G))+1$ with equality if and only if $G$ is the star $K_{1,\,n-1}$ or the complete graph $K_3$. Let $\Delta_2(G)$ be the second largest degree of a graph $G$.

If $\Delta(T_{12})\geq 4$ and $\Delta_2(T_{12})\geq 3$, then we have
$$S_2(A_{1/2}(T_{12}))\geq \frac{1}{2}(S_2(D(G))+1)=\frac{1}{2}(4+3+1)= 4> S_2(A_{\alpha}(P_n))$$
for $\frac{1}{2}\leq \alpha <1$ and $n\geq 12$.

If $\Delta(T_{12})\geq 5$ and $\Delta_2(T_{12})\geq 2$, then we have
$$S_2(A_{1/2}(T_{12}))\geq \frac{1}{2}(S_2(D(G))+1)=\frac{1}{2}(5+2+1)= 4> S_2(A_{\alpha}(P_n))$$
for $\frac{1}{2}\leq \alpha <1$ and $n\geq 12$.

If $\Delta(T_{12})= 4$ and $\Delta_2(T_{12})=2$, then $T_{12}$ is one of the trees $(4, 2, 2, 2, 2, 2, 2, 2, 1, 1, 1, 1)$.  By computation with computer, we have
$$S_2(A_{1/2}(T_{12}))\geq S_2(A_{1/2}(T'))\geq \frac{1}{2}\times 8.57037=4.285185> S_2(A_{\alpha}(P_n))$$
for $\frac{1}{2}\leq \alpha <1$ and $n\geq 12$, where $T'$, shown in Fig. 4.1, is a tree with minimum sum of the two largest $A_{1/2}$-eigenvalues in the set of trees $(4, 2, 2, 2, 2, 2, 2, 2, 1, 1, 1, 1)$.

If $\Delta(T_{12})=\Delta_2(T_{12})=3$, then we assume that $x$, $y$ and $z$ be the number of the vertices of the degree three, the degree two and the degree one, respectively. Thus, we have
$$x+y+z=12, \qquad 3x+2y+z=22.$$
Solve the above equations, we get
$x=2, y=6, z=4$; $x=3, y=4, z=5$; $x=4, y=2, z=6$; $x=5, y=0, z=7$. Further, we know that $T_{12}$ is one of the trees $(3, 3, 2, 2, 2, 2, 2, 2, 1, 1, 1, 1)$, $(3, 3, 3, 2, 2, 2, 2, 1, 1, 1, 1, 1)$, $(3, 3, 3, 3, 2, 2, 1, 1, 1, 1, 1, 1)$ and $(3, 3, 3, 3, 3, 1, 1, 1, 1, 1, 1, 1)$. By computation with computer, we have
$$S_2(A_{1/2}(T_{12}))\geq S_2(A_{1/2}(T''))\geq \frac{1}{2}\times 8.31903=4.159515> S_2(A_{\alpha}(P_n))$$
for $\frac{1}{2}\leq \alpha <1$ and $n\geq 12$, where $T''$, shown in Fig. 4.1, is a tree with minimum sum of the two largest $A_{1/2}$-eigenvalues in the set of trees $(3, 3, 2, 2, 1, 1, 1, 1, 1, 1, 1, 1)$, $(3, 3, 3, 2, 2, 2, 2, 1, 1, 1, 1, 1)$, $(3, 3, 3, 3, 2, 2, 1, 1, 1, 1, 1, 1)$ and $(3, 3, 3, 3, 3, 1, 1, 1, 1, 1, 1, 1)$.

If $\Delta(T_{12})=3$ and $\Delta_2(T_{12})=2$, then $T_{12}$ is one of the trees $(3, 2, 2, 2, 2, 2, 2, 2, 2, 1, 1, 1)$. By computation with computer, we have
$$S_2(A_{1/2}(T_{12}))\geq S_2(A_{1/2}(T'''))\geq \frac{1}{2}\times 8.02294=4.01147> S_2(A_{\alpha}(P_n))$$
for $\frac{1}{2}\leq \alpha <1$ and $n\geq 12$, where $T'''$, shown in Fig. 4.1, is a tree with minimum sum of the two largest $A_{1/2}$-eigenvalues in the set of trees $(3, 2, 2, 2, 2, 2, 2, 2, 2, 1, 1, 1)$.

If $\Delta_2(T_{12})=1$, then $T_{12}$ is a star $K_{1,\, 11}$. Thus,
$$S_2(A_{1/2}(K_{1,\, 11}))\geq \frac{1}{2}\times 13=6.5>S_2(A_{\alpha}(P_n))$$
for $\alpha \in [\frac{1}{2}, 1)$ and $n\geq 12$.

Combining the above argument, we have $S_2(A_{1/2}(T_{12}))\geq S_2(A_{\alpha}(P_n))$ for $\alpha \in [\frac{1}{2}, 1)$ and $n\geq 12$. Further, we get $S_{2}(A_{\alpha}(G))\geq S_{2}(A_{\alpha}(P_n))$ for $\frac{1}{2}\leq \alpha <1$ and $n\geq 12$, and equality holds if and only if $G=P_n$, completing the proof. $\Box$

\end{proof}

\begin{picture}(300,65)

    \put(70,40){\circle*{3}}
       \put(70,40){\line(5,4){23}}
       \put(70,40){\line(5,-4){23}}
       \put(70,40){\line(-5,4){23}}
       \put(70,40){\line(-5,-4){23}}
       \put(92,58){\circle*{3}}
       \put(112,58){\circle*{3}}
       \put(92,58){\line(1,0){20}}
        \put(132,58){\circle*{3}}
       \put(112,58){\line(1,0){20}}
       \put(92,22){\circle*{3}}
       \put(112,22){\circle*{3}}
       \put(92,22){\line(1,0){20}}
        \put(132,22){\circle*{3}}
       \put(112,22){\line(1,0){20}}
       \put(48,58){\circle*{3}}
       \put(28,58){\circle*{3}}
       \put(28,58){\line(1,0){20}}
        \put(48,22){\circle*{3}}
        \put(28,22){\circle*{3}}
       \put(28,22){\line(1,0){20}}
        \put(8,22){\circle*{3}}
       \put(8,22){\line(1,0){20}}
        \put(68,-5){\footnotesize $T'$}

        \put(185,40){\circle*{3}}
       \put(185,40){\line(-5,4){23}}
       \put(185,40){\line(-5,-4){23}}
       \put(185,40){\line(1,0){20}}
        \put(205,40){\circle*{3}}
       \put(205,40){\line(5,4){23}}
       \put(205,40){\line(5,-4){23}}
       \put(227,58){\circle*{3}}
       \put(247,58){\circle*{3}}
       \put(227,58){\line(1,0){20}}
        \put(267,58){\circle*{3}}
        \put(247,58){\line(1,0){20}}
        \put(287,58){\circle*{3}}
        \put(267,58){\line(1,0){20}}
       \put(227,22){\circle*{3}}
       \put(247,22){\circle*{3}}
       \put(227,22){\line(1,0){20}}
        \put(267,22){\circle*{3}}
       \put(247,22){\line(1,0){20}}
       \put(287,22){\circle*{3}}
       \put(267,22){\line(1,0){20}}
       \put(163,58){\circle*{3}}
        \put(163,22){\circle*{3}}
         \put(213,-5){\footnotesize $T''$}

       \put(320,40){\circle*{3}}
       \put(320,40){\line(5,4){23}}
       \put(320,40){\line(5,-4){23}}
       \put(340,40){\circle*{3}}
       \put(320,40){\line(1,0){20}}
       \put(360,40){\circle*{3}}
       \put(340,40){\line(1,0){20}}
        \put(380,40){\circle*{3}}
       \put(360,40){\line(1,0){20}}
        \put(342,58){\circle*{3}}
       \put(362,58){\circle*{3}}
       \put(342,58){\line(1,0){20}}
        \put(382,58){\circle*{3}}
        \put(362,58){\line(1,0){20}}
        \put(402,58){\circle*{3}}
        \put(382,58){\line(1,0){20}}
       \put(342,22){\circle*{3}}
       \put(362,22){\circle*{3}}
       \put(342,22){\line(1,0){20}}
        \put(382,22){\circle*{3}}
       \put(362,22){\line(1,0){20}}
       \put(402,22){\circle*{3}}
       \put(382,22){\line(1,0){20}}
       \put(358,-5){\footnotesize $T'''$}

\put(145,-18){Fig. 4.1 \quad Trees $T'$, $T''$, $T'''$. }
\end{picture}

\vskip 8mm

\begin{problem}\label{pro5,1} %------
For $0\leq \alpha <\frac{1}{2}$, which graph(s) minimize the sum of the two largest of $A_{\alpha}$-eigenvalues among all connected graphs with $n$ vertices?
\end{problem}

\section{\large On the sum of the largest $A_{\alpha}$-eigenvalues of graph operations}

\begin{theorem}\label{th6,1} %------
Let $G$ be a graph with $n$ vertices. If $0\leq \alpha \leq 1$, then
$$(1-\alpha)n+(\alpha n-1)k \leq S_{k}(A_{\alpha}(G))+S_{k}(A_{\alpha}(\overline{G}))\leq k[(2-\alpha)n+\alpha(\Delta-\delta-1)-(1-\alpha)(k+1)].$$
\end{theorem}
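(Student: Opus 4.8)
The plan is to exploit the very simple structure of the matrix $A_{\alpha}(G)+A_{\alpha}(\overline{G})$. From $A(G)+A(\overline{G})=J-I$ (with $J$ the all-ones matrix of order $n$) and $D(G)+D(\overline{G})=(n-1)I$ one obtains
$$A_{\alpha}(G)+A_{\alpha}(\overline{G})=\alpha(n-1)I+(1-\alpha)(J-I)=(\alpha n-1)I+(1-\alpha)J.$$
Since $J$ has eigenvalue $n$ (with the all-ones vector) and eigenvalue $0$ with multiplicity $n-1$, the eigenvalues of $A_{\alpha}(G)+A_{\alpha}(\overline{G})$ are $n-1$ once and $\alpha n-1$ with multiplicity $n-1$. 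Because $0\leq\alpha\leq1$ we have $n-1\geq\alpha n-1$, so for $1\leq k\leq n$
$$S_{k}\bigl(A_{\alpha}(G)+A_{\alpha}(\overline{G})\bigr)=(n-1)+(k-1)(\alpha n-1)=(1-\alpha)n+(\alpha n-1)k.$$

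For the lower bound I would apply Fan's inequality (Theorem \ref{th1,1}) with $M=A_{\alpha}(G)$ and $N=A_{\alpha}(\overline{G})$, which gives $S_{k}(A_{\alpha}(G)+A_{\alpha}(\overline{G}))\leq S_{k}(A_{\alpha}(G))+S_{k}(A_{\alpha}(\overline{G}))$; together with the displayed value of the left-hand side this is exactly the claimed lower bound (and equality holds e.g.\ for $G=K_{n}$).

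For the upper bound I would apply Lemma \ref{le2,9} term by term. Writing $d_{1}\geq\cdots\geq d_{n}$ for the degree sequence of $G$, summing $\lambda_{j}(A_{\alpha}(G))\leq\alpha d_{j}+(1-\alpha)(n-j)$ over $j=1,\dots,k$ yields $S_{k}(A_{\alpha}(G))\leq\alpha\sum_{i=1}^{k}d_{i}+(1-\alpha)\bigl(kn-\binom{k+1}{2}\bigr)$. The degree sequence of $\overline{G}$ is $n-1-d_{n}\geq\cdots\geq n-1-d_{1}$, so the sum of its $k$ largest degrees equals $k(n-1)-\sum_{i=n-k+1}^{n}d_{i}$; applying Lemma \ref{le2,9} to $\overline{G}$ gives $S_{k}(A_{\alpha}(\overline{G}))\leq\alpha\bigl(k(n-1)-\sum_{i=n-k+1}^{n}d_{i}\bigr)+(1-\alpha)\bigl(kn-\binom{k+1}{2}\bigr)$. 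Adding the two estimates and using
$$\sum_{i=1}^{k}d_{i}-\sum_{i=n-k+1}^{n}d_{i}=\sum_{i=1}^{k}(d_{i}-d_{n-k+i})\leq k(\Delta-\delta),$$
which holds for every $k$ by pairing the $i$-th largest degree with the $i$-th smallest, one gets $S_{k}(A_{\alpha}(G))+S_{k}(A_{\alpha}(\overline{G}))\leq\alpha k(\Delta-\delta+n-1)+2(1-\alpha)\bigl(kn-\binom{k+1}{2}\bigr)$, and collecting terms turns the right-hand side into $k\bigl[(2-\alpha)n+\alpha(\Delta-\delta-1)-(1-\alpha)(k+1)\bigr]$.

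I do not anticipate a serious obstacle: the whole argument rests on the explicit spectrum of $A_{\alpha}(G)+A_{\alpha}(\overline{G})$ plus Theorem \ref{th1,1} and Lemma \ref{le2,9}. The only points needing care are (i) the inequality $n-1\geq\alpha n-1$, which is what lets us identify the $k$ largest eigenvalues of the sum correctly, and (ii) checking that $\sum_{i=1}^{k}d_{i}-\sum_{i=n-k+1}^{n}d_{i}\leq k(\Delta-\delta)$ even when the index sets $\{1,\dots,k\}$ and $\{n-k+1,\dots,n\}$ overlap (possible once $k>n/2$), which the pairing estimate above handles uniformly. The remainder is the routine algebraic simplification indicated above.
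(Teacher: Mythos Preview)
Your proposal is correct and follows essentially the same route as the paper: the lower bound comes from recognizing $A_{\alpha}(G)+A_{\alpha}(\overline{G})=A_{\alpha}(K_{n})$ and applying Fan's inequality (Theorem~\ref{th1,1}), while the upper bound comes from summing the estimate of Lemma~\ref{le2,9} for $G$ and for $\overline{G}$ and bounding the resulting degree-difference sum by $k(\Delta-\delta)$. The only cosmetic difference is that you compute the spectrum of $A_{\alpha}(K_{n})$ explicitly rather than citing Nikiforov's Proposition~36, and you are a bit more careful about the case $k>n/2$ than the paper is.
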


\begin{proof} From Proposition 36 in \cite{N}, we have $S_{k}(A_{\alpha}(K_n))=(1-\alpha)n+(\alpha n-1)k$. Since $A_{\alpha}(G)+A_{\alpha}(\overline{G})=A_{\alpha}(K_n)$, by Theorem \ref{th1,1}, we have
$$S_{k}(A_{\alpha}(G))+S_{k}(A_{\alpha}(\overline{G}))\geq S_{k}(A_{\alpha}(K_n))=(1-\alpha)n+(\alpha n-1)k.$$
By Lemma \ref{le2,9}, we have
$$S_{k}(A_{\alpha}(G))\leq \alpha(d_1+d_2+\cdots+d_k)+(1-\alpha)\left(kn-\frac{k(k+1)}{2}\right).$$
Thus
\begin{eqnarray*}
S_{k}(A_{\alpha}(G))+S_{k}(A_{\alpha}(\overline{G})) & \leq & \alpha k(n-1)+\alpha\sum\limits_{i=1}^{k}(d_i-d_{n-i+1})+(1-\alpha)(2kn-k(k+1))\\
& \leq & \alpha k(n-1)+\alpha k (\Delta-\delta)+(1-\alpha)(2kn-k(k+1))
\end{eqnarray*}
\begin{eqnarray*}
& = & k[(2-\alpha)n+\alpha(\Delta-\delta-1)-(1-\alpha)(k+1)].
\end{eqnarray*}
This completes the proof. $\Box$
\end{proof}

\begin{theorem}\label{th6,2} %------
Let $G$ be a graph with $n$ vertices and $m\geq 1$ edges. Then
$$S_{k}(A_{\alpha}(\mathcal{L}(G)))\leq 2k(\alpha \Delta-1)+(1-\alpha)S_{k}(Q(G))$$
for $1\leq k \leq s$, where $s=\min\{n,m\}$. If $m>n$, then
$$S_{k}(A_{\alpha}(\mathcal{L}(G)))\leq 2\alpha k(\Delta-1)+2(1-\alpha)(m-k)$$
for $n+1\leq k \leq m$.
\end{theorem}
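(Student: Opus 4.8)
The plan is to write $A_{\alpha}(\mathcal{L}(G)) = \alpha D(\mathcal{L}(G)) + (1-\alpha)A(\mathcal{L}(G))$ and then combine Fan's inequality (Theorem~\ref{th1,1}) with the eigenvalue correspondence of Lemma~\ref{le2,10}. First I would record the two elementary facts about the line graph: it has $m$ vertices, and the vertex of $\mathcal{L}(G)$ corresponding to an edge $uv$ of $G$ has degree $(d_u-1)+(d_v-1) = d_u+d_v-2 \le 2\Delta-2$. Hence every diagonal entry of $D(\mathcal{L}(G))$ is at most $2\Delta-2$, and since $D(\mathcal{L}(G))$ is diagonal, $S_k(D(\mathcal{L}(G)))$ is just the sum of its $k$ largest diagonal entries, so $S_k(D(\mathcal{L}(G))) \le k(2\Delta-2) = 2k(\Delta-1)$ for every admissible $k$.

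For the range $1\le k\le s=\min\{n,m\}$, Lemma~\ref{le2,10} gives $\lambda_i(A(\mathcal{L}(G))) = \lambda_i(Q(G))-2$ for $i=1,\dots,s$, hence $S_k(A(\mathcal{L}(G))) = S_k(Q(G)) - 2k$. Applying Theorem~\ref{th1,1} to the decomposition of $A_{\alpha}(\mathcal{L}(G))$ and substituting the two bounds yields
$$S_k(A_{\alpha}(\mathcal{L}(G))) \le \alpha\cdot 2k(\Delta-1) + (1-\alpha)\bigl(S_k(Q(G))-2k\bigr) = 2k(\alpha\Delta-1) + (1-\alpha)S_k(Q(G)),$$
which is the first assertion; the last equality uses $2\alpha k(\Delta-1)-2k(1-\alpha) = 2\alpha k\Delta - 2k$.

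For the second assertion, suppose $m>n$ and $n+1\le k\le m$. Here $s=n$, so Lemma~\ref{le2,10} gives $\lambda_i(A(\mathcal{L}(G))) = \lambda_i(Q(G))-2$ for $i\le n$ and $\lambda_i(A(\mathcal{L}(G))) = -2$ for $i\ge n+1$. Therefore $\sum_{i=1}^{n}\lambda_i(A(\mathcal{L}(G))) = S_n(Q(G)) - 2n = tr(Q(G)) - 2n = 2m-2n$, and consequently $S_k(A(\mathcal{L}(G))) = (2m-2n) + (k-n)(-2) = 2(m-k)$. Combining with $S_k(D(\mathcal{L}(G)))\le 2k(\Delta-1)$ via Theorem~\ref{th1,1} gives
$$S_k(A_{\alpha}(\mathcal{L}(G))) \le \alpha\cdot 2k(\Delta-1) + (1-\alpha)\cdot 2(m-k) = 2\alpha k(\Delta-1) + 2(1-\alpha)(m-k),$$
as claimed.

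I do not expect a genuine obstacle: the whole argument is a routine splice of Fan's inequality with Lemma~\ref{le2,10} and the degree bound on $\mathcal{L}(G)$. The only places needing care are keeping the index range correct so that the relation $\lambda_i(Q(G)) = \lambda_i(A(\mathcal{L}(G)))+2$ is invoked only for $i\le\min\{n,m\}$ (and using $\lambda_i(A(\mathcal{L}(G)))=-2$ beyond that when $m>n$), and the short arithmetic simplification in the first part.
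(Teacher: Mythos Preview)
Your proof is correct and follows essentially the same route as the paper: decompose $A_{\alpha}(\mathcal{L}(G))=\alpha D(\mathcal{L}(G))+(1-\alpha)A(\mathcal{L}(G))$, apply Fan's inequality (Theorem~\ref{th1,1}), bound $S_k(D(\mathcal{L}(G)))$ via $d_{\mathcal{L}(G)}(w)=d_u+d_v-2\le 2\Delta-2$, and use Lemma~\ref{le2,10} to evaluate $S_k(A(\mathcal{L}(G)))$ in each range. The only difference is that you spell out the computation $\sum_{i=1}^{n}\lambda_i(A(\mathcal{L}(G)))=tr(Q(G))-2n=2m-2n$ a bit more explicitly than the paper does.
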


\begin{proof} If a vertex $w$ is in one-to-one correspondence with the edge $uv$ of the graph $G$, then $d_{\mathcal{L}(G)}(w)=d_G(u)+d_G(v)-2$. By Theorem \ref{th1,1} and Lemma \ref{le2,10}, we have
\begin{eqnarray*}
S_{k}(A_{\alpha}(\mathcal{L}(G))) & \leq & \alpha S_{k}(D(\mathcal{L}(G)))+(1-\alpha)S_{k}(A(\mathcal{L}(G)))\\
& \leq & \alpha k(2\Delta-2)+(1-\alpha)(S_{k}(Q(G))-2k)\\
& = & 2k(\alpha \Delta-1)+(1-\alpha)S_{k}(Q(G))
\end{eqnarray*}
for $1\leq k \leq s$, where $s=\min\{n,m\}$. If $m>n$, then we have
$$S_{k}(A_{\alpha}(\mathcal{L}(G)))\leq \alpha k(2\Delta-2)+(1-\alpha)(2m-2n-2(k-n))=2\alpha k(\Delta-1)+2(1-\alpha)(m-k)$$
for $n+1\leq k \leq m$. This completes the proof. $\Box$
\end{proof}

By Lemma \ref{le2,6} and Conjecture \ref{co1,2}, we have

\begin{corollary}\label{cor6,1} %------
If $T$ is a tree with $n$ vertices, then $S_{k}(A_{\alpha}(\mathcal{L}(T)))\leq 2k\alpha(\Delta-1)+(1-\alpha)(n-2)$ for $1\leq k \leq n-1$. If $U$ is a unicyclic graph with $n$ vertices, then $S_{k}(A_{\alpha}(\mathcal{L}(U)))\leq 2k(\alpha \Delta-1)+(1-\alpha)(n+\frac{k^2+k}{2})$ for $1\leq k \leq n$. If $B$ is a bicyclic graph with $n$ vertices, then $S_{k}(A_{\alpha}(\mathcal{L}(B)))\leq 2k(\alpha \Delta-1)+(1-\alpha)(n+1+\frac{k^2+k}{2})$ for $1\leq k \leq n$.
\end{corollary}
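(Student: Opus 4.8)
The plan is to specialise Theorem~\ref{th6,2} to each of the three families and replace the term $S_k(Q(G))$ appearing there by an upper bound tailored to the family in question.

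First, for a tree $T$ on $n$ vertices we have $m=n-1$, so $s=\min\{n,m\}=n-1$, and the first part of Theorem~\ref{th6,2} applies for every $k$ in the claimed range $1\le k\le n-1$. Since $T$ is bipartite, $Q(T)$ and $L(T)$ are cospectral (Proposition~2.5 of \cite{CRS}, already invoked in the proof of Theorem~\ref{th4,2}), so Lemma~\ref{le2,6} gives $S_k(Q(T))=S_k(L(T))\le n+2k-2$. Substituting this into Theorem~\ref{th6,2} yields $S_k(A_\alpha(\mathcal{L}(T)))\le 2k(\alpha\Delta-1)+(1-\alpha)(n+2k-2)$, and expanding shows the right-hand side equals $2k\alpha(\Delta-1)+(1-\alpha)(n-2)$: the $-2k$ produced by $2k(\alpha\Delta-1)$ cancels the $+2k(1-\alpha)$ coming from $(1-\alpha)\cdot 2k$ up to the $-2k\alpha$ term, which is precisely what turns $2k\alpha\Delta$ into $2k\alpha(\Delta-1)$, while $(1-\alpha)n-2(1-\alpha)=(1-\alpha)(n-2)$. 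This is a one-line algebraic identity with no hidden difficulty.

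Next, for a unicyclic graph $U$ we have $m=n$, hence $s=n$, and for a bicyclic graph $B$ we have $m=n+1$, hence still $s=\min\{n,n+1\}=n$; in both cases the first part of Theorem~\ref{th6,2} covers the full claimed range $1\le k\le n$. Here I would invoke the bound $S_k(Q(G))\le e(G)+\binom{k+1}{2}$ of Conjecture~\ref{co1,1}, which, as recalled in the introduction, has been established for unicyclic and bicyclic graphs in \cite{YY}. This gives $S_k(Q(U))\le n+\binom{k+1}{2}=n+\frac{k^2+k}{2}$ and $S_k(Q(B))\le (n+1)+\binom{k+1}{2}=n+1+\frac{k^2+k}{2}$. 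Plugging these into Theorem~\ref{th6,2} reproduces the two stated inequalities verbatim, with no further manipulation.

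There is essentially no obstacle: all the substance has been front-loaded into Theorem~\ref{th6,2}, Lemma~\ref{le2,6}, and the verified special cases of Conjecture~\ref{co1,1}. The only points deserving a moment of attention are (i) checking that $s=\min\{n,m\}$ equals $n-1$, $n$, and $n$ for trees, unicyclic graphs, and bicyclic graphs respectively, so that the first (rather than the second) bullet of Theorem~\ref{th6,2} is the relevant one throughout the claimed range of $k$, and (ii) the short algebraic simplification in the tree case.
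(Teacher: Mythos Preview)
Your proposal is correct and follows essentially the same route as the paper: the paper simply writes ``By Lemma~\ref{le2,6} and Conjecture~\ref{co1,2}, we have'' before stating the corollary, which is exactly the specialisation of Theorem~\ref{th6,2} via the tree bound on $S_k(Q(T))$ and the unicyclic/bicyclic cases of the signless Laplacian conjecture. Your citation of Conjecture~\ref{co1,1} together with the verified cases in \cite{YY} is in fact more precise than the paper's reference to Conjecture~\ref{co1,2}, and your checks on the value of $s=\min\{n,m\}$ and the tree-case algebra fill in details the paper leaves implicit.
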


\begin{theorem}\label{th6,3} %------
Let $G$ be a $K_3$-free and $C_4$-free graph with $n$ vertices and $m$ edges. If $0\leq \alpha \leq 1$, then
$$S_{k}(A_{\alpha}(G^2))\leq \alpha(Z_1(G)-(n-k)\delta^2(G))+(1-\alpha)\left(2m-\frac{1}{n-k}S_{k}^2(A(G))-(k-1)a(G)\right).$$
\end{theorem}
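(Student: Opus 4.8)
The plan is to split $A_\alpha(G^2)$ into three matrices, bound the sum of the $k$ largest eigenvalues of each, and recombine with Fan's inequality (Theorem~\ref{th1,1}). Since $G$ is $K_3$-free and $C_4$-free, Lemma~\ref{le2,11} gives $A(G^2)=A^2(G)-L(G)$, hence $A_\alpha(G^2)=\alpha D(G^2)+(1-\alpha)A^2(G)-(1-\alpha)L(G)$, and Theorem~\ref{th1,1} (applied twice) yields
$$S_k(A_\alpha(G^2))\le \alpha S_k(D(G^2))+(1-\alpha)S_k(A^2(G))+(1-\alpha)S_k(-L(G)).$$
It then suffices to estimate the three summands.

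For $D(G^2)$, the key observation is that under the $K_3$-free and $C_4$-free hypothesis every vertex satisfies $d_{G^2}(v)=\sum_{u\sim v}d_G(u)$: the $d_v$ neighbours of $v$ lie at distance $1$, and through each $u\sim v$ one picks up exactly the $d_u-1$ remaining neighbours of $u$, all at distance $2$, these sets being pairwise disjoint and disjoint from $N(v)$ precisely because $G$ has no triangle and no $4$-cycle. Thus $d_{G^2}(v)\ge d_v\delta\ge\delta^2$, while $\mathrm{tr}\,D(G^2)=\sum_v\sum_{u\sim v}d_G(u)=\sum_u d_u^2=Z_1(G)$, and since $S_k$ of a diagonal matrix equals its trace minus the sum of its $n-k$ smallest diagonal entries, $S_k(D(G^2))\le Z_1(G)-(n-k)\delta^2(G)$.

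For $-L(G)$, the eigenvalues in decreasing order are $0$ (from the null Laplacian eigenvalue) followed by $-a(G),-\lambda_{n-2}(L(G)),\dots$, so $S_k(-L(G))$ equals the negative of the sum of the $2$nd through $k$th smallest Laplacian eigenvalues of $G$, each of which is at least $a(G)$, giving $S_k(-L(G))\le-(k-1)a(G)$. For $A^2(G)$, which is positive semidefinite with eigenvalues $\lambda_i^2(A(G))$ and trace $2m$, one uses $\mathrm{tr}\,A(G)=0$ to get $\lambda_{k+1}(A(G))+\cdots+\lambda_n(A(G))=-S_k(A(G))$, hence by the Cauchy--Schwarz inequality $\lambda_{k+1}^2(A(G))+\cdots+\lambda_n^2(A(G))\ge\frac1{n-k}S_k^2(A(G))$, so that $S_k(A^2(G))\le 2m-\frac1{n-k}S_k^2(A(G))$. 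Plugging the three estimates into the displayed inequality and collecting the terms multiplied by $\alpha$ and by $1-\alpha$ produces exactly the asserted bound.

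The main obstacle is this last estimate: identifying $S_k(A^2(G))$ with $2m-\big(\lambda_{k+1}^2(A(G))+\cdots+\lambda_n^2(A(G))\big)$ requires the $k$ largest eigenvalues of $A^2(G)$ to be $\lambda_1^2(A(G)),\dots,\lambda_k^2(A(G))$, which is not automatic, since a large $|\lambda_n(A(G))|$ may outrank a small $\lambda_k(A(G))$; this must be handled (using $\lambda_1(A(G))\ge|\lambda_i(A(G))|$ together with a split of the spectrum into its positive and negative parts) or the resulting slack absorbed when recombining. The only other point needing care is the disjointness step in the computation of $d_{G^2}(v)$, which is exactly where both forbidden subgraphs $K_3$ and $C_4$ enter; everything else is bookkeeping.
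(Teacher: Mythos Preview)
Your approach coincides with the paper's: decompose $A_\alpha(G^2)=\alpha D(G^2)+(1-\alpha)\bigl(A^2(G)-L(G)\bigr)$ via Lemma~\ref{le2,11}, apply Fan's inequality (Theorem~\ref{th1,1}), and estimate $S_k(D(G^2))$, $S_k(A^2(G))$ and $S_k(-L(G))$ exactly as you describe. The paper is terser on the $D(G^2)$ piece (it only records $\sum_{u\in V(G^2)}d_u=Z_1(G)$ and jumps to the bound), so your justification that $d_{G^2}(v)=\sum_{u\sim v}d_G(u)\ge\delta^2$ under the $K_3$-free, $C_4$-free hypothesis is a welcome addition.

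The obstacle you flag in the $A^2(G)$ estimate is genuine, and the paper does not address it: it simply writes $S_k(A^2(G))=\lambda_1^2(A(G))+\cdots+\lambda_k^2(A(G))$ as an equality and then applies Cauchy--Schwarz, which is precisely the identification you question. In fact the resulting intermediate bound $S_k(A^2(G))\le 2m-\tfrac{1}{n-k}S_k^2(A(G))$ can fail outright. Take $G=P_5$ (certainly $K_3$-free and $C_4$-free) and $k=2$: the adjacency eigenvalues are $\pm\sqrt3,\pm1,0$, so the eigenvalues of $A^2(P_5)$ are $3,3,1,1,0$ and $S_2(A^2(P_5))=6$, whereas $2m-\tfrac{1}{3}S_2^2(A(P_5))=8-\tfrac{(\sqrt3+1)^2}{3}=8-\tfrac{4+2\sqrt3}{3}\approx 5.51<6$. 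So the step is not just unproved but false, and neither a positive/negative split of the spectrum nor ``absorbing slack'' can rescue that particular inequality. The statement of the theorem may still be true (for $P_5$, $k=2$, $\alpha=0$ one checks $S_2(A(P_5^2))\approx 3.55$ against a right-hand side of about $5.13$, so there is room in the $D(G^2)$ and $-L(G)$ terms), but neither your argument nor the paper's actually establishes it.
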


\begin{proof} Since $\sum\limits_{i=1}^{n}\lambda_i(A(G))=0$ and $\sum\limits_{i=1}^{n}\lambda_i^2(A(G))=2m$, by the Cauchy-Schwarz inequality, we have
\begin{eqnarray*}
S_{k}(A^2(G)) & = & \lambda_1^2(A(G))+\lambda_2^2(A(G))+\cdots+\lambda_k^2(A(G))\\
& = & 2m-\sum\limits_{i=k+1}^{n}\lambda_i^2(A(G))\\
& \leq & 2m-\frac{1}{n-k}\left(\sum\limits_{i=k+1}^{n}\lambda_i(A(G))\right)^2\\
& = & 2m-\frac{1}{n-k}\left(\sum\limits_{i=1}^{k}\lambda_i(A(G))\right)^2\\
& = & 2m-\frac{1}{n-k}S_{k}^2(A(G)).
\end{eqnarray*}
Since $\sum\limits_{u \in V(G^2)}d_u=Z_1(G)$, by Theorem \ref{th1,1} and Lemma \ref{le2,11}, we have
\begin{eqnarray*}
S_{k}(A_{\alpha}(G^2)) & \leq & \alpha S_{k}(D(G^2))+(1-\alpha)S_{k}(A(G^2))\\
& \leq & \alpha S_{k}(D(G^2))+(1-\alpha)(S_{k}(A^2(G))+S_{k}(-L(G)))\\
& \leq & \alpha(Z_1(G)-(n-k)\delta^2(G))\\
& & +(1-\alpha)\left(2m-\frac{1}{n-k}S_{k}^2(A(G))-(k-1)a(G)\right).
\end{eqnarray*}
This completes the proof. $\Box$

\end{proof}

\begin{theorem}\label{th6,4} %------
Let $G$ be a graph with $n$ vertices. If $0\leq \alpha \leq 1$, then
$$S_{k}(A_{\alpha}(\mathcal{D}(G)))\leq \begin{dcases}
4\sum\limits_{i=1}^{k/2}d_i(G)+2(1-\alpha)S_k(A(G)), & \text{if}\,\, 1 < k< n \,\,\text{is} \,\,\text{even};\\
4\sum\limits_{i=1}^{(k-1)/2}d_i(G)+2d_{(k+1)/2}+2(1-\alpha)S_k(A(G)), & \text{if}\,\,1 \leq k< n \,\, \text{is} \,\,\text{odd};\\
4\sum\limits_{i=1}^{k/2}d_i(G), & \text{if}\,\, n \leq k\leq 2n \,\,\text{is} \,\,\text{even};\\
4\sum\limits_{i=1}^{(k-1)/2}d_i(G)+2d_{(k+1)/2}, & \text{if}\,\,n \leq k\leq 2n \,\, \text{is} \,\,\text{odd},
\end{dcases}$$
where $d_1(G)\geq d_2(G)\geq \cdots \geq d_n(G)$ is the vertex degree sequence of $G$.
\end{theorem}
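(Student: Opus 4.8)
The plan is to deduce the entire statement from the single comparison $S_{k}(A_{\alpha}(\mathcal{D}(G)))\le S_{k}(D(\mathcal{D}(G)))$, since the right-hand side $S_{k}(D(\mathcal{D}(G)))$ turns out to equal the degree expressions listed in the theorem (and the extra summand $2(1-\alpha)S_{k}(A(G))$ occurring in the first two cases is nonnegative, so it may be freely added). First I would record two structural facts about $\mathcal{D}(G)$. Because $\mathcal{D}(G)$ is built from two copies of $G$ by adding, for each $v\in V(G)$, all edges joining $v$ in one copy to $N_{G}(v)$ in the other, every vertex of $\mathcal{D}(G)$ has degree exactly $2d_{G}(v)$; hence, writing $d_{1}\ge d_{2}\ge\cdots\ge d_{n}$ for the degree sequence of $G$, the matrix $D(\mathcal{D}(G))$ is diagonal with entries $2d_{1},2d_{1},2d_{2},2d_{2},\ldots,2d_{n},2d_{n}$, so that
$$
S_{k}(D(\mathcal{D}(G)))=
\begin{dcases}
4\sum_{i=1}^{k/2}d_{i}, & k\text{ even},\\
4\sum_{i=1}^{(k-1)/2}d_{i}+2d_{(k+1)/2}, & k\text{ odd},
\end{dcases}
$$
for every $1\le k\le 2n$. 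This matches the right-hand side of the theorem in its last two cases, and differs from it in the first two cases only by the nonnegative term $2(1-\alpha)S_{k}(A(G))$.

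For the comparison, the main point is that for an arbitrary graph $H$ and any $\alpha\le 1$ one has $A_{\alpha}(H)\preceq D(H)$ in the Loewner order: indeed
$$D(H)-A_{\alpha}(H)=(1-\alpha)\bigl(D(H)-A(H)\bigr)=(1-\alpha)L(H)\succeq 0,$$
because the Laplacian quadratic form equals $\sum_{uv\in E(H)}(x_{u}-x_{v})^{2}\ge 0$ and $1-\alpha\ge 0$. By the Courant--Fischer minimax characterization, $\lambda_{i}(A_{\alpha}(H))\le\lambda_{i}(D(H))$ for every $i$, hence $S_{k}(A_{\alpha}(H))\le S_{k}(D(H))$ for every $k$. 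Applying this with $H=\mathcal{D}(G)$ and using the previous paragraph, together with $S_{k}(A(G))\ge 0$ (the sum of the $k$ largest eigenvalues of a traceless symmetric matrix is nonnegative), completes the proof.

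If one prefers a proof producing the stated bound literally --- and closer to the Fan-inequality arguments used elsewhere in the paper --- one may instead write $A_{\alpha}(\mathcal{D}(G))=\alpha D(\mathcal{D}(G))+(1-\alpha)A(\mathcal{D}(G))$ and apply Theorem \ref{th1,1} to get $S_{k}(A_{\alpha}(\mathcal{D}(G)))\le \alpha S_{k}(D(\mathcal{D}(G)))+(1-\alpha)S_{k}(A(\mathcal{D}(G)))$. Now $A(\mathcal{D}(G))=J_{2}\otimes A(G)$, where $J_{2}$ is the $2\times 2$ all-ones matrix (nonzero eigenvalue $2$), so the spectrum of $A(\mathcal{D}(G))$ consists of $2\lambda_{1}(A(G)),\ldots,2\lambda_{n}(A(G))$ together with $n$ zeros; hence $S_{k}(A(\mathcal{D}(G)))\le 2\max\{S_{p}(A(G)):0\le p\le\min(k,n)\}$ (the $k$ largest eigenvalues being, for the optimal $p$, the $p$ largest of $\{2\lambda_{i}(A(G))\}$ together with $k-p$ copies of $0$). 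One then controls this maximum by means of $S_{p}(A(G))\le\sum_{i=1}^{p}d_{i}$ (again a consequence of $A(G)\preceq D(G)$), the bound $S_{k}(A(G))\ge 0$, and the elementary fact that, since the degree sequence is nonincreasing and $\min(k,n)\le k$,
$$\sum_{i=1}^{\min(k,n)}d_{i}\le 2\sum_{i=1}^{\lfloor k/2\rfloor}d_{i}\ \ (k\text{ even}),\qquad \sum_{i=1}^{\min(k,n)}d_{i}\le 2\sum_{i=1}^{(k-1)/2}d_{i}+d_{(k+1)/2}\ \ (k\text{ odd}).$$
Feeding these into the Fan estimate and using $0\le\alpha\le 1$ collapses everything exactly to the four displayed cases.

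There is no deep obstacle here; the only care needed is keeping the even/odd split and the two ranges $k<n$ and $n\le k\le 2n$ straight. The one genuine trap --- relevant only in the Fan-based route --- is that $S_{k}(A(\mathcal{D}(G)))$ is in general strictly larger than $2S_{k}(A(G))$ (for instance $G=K_{n}$, $k=2$: $S_{2}(A(\mathcal{D}(K_{n})))=2(n-1)>2(n-2)=2S_{2}(A(K_{n}))$), so one must not replace $S_{k}(A(\mathcal{D}(G)))$ by $2S_{k}(A(G))$ but instead bound it through the maximum over $p\le\min(k,n)$ as above. The short Loewner-order argument of the second paragraph sidesteps this entirely, and I would take it as the main proof.
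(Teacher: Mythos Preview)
Your Loewner-order argument is correct and in fact proves a uniformly stronger statement than the theorem: from $D(H)-A_{\alpha}(H)=(1-\alpha)L(H)\succeq 0$ you obtain $S_{k}(A_{\alpha}(\mathcal{D}(G)))\le S_{k}(D(\mathcal{D}(G)))$ for \emph{every} $k$, which already equals the right-hand side in cases three and four and is dominated by the right-hand side in cases one and two (your observation that $S_{k}(A(G))\ge 0$ for a traceless symmetric matrix, by concavity of $k\mapsto S_{k}$ with $S_{0}=S_{n}=0$, is correct). This is a genuinely different route from the paper's.

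The paper instead applies Fan's inequality to the decomposition $A_{\alpha}(\mathcal{D}(G))=\alpha D(\mathcal{D}(G))+(1-\alpha)A(\mathcal{D}(G))$ and then asserts that $\alpha S_{k}(D(\mathcal{D}(G)))+(1-\alpha)S_{k}(A(\mathcal{D}(G)))$ is \emph{equal} to the four displayed expressions. That equality is false in general: already for $1<k<n$ even it would force $S_{k}(A(\mathcal{D}(G)))=4\sum_{i=1}^{k/2}d_{i}+2S_{k}(A(G))$, which fails for $G=K_{3}$, $k=2$ (the left side is $4$, the right side is $10$). The trap you flagged --- that $S_{k}(A(\mathcal{D}(G)))$ is not simply $2S_{k}(A(G))$ because of the $n$ extra zero eigenvalues --- is exactly the step the paper glosses over; your $K_{n}$ example is apt. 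So your main proof is not only shorter but also repairs a genuine gap in the paper's argument, and your secondary Fan-based route shows how to salvage that approach correctly by bounding $\max_{p\le\min(k,n)}S_{p}(A(G))$ through $S_{p}(A(G))\le\sum_{i=1}^{p}d_{i}$ rather than pretending the optimal $p$ is always $k$.

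One cosmetic remark: in the first two cases the term $2(1-\alpha)S_{k}(A(G))$ is pure slack in your bound, so you might say so explicitly; the theorem as printed is weaker than what you actually prove.
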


\begin{proof} By the definition of $\mathcal{D}(G)$, the $A_{\alpha}$-matrix of the double graph of $G$ is
\begin{eqnarray*}
A_{\alpha}(\mathcal{D}(G)) & = & \alpha D(\mathcal{D}(G))+(1-\alpha)A(\mathcal{D}(G))\\
& = & \alpha \begin{pmatrix}
2 &  0 \\
0 &  2
\end{pmatrix}\otimes D(G)+(1-\alpha)\begin{pmatrix}
1 &  1 \\
1 &  1
\end{pmatrix}\otimes A(G),
\end{eqnarray*}
where $M\otimes N$ is the Kronecker product (or tensor product) of $M$ and $N$. Thus the spectrum of $\begin{pmatrix}
2 &  0 \\
0 &  2
\end{pmatrix}\otimes D(G)$  and $\begin{pmatrix}
1 &  1 \\
1 &  1
\end{pmatrix}\otimes A(G)$ are
$$ 2d_1(G),  2d_1(G),  2d_2(G), 2d_2(G), \ldots, 2d_n(G), 2d_n(G)$$
and
$$2\lambda_1(A), 2\lambda_2(A), \ldots, 2\lambda_n(A), 0, 0, \ldots, 0, $$
respectively. By Theorem \ref{th1,1}, we have
\begin{eqnarray*}
S_k(A_{\alpha}(\mathcal{D}(G))) & \leq & \alpha S_k(D(\mathcal{D}(G)))+(1-\alpha)S_k(A(\mathcal{D}(G)))\\
& = & \begin{dcases}
4\sum\limits_{i=1}^{k/2}d_i(G)+2(1-\alpha)S_k(A(G)), & \text{if}\,\, 1 < k< n \,\,\text{is} \,\,\text{even};\\
4\sum\limits_{i=1}^{(k-1)/2}d_i(G)+2d_{(k+1)/2}+2(1-\alpha)S_k(A(G)), & \text{if}\,\,1 \leq k< n \,\, \text{is} \,\,\text{odd};\\
4\sum\limits_{i=1}^{k/2}d_i(G), & \text{if}\,\, n \leq k\leq 2n \,\,\text{is} \,\,\text{even};\\
4\sum\limits_{i=1}^{(k-1)/2}d_i(G)+2d_{(k+1)/2}, & \text{if}\,\,n \leq k\leq 2n \,\, \text{is} \,\,\text{odd}.
\end{dcases}
\end{eqnarray*}
This completes the proof. $\Box$
\end{proof}

\small {

}

\end{document}